\definecolor{mathgreen}{HTML}{009b77}
\newcommand{\ie}{{\sfcode`\.1000 i.e.}}    		
\newcommand{\eg}{{\sfcode`\.1000 e.g.}}    		
\theoremstyle{plain}
\newtheorem{theorem}{Theorem}[section]			
\newtheorem*{theorem*}{Theorem}
\newtheorem{proposition}[theorem]{Proposition}
\newtheorem{corollary}[theorem]{Corollary}
\newtheorem{lemma}[theorem]{Lemma}
\theoremstyle{definition}
\newtheorem{definition}[theorem]{Definition}
\newtheorem{remark}[theorem]{Remark}
\newtheorem{construction}[theorem]{Construction}
\newtheorem{sect}[theorem]{} 
\numberwithin{equation}{section}
\title{A Bloch--Ogus Theorem for henselian local rings in mixed characteristic}
\author{Johannes Schmidt}
\address[Johannes Schmidt]{\newline Mathematisches Institut, Universit\"at Heidelberg, 69120 Heidelberg, Germany}
\email{jschmidt@mathi.uni-heidelberg.de}
\author{Florian Strunk}
\address[Florian Strunk]{\newline Fakult\"at f\"ur Mathematik, Universit\"at Regensburg, 93040 Regensburg, Germany}
\email{florian.strunk@ur.de}
\renewcommand{\AA}{\mathbb{A}}
\newcommand{\NN}{\mathbb{N}}
\newcommand{\OO}{\mathcal{O}}
\newcommand{\PP}{\mathbb{P}}
\newcommand{\ZZ}{\mathbb{Z}}
\newcommand{\I}{\mathcal{I}}
\newcommand{\Sm}{\mathrm{S}\mathrm{m}}
\newcommand{\mot}{\mathrm{mot}}
\newcommand{\bur}[1]{\underaccent{\bar}{#1}}
\newcommand{\id}{\mathrm{id}}
\newcommand{\Nis}{\mathrm{Nis}}
\renewcommand{\setminus}{\smallsetminus}
\let\hom=\relax
\DeclareMathOperator{\hom}{Hom}
\DeclareMathOperator{\hhom}{\underline{\hom}}
\DeclareMathOperator{\Spec}{Spec}
\DeclareMathOperator{\Spt}{\mathrm{Spt}}
\DeclareMathOperator{\SH}{{\mathcal{S}\mathcal{H}}}
\let\lim=\relax
\DeclareMathOperator*{\lim}{lim}
\let\dim=\relax
\DeclareMathOperator*{\im}{im}
\DeclareMathOperator*{\dim}{dim}
\DeclareMathOperator*{\codim}{codim}
\DeclareMathOperator*{\colim}{colim}
\thanks{The authors are supported by the SFB/CRC 1085 \emph{Higher Invariants} (Regensburg) funded by the DFG and the DFG-Forschergruppe 1920 
\emph{Symmetrie, Geometrie und Arithmetik} (Heidelberg--Darmstadt)}
\begin{document}

\begin{abstract}
We show a conditional exactness statement for the Nisnevich Gersten complex associated to an $\AA^1$-invariant cohomology theory with Nisnevich descent for smooth schemes over a Dedekind ring with only infinite residue fields.
As an application we derive a Nisnevich analogue of the Bloch--Ogus theorem for \'etale cohomology over a henselian discrete valuation ring with infinite residue field.
\end{abstract}

\maketitle


\section{Introduction}

Given an $\AA^1$-invariant cohomology theory $E$ for smooth varieties $X$ over a field $k$ with Nisnevich descent, Colliot-Th{\'e}l{\`e}ne, Hoobler and Kahn proved in~\cite{CTHK97} the exactness of the associated Gersten complex
\begin{multline}\label{eqn:introgc}
0\to H^n(Y, E) \to \bigoplus_{z\in Y^{(0)}} H^n_{\overline{\{z\}}}(Y, E) \to \bigoplus_{z\in X^{(1)}} H^{n+1}_{\overline{\{z\}}}(Y, E) \to\cdots\\
\cdots\to \bigoplus_{z\in X^{(d-1)}} H^{n+d-1}_{\overline{\{z\}}}(Y, E)  \to \bigoplus_{z\in X^{(d)}} H^{n+d}_{\overline{\{z\}}}(Y, E) \to 0,
\end{multline}
where $Y=\Spec(\mathcal{O}_{X,x})$ is the local scheme at a point $x$ and $d$ is the dimension of $X$.
The main ingredient of their proof is a geometric presentation theorem~\cite[Theorem~3.1.1]{CTHK97} for a closed immersion $Z\hookrightarrow X$ which is due to Gabber.
If $E$ is algebraic $K$-theory, this result implies the Gersten conjecture for smooth schemes over a field, originally proved by Quillen \cite[Theorem~5.11]{Quillengersten}.
Taking $E$ as \'etale cohomology with constant torsion coefficients defined over $k$, one obtains the Bloch--Ogus theorem \cite{BlochOgus}.

\medskip

In the mixed characteristic case of a discrete valuation ring with infinite residue field, an analogue of Gabber's geometric presentation theorem for a closed immersion $Z\hookrightarrow X$ was shown in~\cite[Theorem 2.1]{SS16}.
However, there are two crucial differences to the equal characteristic case:
Firstly, one has to require that the closed subscheme $Z$ does not contain any irreducible component of the special fibre of $X$.
Secondly, the presentation is not Zariski- but only Nisnevich-local in $X$.

\medskip

In this paper, our goal is to adopt the techniques of Colliot-Th{\'e}l{\`e}ne, Hoobler and Kahn to the mixed characteristic case using the more restricted version of the presentation theorem.
Our main result is the following (see~Theorem~\ref{thm: Bloch-Ogus abstract nonsense}, below).

\begin{theorem*}
Let $S$ be a Dedekind scheme with only infinite residue fields and $E$ an $\AA^1$-invariant cohomology theory for smooth schemes of finite type over $S$ with Nisnevich descent. Let $X/S$ be such a smooth scheme of dimension $d$, $x\in X$ a point and $Y=\Spec(\mathcal{O}_{X,x}^h)$ the Henselian local scheme at $x$.
\begin{enumerate}
 \item The Gersten complex \eqref{eqn:introgc} is exact possible except at the first and third (non-trivial) spot.
 \item If for each point $x$ of $X$ the forget support map for the special fibre $Y_\sigma$
 \[
  \mathbb{R}\Gamma_{Y_\sigma}(Y_\Nis,E) \to \Gamma(Y,E)
 \]
is trivial,
then the Gersten complex is exact everywhere.
\end{enumerate}
\end{theorem*}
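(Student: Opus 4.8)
The plan is to identify the Gersten complex \eqref{eqn:introgc} with the $E_1$-page, together with its $d_1$-differential, of the coniveau spectral sequence $E_1^{p,q}=\bigoplus_{z\in Y^{(p)}}H^{p+q}_z(Y,E)\Rightarrow H^{p+q}(Y,E)$ attached to $Y=\Spec(\OO_{X,x}^h)$; equivalently, one assembles the associated Cousin complex of Nisnevich sheaves on $X$ and takes stalks at $x$. If $x$ lies over the generic point of $S$ there is no special fibre and the assertion is the equicharacteristic case of \cite{CTHK97}, so we may assume $x$ lies over a closed point of $S$. A formal manipulation of the spectral sequence then reduces part~(1) to the vanishing $E_2^{p,q}(Y)=0$ for all $p\geq 2$: granting it, the edge map $H^n(Y,E)\to E_1^{0,n}$ is automatically surjective onto $\ker d_1$ (exactness at the second spot), while its kernel $F^1H^n(Y,E)$, which under this vanishing equals $E_2^{1,n-1}$, is the sole obstruction to exactness at the first and third spots; part~(2) then reduces to the additional vanishing $E_2^{1,q}(Y)=0$ for all $q$.

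First I would place $E$ into the axiomatic framework of \cite{CTHK97}: an $\AA^1$-invariant cohomology theory with Nisnevich descent on smooth $S$-schemes defines a cohomology theory with supports satisfying their localization, Nisnevich-excision and homotopy-invariance axioms, so that the formal core of that paper is at our disposal. The one geometric input of \cite{CTHK97} — Gabber's presentation theorem \cite[Theorem~3.1.1]{CTHK97} — is to be replaced throughout by its mixed-characteristic analogue \cite[Theorem~2.1]{SS16}; re-running the effacement argument with this substitute yields effacement for every pair consisting of a smooth $S$-scheme $X'$ and a closed subscheme $Z$ that contains no irreducible component of a special fibre of $X'/S$. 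That \cite[Theorem~2.1]{SS16} is only Nisnevich- rather than Zariski-local is immaterial here, since we work with Nisnevich sheaves and evaluate at the Nisnevich stalk $\OO_{X,x}^h$.

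Now comes the point at which the special fibre intervenes. Apart from the generic point of $Y$ (of codimension $0$), the only point $z$ of $Y$ for which $\overline{\{z\}}$ contains the special fibre $Y_\sigma$ is the generic point $\sigma$ of $Y_\sigma$ — which is integral since $Y$ is Henselian local and $X/S$ is smooth — and it lies in codimension $1$; every codimension-$p$ support with $p\geq 2$, and every codimension-$1$ support coming from a point of the generic fibre of $Y/S$, meets $Y_\sigma$ in dimension strictly below $\dim Y_\sigma$ and is therefore covered by the effacement just established. Since the exactness of the Cousin sheaf complex at its $p$-th term is proved in \cite{CTHK97} by applying the presentation theorem only to the closed subsets occurring there as supports — closures of codimension-$p$ points — we obtain exactness of \eqref{eqn:introgc} at $\bigoplus_{z\in Y^{(p)}}$ for every $p\geq 2$, that is $E_2^{p,q}(Y)=0$ for $p\geq 2$, and hence, by the spectral-sequence bookkeeping above, also at $\bigoplus_{z\in Y^{(0)}}$. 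This is part~(1).

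For part~(2) a single missing instance of effacement remains, namely for the codimension-$1$ support $\overline{\{\sigma\}}=Y_\sigma$; equivalently, one must lift to codimension $0$ the $\sigma$-component of a $d_1$-cocycle in $\bigoplus_{z\in Y^{(1)}}$. The localization triangle for $Y_\sigma\hookrightarrow Y$ shows that this obstruction is measured by the forget-support map $\mathbb{R}\Gamma_{Y_\sigma}(Y_\Nis,E)\to\Gamma(Y,E)$; and the Henselian discrete valuation rings occurring at the codimension-$1$ points of $Y$ — for which the Gersten complex itself reduces to the vanishing of the analogous forget-support map, hence is covered by the hypothesis — enter the same computation, which is why the hypothesis is imposed at every point of $X$. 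When all of these maps are trivial the obstruction vanishes, so $E_2^{1,q}(Y)=0$, and combined with part~(1) this makes \eqref{eqn:introgc} exact everywhere. I expect the genuine difficulty to lie precisely here: isolating the $\sigma$-summand from the generic-fibre summands inside a single $d_1$-cocycle — whose residues may simultaneously be supported at codimension-$2$ points of $Y$ lying on $Y_\sigma$ — and checking that this interaction is still governed by the forget-support map, which should require unwinding the reduction steps of \cite{CTHK97} rather than quoting them as a black box.
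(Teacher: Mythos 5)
Your part~(1) is essentially the paper's proof recast in spectral-sequence language: reduce stalk-wise to a Henselian DVR base, invoke the effaceability result coming from \cite[Theorem~2.1]{SS16} (Proposition~\ref{prop: effaceability}), and observe that the only closed supports that can contain a component of the special fibre of $Y$ have codimension $0$ or $1$, so effaceability in codimension $\geq 2$ is unconditional. Your bookkeeping identifying the two obstructed spots with $E_2^{1,q}(Y)$ matches Proposition~\ref{proposition:gerstencomplex}. So far the match is close enough that I would call it the same proof.

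For part~(2), however, you stop exactly where the real work is. You correctly identify the remaining issue --- effaceability for codimension-$1$ supports $Z$ that contain $Y_\sigma$, possibly properly --- and then write ``I expect the genuine difficulty to lie precisely here $\dots$ which should require unwinding the reduction steps of \cite{CTHK97} rather than quoting them as a black box.'' That is an honest flag, but it leaves a genuine gap: the hypothesis in the theorem is only the triviality of the forget-support map for $Z = X_\sigma$, whereas Proposition~\ref{proposition:gerstencomplex}(1) requires triviality of $E^n_{Z/X}(X_x^h)\to E^n_X(X_x^h)$ for \emph{every} closed $Z$ of codimension $\geq 1$, in particular for $Z$ strictly larger than $X_\sigma$. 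The paper closes this gap with a short retraction argument (in the proof of Theorem~\ref{thm: Bloch-Ogus abstract nonsense}): decompose $Z = X_\sigma \cup Z_2$ with $X_\sigma \nsubseteq Z_2$, note that $X\setminus X_\sigma = X_\eta$ is the generic fibre and $X\setminus Z = (X\setminus Z_2)_\eta$, produce a retraction $r_1$ of $E_X(X)\to E_X(X_\eta)$ from the hypothesis, produce a retraction $r_2$ of $E_X(X_\eta)\to E_X(X\setminus Z)$ from effaceability over the generic fibre (Corollary~\ref{cor: effaceability on generic fibre}, which is the point where \cite[Theorem~2.1]{SS16} is used once more on the base-changed square), and compose: $r_1\circ r_2$ is a retraction of $E_X(X)\to E_X(X\setminus Z)$, so the forget-support map for $Z$ is trivial. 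This decomposition-plus-retraction step is the content you would need to supply to complete part~(2); without it the ``isolating the $\sigma$-summand from the generic-fibre summands'' worry you raise is not actually addressed. Also, your aside about ``Henselian discrete valuation rings occurring at the codimension-$1$ points of $Y$'' being why the hypothesis ranges over all of $X$ is a red herring: the hypothesis is imposed at every point of $X$ simply because the proof establishes exactness of a complex of sheaves on $X_\Nis$ by checking it on all stalks, and the conclusion for $Y=X_x^h$ is then the stalk at $x$.
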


In presence of the second condition, one obtains the usual resolution of the Nisnevich sheafification of the cohomology given by $E$ by flabby Nisnevich sheaves.
If $E$ is algebraic $K$-theory, the theorem was known before, see~\cite{BlochGersten} and \cite{GilletLevine}.

\medskip

As an application of the theorem above, we derive the following analogue of the Bloch--Ogus theorem in mixed characteristic (see~Corollary~\ref{cor: Bloch Ogus for etale cohomology of Nisnevich local schemes}).

\begin{theorem*}
Let $Y=\Spec(\mathcal{O}_{X,x}^h)$ be a Henselian local scheme of a $d$-dimensional smooth scheme $X$ of finite type over a Henselian discrete valuation ring $\mathfrak{o}$ with infinite residue field of characteristic $p$.
Let $K$ be a locally constant constructible sheaf of $\ZZ/m$-modules for $m$ prime to $p$ on the small \'etale site of $\Spec(\mathfrak{o})$.
Then the Gersten complex
\[
 0 \to {\rm H}^n(Y_{{\rm et}},K) \to \bigoplus_{z \in Y^{(0)}} {\rm H}^n(k(z),K)\to \cdots\to \bigoplus_{z \in Y^{(d)}} {\rm H}^{n-d}(k(z),K(-d)) \to 0 .
\]
is exact. Here $H^i(k(z),-)$ denotes the Galois-cohomology of the field $k(z)$.
\end{theorem*}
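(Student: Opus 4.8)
The plan is to deduce this corollary from the abstract Gersten exactness theorem (Theorem~\ref{thm: Bloch-Ogus abstract nonsense}) by taking $E$ to be étale cohomology with coefficients in $K$. First I would verify that $E = \mathbb{R}\Gamma_{\mathrm{et}}(-,K)$ is an $\AA^1$-invariant cohomology theory with Nisnevich descent on smooth schemes of finite type over $\mathfrak{o}$: Nisnevich descent is immediate since étale cohomology satisfies étale (hence Nisnevich) descent, while $\AA^1$-invariance for torsion coefficients prime to the residue characteristics holds over any regular base by the smooth base change / homotopy invariance theorems in SGA~4 (one uses here that $m$ is prime to $p$ and that the residue field of $\mathfrak{o}$ has characteristic $p$, so $m$ is invertible on $X$). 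This lets me apply part~(1) of the abstract theorem to get exactness of the Gersten complex except possibly at the first and third non-trivial spots, and then I must verify the hypothesis of part~(2), namely that the forget-support map $\mathbb{R}\Gamma_{Y_\sigma}(Y_{\Nis},E)\to\Gamma(Y,E)$ is trivial, to upgrade to exactness everywhere.

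The key geometric input for the vanishing of the forget-support map is absolute purity (the Gabber–Thomason purity theorem): for the regular closed immersion of the special fibre $Y_\sigma\hookrightarrow Y$ of codimension $1$, one has $\mathbb{R}\Gamma_{Y_\sigma}(Y_{\mathrm{et}},K)\simeq \mathbb{R}\Gamma(Y_\sigma{}_{,\mathrm{et}},K(-1))[-2]$. Since $Y=\Spec(\mathcal{O}_{X,x}^h)$ is henselian local and $Y_\sigma$ is the henselian local scheme of the special fibre $X_\sigma$ at $x$, the étale cohomology of $Y_\sigma$ is that of its closed point, i.e.\ Galois cohomology of the residue field $k(x)$; in particular $\mathbb{R}\Gamma_{Y_\sigma}(Y_{\mathrm{et}},K)$ is concentrated in degrees $\geq 2$, whereas one checks the forget-support map factors through a term that is forced to vanish. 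More precisely, I would argue via the localization triangle relating $\mathbb{R}\Gamma_{Y_\sigma}(Y_{\mathrm{et}},K)$, $\mathbb{R}\Gamma(Y_{\mathrm{et}},K)$, and $\mathbb{R}\Gamma(U_{\mathrm{et}},K)$ where $U=Y\setminus Y_\sigma$ is the generic fibre, using that $Y$ henselian local makes $\mathbb{R}\Gamma(Y_{\mathrm{et}},K)=\mathbb{R}\Gamma(\mathrm{Spec}\,k(x),K)$ Galois cohomology of the residue field and that the restriction map to $U$ is split injective (via the specialization/cospecialization for the henselian trait, or directly because $K$ is locally constant and the inertia acts through a finite quotient of the absolutely-unramified-type situation). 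Granting this, the forget-support map is trivial on the nose.

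Once part~(2) applies, the Gersten complex for $E$ is exact. It then remains to identify the terms: the group $H^{n+j}_{\overline{\{z\}}}(Y,E)$ for a point $z\in Y^{(j)}$ is, by absolute purity applied to the regular local scheme $\mathcal{O}_{Y,z}$ (every localization of a smooth $\mathfrak{o}$-scheme is regular) and a colimit over étale neighbourhoods, isomorphic to $H^{n+j-2j}(k(z),K(-j)) = H^{n-j}(k(z),K(-j))$, the Galois cohomology of the residue field $k(z)$ with the appropriately Tate-twisted coefficients. Matching this against the indexing in~\eqref{eqn:introgc} (the $E$-cohomology in degree $n$ contributes $H^n$ at codimension $0$ and $H^{n-d}(k(z),K(-d))$ at codimension $d$) gives exactly the displayed complex, finishing the proof.

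I expect the main obstacle to be the verification that the forget-support map for $Y_\sigma$ is trivial — this is where the special structure of the henselian trait base really enters, and it is not a formality: one must use that $\mathcal{O}_\mathfrak{o}$ is henselian (not merely that $\mathcal{O}_{X,x}$ is henselianized) so that the generic-fibre restriction $\mathbb{R}\Gamma(Y_{\mathrm{et}},K)\to\mathbb{R}\Gamma(U_{\mathrm{et}},K)$ is injective in all degrees, which kills the connecting map out of $\mathbb{R}\Gamma_{Y_\sigma}$. The coefficient hypotheses ($K$ locally constant constructible, $m$ prime to $p$) are exactly what make absolute purity and the injectivity available; everything else is bookkeeping with the localization sequence and the purity isomorphism.
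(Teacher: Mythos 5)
Your overall strategy matches the paper's: reduce to Theorem~\ref{thm: Bloch-Ogus abstract nonsense} by taking $E$ to be (a spectrum model of) $\mathbb{R}\varepsilon_* K$, check $\AA^1$-invariance via smooth base change, check the forget-support hypothesis for the special fibre, then identify the graded pieces by absolute purity. The $\AA^1$-invariance check and the identification of the terms are fine. The genuine gap is in the key step you correctly single out as the hard part: the triviality of the forget-support map $\mathbb{R}\Gamma_{Y_\sigma}(Y_{\mathrm{et}},K) \to \mathbb{R}\Gamma(Y_{\mathrm{et}},K)$.

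Neither of your two arguments for this step is a proof. The degree argument does not work: by purity and the henselian property, $\mathbb{R}\Gamma_{Y_\sigma}(Y_{\mathrm{et}},K)$ is indeed concentrated in degrees $\geq 2$, but the target $\mathbb{R}\Gamma(Y_{\mathrm{et}},K)\simeq \mathbb{R}\Gamma(k(x)_{\mathrm{et}},K)$ is concentrated in degrees $\geq 0$, so there is no a priori reason for a map between them to vanish --- the map sits in $\mathrm{Hom}_{\mathcal{D}}(\Lambda[-2],\mathbb{R}\Gamma(k(x),K)) \cong {\rm H}^2(k(x),K)$, which is typically non-zero. The fallback to ``split injectivity of restriction via specialization/cospecialization'' or ``inertia acts through a finite quotient'' is circular: split injectivity of $\mathbb{R}\Gamma(Y_{\mathrm{et}},K)\to\mathbb{R}\Gamma(Y_{\eta,\mathrm{et}},K)$ is \emph{equivalent} to the vanishing of the forget-support map, not an independent input. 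In the Hochschild--Serre picture the edge map (inflation) $H^p(G_{\mathbb{F}},K)\to H^p(G_k,K)$ is not formally injective in degrees $\geq 2$, because the $d_2$ differential $E_2^{p-2,1}\to E_2^{p,0}$ can be non-zero; killing it requires exactly the non-formal input you are trying to avoid.

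The paper's argument (Lemma~\ref{lem: effaceability special fibre} and Corollary~\ref{cor: effaceability special fibre}) supplies that input concretely. Using the easy purity statement of Lemma~\ref{lem: absolute purity} (codimension-one smooth special fibre), one twists to reduce to $\Lambda(1)$-coefficients and identifies $x^*(\sigma_{X,*}\mathbb{R}\sigma_X^!\Lambda(1)\to\Lambda(1))$ with a class in ${\rm H}^2(X_{\mathrm{et}},\Lambda(1))$, which turns out to be the \'etale first Chern class $\hat{c}_1[\mathcal{O}(X_\sigma)]$. Since $X$ is local, $\mathrm{Pic}(X)=0$, so this class and hence the whole map vanish in the derived category; the general l.c.c.~case follows by the projection formula. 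That Picard-vanishing step is the real content, and your write-up leaves it unstated.

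\end{document}
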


We remark that in \cite{Geisser04} Geisser derived the exactness of the above Gersten complex from the Bloch--Kato-Conjecture even for a (Zariski) local scheme $Y=\Spec(\mathcal{O}_{X,x})$ but only for coefficients $K = \boldsymbol{\mu}_{m}^{\otimes r}$ where $n \leq r$.
Our method of proof is more elementary, at least if the residue and quotient field of $\mathfrak{o}$ are perfect (see Remark~\ref{remark:avoidpurity} and Remark~\ref{remark:geisser}).

\medskip

The organization of the paper is as follows. In Section~\ref{Section:preliminaries} we recall some known results on basechange of presheaves of spectra. We include a short reminder on elementary properties of the codimension for schemes not necessarily over a field. In Section~\ref{Section: coniveau filtration} we define the coniveau filtration for a spectrum with Nisnevich descent and show that the filtration quotients are flabby Nisnevich sheaves. In Section~\ref{Section: Gersten complex} the Nisnevich Gersten complex is introduced. Up to this point, the $\AA^1$-invariance property has not been used yet.
Section~\ref{section:effaceability} contains an effaceability result which makes use of $\AA^1$-invariance and the geometric presentation theorem. This leads to our main theorem.
The final Section~\ref{section:blochogus} deals with the application to \'etale cohomology with torsion coefficients.

\section{Preliminaries}\label{Section:preliminaries}

Let $S$ be a base scheme, which is always assumed to be noetherian and of finite dimension.
Let moreover $\Sm_S$ be the category of smooth schemes of finite type over $S$ and $\Spt_{S^1}(\Sm_S)$ the category of presheaves of spectra on $\Sm_S$.
For an object $X\in\Sm_S$ we define the category $\Spt_{S^1}(X_\Nis)$ analogously where $X_\Nis$ denotes the small Nisnevich site on $X$.

We consider the \emph{(stable) object-wise model structure} on $\Spt_{S^1}(\Sm_S)$.
Its homotopy category $\SH_{S^1}(\Sm_S)$ is a triangulated category with exact triangles given by the homotopy (co)fibre sequences.
The left Bousfield localization at the equivalences on Nisnevich stalks is called the \emph{(stable) Nisnevich local model structure} with a fibrant replacement functor $L_\Nis$.
Likewise, in the case of the small site $X_\Nis$, we define the (stable) Nisnevich local model structure on $\Spt_{S^1}(X_\Nis)$ analogously.
In the case of the big site $\Sm_S$, a further left Bousfield localization yields the \emph{(stable) $\AA^1$-Nisnevich local model structure} with a fibrant replacement functor $L_\mot$.
The details of this model structures play no essential role for this text and we refer to the preliminary section of \cite{SS16} for further explanation and references.
We are working in the non-localized model structure only and use the fibrant replacement functors to obtain statements about the localizations.
Hence, whenever we speak of an exact triangle or a homotopy cofibre, we mean the respective terms for the \emph{object-wise} model structure.

\subsection{Basechange}

A morphism $f\colon X\to Y$ of noetherian schemes of finite Krull dimension induces a covering preserving functor $\bar f\colon \Sm_Y\to\Sm_X$ by pullback.
Precomposition ${\bar f}_*$ with ${\bar f}$ is the right adjoint of a Quillen adjunction
\[
{\bar f\,}^*: \Spt_{S^1}(\Sm_Y)\rightleftarrows \Spt_{S^1}(\Sm_X): {\bar f}_*
\]
for each of the model structures from above (see again the preliminary section of \cite{SS16} for more details and references).
By abuse of notation, we write $f_*:={\bar f}_*$ and $f^*:={\bar f\,}^*$.
If the morphism $f$ is an object of $\Sm_Y$ itself, there is an adjunction
\[
 {\bur{f}}: \Sm_X\rightleftarrows\Sm_Y : {\bar f}
\]
with the left adjoint given by post-composition.
Again, precomposition with ${\bur{f}}$ is the right adjoint of a Quillen adjunction
\[
{\bur{f}}^*: \Spt_{S^1}(\Sm_X)\rightleftarrows \Spt_{S^1}(\Sm_Y): {\bur{f}}_*
\]
for each of the model structures.
We clearly have ${\bur{f}}_*={\bar f\,}^*=f^*$ and set $f_\sharp:={\bur{f}}^*$.

Likewise, for a morphism $f\colon X\to Y$, we obtain Quillen adjunctions
\begin{equation}\label{adjunctiononthesmallsites}
\Spt_{S^1}(X_\Nis) \underset{f^*}{\overset{f_\sharp}{\rightleftarrows}} \Spt_{S^1}(Y_\Nis) \underset{f_*}{\overset{f^*}{\rightleftarrows}} \Spt_{S^1}(X_\Nis)
\end{equation}
for the object-wise and the Nisnevich local model structure where again for the first one we have to assume that $f$ is an object of $Y_\Nis$ whereas the second always exists.
%
%

For an object $X\to Y$ in $\Sm_Y$, there is a canonical covering preserving inclusion functor $\delta_{X/Y}\colon X_{\Nis}\to\Sm_Y$.
Precomposition with this functor yields the right adjoint of a Quillen adjunction
\begin{equation}\label{restrictionadjunction}
\delta_{X/Y}^* : \Spt_{S^1}(X_{\Nis}) \rightleftarrows \Spt_{S^1}(\Sm_Y) : \delta_{X/Y,*}
\end{equation}
for the object-wise and the Nisnevich local model structures.
The inclusion functor $\delta_{X/Y}$ factorizes as
\[
 X_{\Nis}\xrightarrow{\delta_{X}} \Sm_X \xrightarrow{\bur{f}} \Sm_Y
\]
(where we set $\delta_{X}:=\delta_{X/X}$) and the adjunction~\eqref{restrictionadjunction} factorizes as
\[
\Spt_{S^1}(X_{\Nis}) \underset{\delta_{X,*}}{\overset{\delta_X^*}{\rightleftarrows}} \Spt_{S^1}(\Sm_X) \underset{f^*}{\overset{f_\sharp}{\rightleftarrows}} \Spt_{S^1}(\Sm_Y). 
\]
If moreover $X$ is an object of $Y_\Nis$, there is a commutative diagram
\begin{equation}\label{commutativediagramofsites}
\begin{tikzcd}
 X_{\Nis} \arrow[d, "{\bur{f}}"] \arrow[r, "\delta_{X}"] \arrow[dr, "\delta_{X/Y}" description] & \Sm_X \arrow[d, "{\bur{f}}"] \\
 Y_{\Nis} \arrow[r, "\delta_{Y}"] & \Sm_Y
\end{tikzcd}
\end{equation}
inducing the diagramm
\[
\begin{tikzcd}
 \Spt_{S^1}(X_{\Nis}) \arrow[d, "f_\sharp"', shift right] \arrow[r, "\delta_{X}^*", shift left] & \Spt_{S^1}(\Sm_X) \arrow[d, "f_\sharp"', shift right]\arrow[l, "\delta_{X,*}", shift left] \\
 \Spt_{S^1}(Y_{\Nis}) \arrow[u, "f^*"', shift right] \arrow[r, "\delta_{Y}^*", shift left] & \Spt_{S^1}(\Sm_Y)\arrow[l, "\delta_{Y,*}", shift left]\arrow[u, "f^*"', shift right]
\end{tikzcd}
\]
of Quillen adjunctions with diagonal \eqref{restrictionadjunction}.
In particular, for an \'etale morphism $f\colon X\to Y$ the restriction $\delta_*$ to the respective small sites commutes with $f^*$.
In particular, these observations imply the following lemma.

\begin{lemma}\label{lem: restrictions to small sites vs restrictions in small sites}
Let $X\in\Sm_S$ and $g\colon \tilde{X} \to X$ \'{e}tale.
Then $g^\ast \delta_{X/S,\ast} \cong \delta_{\tilde{X}/S,\ast}$.
In particular, the unit of the adjunction $g^\ast \dashv g_\ast$ induces a canonical map 
\[
\delta_{X/S,\ast}E \to g_\ast \delta_{\tilde{X}/S,\ast}E. 
\]
for $E\in\Spt_{S^1}(\Sm_S)$.
\end{lemma}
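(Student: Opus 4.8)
The plan is to unwind everything through the factorization $\delta_{X/S}=\bur{f}\circ\delta_X$ of the inclusion of sites, where $f\colon X\to S$ is the structure morphism, so that the claim reduces to the compatibility of the restriction functors $\delta_\ast$ with \'etale basechange on the small sites recorded just above.

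First I would observe that $g$, being \'etale and of finite type, is an object of $X_\Nis$, so the commutative square of sites \eqref{commutativediagramofsites} applies with $(X,Y,f)$ replaced by $(\tilde X,X,g)$. Passing to right adjoints in the induced square of Quillen adjunctions --- equivalently, noting that $\delta_{\tilde X,\ast}\circ g^\ast$ and $g^\ast\circ\delta_{X,\ast}$ are both just precomposition with the one functor of sites $\delta_X\circ\bur g=\bur g\circ\delta_{\tilde X}\colon\tilde X_\Nis\to\Sm_X$ --- gives the identity $g^\ast\circ\delta_{X,\ast}=\delta_{\tilde X,\ast}\circ g^\ast$ of functors $\Spt_{S^1}(\Sm_X)\to\Spt_{S^1}(\tilde X_\Nis)$, where $g^\ast\circ\delta_{X,\ast}$ uses the restriction functor $g^\ast$ of the small sites and $\delta_{\tilde X,\ast}\circ g^\ast$ the one of the big sites. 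Then I would use the factorization of the restriction adjunction \eqref{restrictionadjunction}, which yields $\delta_{X/S,\ast}=\delta_{X,\ast}\circ f^\ast$ and, applied to $\tilde X$ with structure morphism $\tilde f=f\circ g$, also $\delta_{\tilde X/S,\ast}=\delta_{\tilde X,\ast}\circ\tilde f^\ast$; and since $(-)^\ast$ on the big sites is precomposition with the post-composition functors $\bur{(-)}$ on the $\Sm$-categories, which compose, one has $\tilde f^\ast=g^\ast\circ f^\ast$. Chaining these identities,
\[
g^\ast\,\delta_{X/S,\ast}=g^\ast\,\delta_{X,\ast}\,f^\ast=\delta_{\tilde X,\ast}\,g^\ast\,f^\ast=\delta_{\tilde X,\ast}\,\tilde f^\ast=\delta_{\tilde X/S,\ast},
\]
which establishes the first assertion --- in fact as a strict equality, although only a canonical isomorphism is claimed. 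For the second assertion I would apply the unit $\eta\colon\id\to g_\ast g^\ast$ of the adjunction $g^\ast\dashv g_\ast$ to $\delta_{X/S,\ast}E$ and post-compose with $g_\ast$ applied to the identification above.

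I expect no serious obstacle: the argument is entirely formal, taking place at the level of (strictly) commuting diagrams of morphisms of sites and their induced Quillen adjunctions. The only thing requiring genuine care is the bookkeeping --- keeping apart the several functors all denoted $g^\ast$ or $f^\ast$ according as they refer to the big or the small sites, and checking that the identifications invoked are the canonical ones, so that the map produced in the final assertion is canonical and independent of choices.
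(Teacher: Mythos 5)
Your proof is correct and is essentially the argument the paper has in mind: the paper offers no separate proof, merely noting that the factorization $\delta_{X/S,\ast}=\delta_{X,\ast}\circ f^\ast$ together with the commutativity of diagram~\eqref{commutativediagramofsites} (in your case with $(\tilde X,X,g)$ in place of $(X,Y,f)$) implies the lemma, and you have simply spelled out the resulting chain of identities. Your remark that the isomorphism is in fact a strict equality of precomposition functors is accurate, given the implicit finite-type hypothesis on $g$ that makes $\delta_{\tilde X/S,\ast}$ well defined.
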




\begin{remark}
Let $f\colon X\to Y$ be any morphism between noetherian schemes of finite Krull dimension.
A diagram analogous to~\eqref{commutativediagramofsites} with $\bar f$ in place of $\bur f$ shows that the restriction $\delta_*$ to the respective small sites commutes with $f_*$.
\end{remark}

\begin{definition}\label{defi:homotopygroups}
Let $E\in\Spt_{S^1}(\Sm_S)$ or $E\in\Spt_{S^1}(X_\Nis)$ be a spectrum. For $n\in\ZZ$, one sets $E^n(X):=\pi_{-n}(E(X))$.
\end{definition}

\begin{lemma}\label{basechangeonhomotopygroups}
Let $E\in\Spt_{S^1}(X_\Nis)$ be a spectrum, $z\in X$ a point and consider the canonical morphism $\mathfrak{z}\colon \Spec(\OO_{X,z})\to X$.
Then the canonical morphism
\[
\mathfrak{z}_*\mathfrak{z}^*\pi_n(E) \to \pi_n(\mathfrak{z}_*\mathfrak{z}^* E)
\]
is an isomorphism of presheaves on $X_\Nis$ for every $n\in\ZZ$.
\end{lemma}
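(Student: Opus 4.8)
The plan is to compute the composite functor $\mathfrak{z}_*\mathfrak{z}^*$ explicitly on sections and then invoke that homotopy groups of spectra commute with filtered colimits. By the description of the adjunction~\eqref{adjunctiononthesmallsites} recalled above, $\mathfrak{z}_*$ is precomposition with the small-site pullback functor $\overline{\mathfrak{z}}\colon X_\Nis\to(\Spec\OO_{X,z})_\Nis$, so that $(\mathfrak{z}_*G)(V)=G(V_z)$ for $V\to X$ in $X_\Nis$, where $V_z:=V\times_X\Spec(\OO_{X,z})$; and $\mathfrak{z}^*$ is the object-wise (that is, non-sheafified) left Kan extension along $\overline{\mathfrak{z}}$. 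The key step I would isolate is the natural identification
\[
(\mathfrak{z}_*\mathfrak{z}^*F)(V)\;\cong\;\colim_{z\in U\subseteq X}F(V\times_X U),
\]
valid for an arbitrary presheaf $F$ on $X_\Nis$ --- whether of spectra or of abelian groups --- where $U$ ranges over the (cofiltered) system of open neighbourhoods of $z$ in $X$, so that the colimit is filtered. Granting this, the lemma follows formally, as explained below.

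To prove the displayed identification I would unwind the left Kan extension: $(\mathfrak{z}_*\mathfrak{z}^*F)(V)=(\mathfrak{z}^*F)(V_z)$ is the colimit of $(W,\alpha)\mapsto F(W)$ over the comma category of \'etale $X$-schemes $W$ equipped with an $X$-morphism $\alpha\colon V_z\to W$. Choosing an affine open $z\in\Spec A\subseteq X$ with corresponding prime $\mathfrak{p}$, one has $\OO_{X,z}=\colim_{f\in A\setminus\mathfrak{p}}A_f$, so that $V_z=\lim_f V\times_X D(f)$ is a cofiltered limit with \emph{affine} transition morphisms and (noetherian, hence) quasi-compact quasi-separated terms, and the opens $D(f)$ are cofinal among open neighbourhoods of $z$. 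By the standard limit theorem for morphisms out of such a cofiltered limit into a scheme locally of finite presentation, $\hom_X(V_z,W)=\colim_f\hom_X(V\times_X D(f),W)$ for every \'etale $W\to X$. This is precisely the assertion that the functor $U\mapsto(V\times_X U,\id)$ from the system of open neighbourhoods of $z$ into the comma category above is cofinal, whence the displayed colimit formula; naturality in $F$ and $V$ is visible from the construction.

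Granting the formula, I would apply it with $F=E$, take $\pi_n$ of both sides, and use that filtered colimits of spectra are homotopy colimits and that $\pi_n$ commutes with filtered homotopy colimits:
\[
\pi_n\bigl((\mathfrak{z}_*\mathfrak{z}^*E)(V)\bigr)\;\cong\;\pi_n\Bigl(\colim_{z\in U\subseteq X}E(V\times_X U)\Bigr)\;\cong\;\colim_{z\in U\subseteq X}\pi_n\bigl(E(V\times_X U)\bigr).
\]
Applying the same formula instead to the presheaf of abelian groups $F=\pi_n E$ gives $(\mathfrak{z}_*\mathfrak{z}^*\pi_n E)(V)\cong\colim_{z\in U\subseteq X}\pi_n(E(V\times_X U))$ as well. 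Under these identifications the canonical comparison map $\mathfrak{z}_*\mathfrak{z}^*\pi_n(E)\to\pi_n(\mathfrak{z}_*\mathfrak{z}^*E)$, evaluated at $V$, becomes the tautological map $\colim_U\pi_n(E(V\times_X U))\to\pi_n(\colim_U E(V\times_X U))$; hence it is an isomorphism, naturally in $V$, which is what we wanted.

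The formal parts --- that $\mathfrak{z}_*$ is evaluation at $V_z$, that $\mathfrak{z}^*$ is a levelwise left Kan extension, and that $\pi_n$ commutes with filtered colimits of spectra --- present no difficulty. The one genuine point, and the place I expect to have to be careful, is the limit argument of the second paragraph: one must pass to the affine opens $D(f)$ around $z$ so that the pro-system computing $\Spec(\OO_{X,z})$ has affine transition morphisms and the limit theorem for $\hom$-sets applies, thereby yielding the cofinality on which everything rests.
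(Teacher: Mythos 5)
Your argument is correct and rests on the same core idea as the paper's proof: describe $\mathfrak{z}_*\mathfrak{z}^*$ section-wise as a filtered colimit over shrinking affine neighbourhoods of $z$, and then commute $\pi_n$ past that filtered colimit. The route to the colimit description is, however, noticeably different in packaging. The paper first treats the case of a single object $f\in X_\Nis$, proving $f_*f^*\pi_n(E)\cong\pi_n(f_*f^*E)$ by an internal mapping-space argument, and then invokes \cite[Lemma~1.5]{SS16} to obtain the natural isomorphism $\mathfrak{z}_*\mathfrak{z}^*\cong\colim_i d_{i,*}d_i^*$ over the affine Zariski neighbourhoods $d_i\colon D_i\to X$. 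You instead derive the explicit section-wise formula $(\mathfrak{z}_*\mathfrak{z}^*F)(V)\cong\colim_{z\in U}F(V\times_X U)$ directly from the pointwise left Kan extension description of $\mathfrak{z}^*$, together with the standard limit theorem for morphisms from a cofiltered limit of qcqs schemes with affine transitions into a scheme of finite presentation, using the cofinality of the affine opens $D(f)$ among neighbourhoods of $z$. Your identification of cofinality with the limit formula for $\hom$-sets is correct (nonemptiness and connectedness of the comma categories amount exactly to the statement that the filtered colimit of $\hom$-sets maps isomorphically onto $\hom_X(V_z,W)$), and the formula holds equally for presheaves of abelian groups and of spectra since the colimit is 1-categorical and filtered. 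This makes your proof self-contained (no appeal to \cite{SS16}) and renders the paper's internal-hom step superfluous, since with the explicit formula in hand evaluation on a fixed object manifestly commutes with $\pi_n$. Both routes are correct and yield the same lemma; yours is arguably cleaner bookkeeping at the cost of invoking the EGA-style limit theorem explicitly.
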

\begin{proof}
For any morphism $f$, we have an isomorphism $f_*\pi_n(E)\cong \pi_n(f_*E)$ as $\pi_n$ and $f_*$ are defined object-wise.
Suppose for a moment that $\mathfrak{z}$ were an object $f\colon U\to X$ of the site $X_{\Nis}$.
In this case $f^*(F)\cong F\times_X U$.
Since homotopy presheaves and pullbacks of presheaves are calculated object-wise, we obtain
\[
f^*\pi_0(E)\cong \pi_0(E)\times_X U \cong \pi_0(E\times_X U) \cong \pi_0(f^*E).
\]
We may write $\pi_n(F)$ alternatively as $\pi_0(\hhom(S^n,F))$, where $\hhom$ denotes the internal mapping space of preshaves.
For this internal mapping space, there are isomorphisms
\[
f_*(f^*\hhom(S^n,E)) \xrightarrow{\cong} f_*(\hhom(f^*(S^n),f^*(E))) \xrightarrow{\cong} \hhom(S^n,f_*f^*(E))
\]
where for the first we used that $f$ was assumed to be in $X_{\Nis}$.
Alltogether, we have
\[
\begin{array}{rcl}
f_*f^*\pi_n(E) &\cong & f_*f^*\pi_0\hhom(S^n,E)\\
               &\cong & \pi_0f_*f^*\hhom(S^n,E)\\
               &\cong & \pi_0 \hhom(S^n,f_*f^*(E))\\
               &\cong & \pi_n(f_*f^* E)
\end{array}
\]
in the case of $f$ being an object of the site $X_{\Nis}$.

For the case of the essentially open immersion $\mathfrak{z}\colon \Spec(\OO_{X,z})\to X$ of the lemma, we write $\mathfrak{z}$ as the cofiltered limit of the diagram $D_{\scaleobj{0.7}{(-)}}\colon\I\to X_{\Nis}$ given by the affine Zariski neighbourhoods of $z$ in $X$.
Then by the proof of \cite[Lemma~1.5]{SS16} (and after choosing a cofinal subdiagram) one has a canonical natural isomorphism
\[
\mathfrak{z}_*\mathfrak{z}^*(F) \cong \colim_{i\in\I} d_{i,*}d_i^*(F)
\]
where $d_i\colon D_i\to X$ is the structural morphism which is an open immersion.
The result now follows from the case handled above and from the fact that homotopy groups commute with filtered colimits.
\end{proof}

\subsection{Codimension}

In this subsection, we recall basic notations on the codimension for the convenience of the reader.

\smallskip

Let $X$ be a scheme and $Z\subseteq X$ an irreducible closed subset.
Define
\[
 \codim(Z,X) :=\operatorname{sup}\{s\in\NN_{\infty}\mid Z = Z_s\subsetneq \ldots \subsetneq Z_0\subseteq X\text{ with $Z_i\subseteq X$ irred. cl.}\}.
\]
For an arbitrary closed subset $Z\subseteq X$ we set 
\[
\codim(Z,X):=\operatorname{inf}\{\codim(Z',X)\mid \text{$Z'\subseteq Z$ is an irreducible component}\},
\]
where by convention $\codim(\emptyset,X)=\infty$, as the \emph{codimension of $Z$ in $X$}.
One has
\[
 \codim(Z,X) = \underset{z\in Z}{\operatorname{inf}}\dim(\OO_{X,z}).
\]
If $Z$ is irreducible closed with generic point $\eta_Z$, then $\codim(Z,X) = \dim(\OO_{X,\eta_Z})$.
Recall that a scheme $X$ is called \emph{catenary}, if for every two irreducible closed subsets $Z\subseteq Z'\subseteq X$ every maximal chain $Z=Z_s\subsetneq \ldots\subsetneq Z_0=Z'$ of irreducible closed subsets has the same finite length.
Examples of such are schemes (locally) of finite type over a field or over a one dimensional noetherian domain, \eg, a discrete valuation ring.

We have the following two easy lemmas.
\begin{lemma}\label{lemma:codimensionneq}
Let $X$ be a scheme and $Z\subseteq Z'\subseteq X$ two irreducible closed subsets with $\codim(Z',X)=s$.
Then
\[
Z\neq Z' ~\Leftrightarrow~ \codim(Z,X)\geq s+1.
\]
\end{lemma}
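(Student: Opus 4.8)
The plan is to unwind both implications directly from the chain-theoretic definition of codimension, the only mechanism needed being the observation that a strictly increasing chain of irreducible closed subsets with bottom term $Z'$ can be lengthened by prepending $Z$ whenever $Z\subsetneq Z'$. Throughout one uses the standing hypothesis that $X$ is noetherian of finite Krull dimension, so that both $\codim(Z,X)$ and $\codim(Z',X)$ are finite (here $Z$ and $Z'$ are nonempty, being irreducible closed subsets), which is what makes the equivalence meaningful.

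For ``$\Rightarrow$'', I would assume $Z\neq Z'$, hence $Z\subsetneq Z'$ in view of $Z\subseteq Z'$. Since $\codim(Z',X)=s$, there is a chain $Z'=Z_s\subsetneq Z_{s-1}\subsetneq\cdots\subsetneq Z_0\subseteq X$ of irreducible closed subsets. Prepending $Z$ yields the chain $Z\subsetneq Z'=Z_s\subsetneq\cdots\subsetneq Z_0\subseteq X$ of length $s+1$ with bottom term $Z$, so $\codim(Z,X)\geq s+1$ by the definition of codimension.

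For ``$\Leftarrow$'', I would argue by contraposition: if $Z=Z'$, then $\codim(Z,X)=\codim(Z',X)=s$, which contradicts $\codim(Z,X)\geq s+1$; hence $Z\neq Z'$. There is essentially no obstacle in this lemma; the single point that deserves a word of care is that the backward implication genuinely uses $s<s+1$, i.e.\ the finiteness of $s$, and this is precisely what the noetherian finite-dimensionality assumption on $X$ provides.
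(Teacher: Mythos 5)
Your proof is correct, and it is the standard chain-prepending argument; the paper states this lemma without proof (it is offered as one of ``two easy lemmas''), so there is nothing to compare against. One small point of precision: what makes the backward implication work is not the noetherian or finite-dimensionality hypothesis on $X$ per se, but simply that the lemma's notation $\codim(Z',X)=s$ together with the appearance of $s+1$ tacitly takes $s$ to be a natural number rather than $\infty$ (the equivalence is false for $s=\infty$); of course, in the paper's running setting the noetherian finite-dimensional hypotheses do guarantee finiteness, so your remark is sound in context.
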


\begin{lemma}\label{lemma:codimensionneq2}
Let $X$ be a scheme and $Z_1,Z_2\subseteq X$ two closed subsets with both $\codim(Z_1,X)\geq s$ and $\codim(Z_2,X)\geq s$.
Then $\codim(Z_1\cup Z_2, X)\geq s$.
\end{lemma}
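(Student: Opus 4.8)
The plan is to reduce the assertion to the pointwise description of codimension recorded just above, namely $\codim(Z,X)=\inf_{z\in Z}\dim(\OO_{X,z})$. First I would observe that the underlying topological space of $Z_1\cup Z_2$ is the union of those of $Z_1$ and $Z_2$, so that taking the infimum of $\dim(\OO_{X,z})$ over $z\in Z_1\cup Z_2$ is the same as taking the minimum of the infimum over $z\in Z_1$ and the infimum over $z\in Z_2$. In other words
\[
\codim(Z_1\cup Z_2,X)=\min\bigl(\codim(Z_1,X),\,\codim(Z_2,X)\bigr),
\]
and the right-hand side is at least $s$ by hypothesis, which is exactly the claim.

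If one prefers to argue directly from the defining chain condition instead, the alternative is as follows: any irreducible component $W$ of $Z_1\cup Z_2$ satisfies $W=(W\cap Z_1)\cup(W\cap Z_2)$ with both pieces closed in $W$, so irreducibility of $W$ forces $W\subseteq Z_1$ or $W\subseteq Z_2$. In either case $W$ is an irreducible closed subset of some $Z_i$, hence contained in an irreducible component $W'$ of $Z_i$, and since codimension only grows under passing to a closed subset one gets $\codim(W,X)\geq\codim(W',X)\geq\codim(Z_i,X)\geq s$; taking the infimum over all components $W$ of $Z_1\cup Z_2$ yields the assertion. Either way there is essentially no obstacle here; the only point that deserves a word of justification is the elementary fact used in the second argument that an irreducible set contained in the union of two closed sets must lie in one of them.
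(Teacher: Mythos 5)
The paper states this as one of two ``easy lemmas'' and supplies no proof, so there is no written argument to compare against. Both of your arguments are correct: the first reduces immediately to the identity $\codim(Z,X)=\inf_{z\in Z}\dim(\OO_{X,z})$ recorded in the paper, noting that an infimum over $Z_1\cup Z_2$ is the minimum of the two infima; the second proceeds from the definition via irreducible components, using that an irreducible set contained in a union of two closed sets lies in one of them (which follows since $W=(W\cap Z_1)\cup(W\cap Z_2)$ would otherwise decompose $W$) together with the monotonicity of codimension under inclusion of irreducible closed subsets. Either route fully justifies the lemma; the first is the more streamlined given what the paper already sets up, while the second is closer to the bare definitions. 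No gaps.
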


\begin{lemma}\label{lemma:codimensionsadd}
Let $X$ be an irreducible catenary scheme and $Z\subseteq Z'\subseteq X$ two irreducible closed subsets.
Then
\[
\codim(Z,X) = \codim(Z, Z') + \codim(Z', X). 
\]
\end{lemma}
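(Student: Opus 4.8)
The plan is to reduce the claimed identity to the defining property of a catenary scheme, namely that saturated (i.e.\ non-refinable) chains of irreducible closed subsets between two fixed comparable ones have a common finite length.

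First I would record the following reformulation of codimension. For irreducible closed subsets $A\subseteq B$ of a catenary scheme, any chain $A=A_t\subsetneq\ldots\subsetneq A_0=B$ of irreducible closed subsets refines to a saturated one, all saturated chains from $A$ to $B$ have the same finite length $\ell$ by the catenary hypothesis, hence every chain has length $\le\ell$ and so $\codim(A,B)=\ell$. Here one uses that when $B$ is irreducible one may always assume the top term equals $B$ (enlarging the top of a chain only lengthens it, and $B$ is the unique maximal irreducible closed subset of itself), so the supremum in the definition of $\codim(A,B)$ is realized by saturated chains reaching $B$. I would also note that $Z'$, with its reduced subscheme structure, is again irreducible and catenary, and that its irreducible closed subsets are exactly the irreducible closed subsets of $X$ contained in $Z'$; in particular $\codim(Z,Z')$ may be computed by chains of irreducible closed subsets of $X$ lying between $Z$ and $Z'$.

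Next I would choose a saturated chain $Z=W_t\subsetneq\ldots\subsetneq W_0=Z'$ of length $t=\codim(Z,Z')$ and a saturated chain $Z'=V_u\subsetneq\ldots\subsetneq V_0=X$ of length $u=\codim(Z',X)$, and concatenate them at their common term $Z'$ to obtain the chain
\[
 Z=W_t\subsetneq\ldots\subsetneq W_0=Z'=V_u\subsetneq\ldots\subsetneq V_0=X
\]
from $Z$ to $X$ of length $t+u$. This concatenated chain is again saturated: an irreducible closed subset strictly between two consecutive terms would sit strictly between two consecutive terms either of the $W$-chain or of the $V$-chain (the shared term $Z'$ occurs only once), contradicting their saturatedness. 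Applying the reformulation above to $Z\subseteq X$, the length of this saturated chain equals $\codim(Z,X)$, which yields $\codim(Z,X)=\codim(Z,Z')+\codim(Z',X)$.

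The only real subtlety — and the step I would be most careful about — is the passage between the supremum-definition of codimension and the ``common length of saturated chains'' description: it uses irreducibility of $X$ and of $Z'$ to ensure the supremum is attained by chains reaching the top, and the catenary hypothesis to ensure this value equals the length of every saturated chain (and is in particular finite). Once this reformulation is established, the additivity is merely concatenation of chains, and the junction of the two chains requires no extra hypothesis.
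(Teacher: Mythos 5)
Your proof is correct. The paper states Lemma~\ref{lemma:codimensionsadd} without proof, grouping it among the ``easy lemmas'' of the preliminaries, so there is no argument in the paper to compare against; the route you take --- reformulate $\codim(A,B)$ for irreducible closed $A\subseteq B$ in a catenary scheme as the common length of saturated chains from $A$ up to $B$ (using irreducibility of $B$ to push the top of any chain up to $B$, and catenarity to get a well-defined finite length), observe that $Z'$ inherits these properties and that its irreducible closed subsets are those of $X$ contained in $Z'$, then concatenate saturated chains at $Z'$ and check the concatenation is again saturated --- is the standard and natural argument, and the verification of saturatedness of the spliced chain is exactly right. The only tacit assumption, which you rightly single out, is that an arbitrary chain refines to a saturated one; this needs some finiteness (e.g.\ $X$ noetherian), but that is automatic for the schemes of finite type over a noetherian Dedekind base to which the paper applies the lemma, and the paper's definition of catenary already builds in finiteness of saturated chain lengths.
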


For an integer $s\geq 0$, define
\[
 X^{(s)} = \{z\in X \mid \codim(\overline{\{z\}}, X)=s\}
\]
and say that \emph{$z$ has codimension $s$ in $X$} if $z\in X^{(s)}$.
Note, that $\overline{\{z\}}$ is always an irreducible closed subset of $X$ and $z$ is its generic point.
One has $X^{(s)}=\emptyset$ for $s>\dim(X)$ as $\dim(X)=\sup_{x\in X}\dim(\OO_{X,x})$.
We have $\codim(Z,X)=0$ if and only if $Z$ contains a whole irreducible component of $X$.
Hence a point $z\in X$ is a generic point of an irreducible component of $X$ if and only if $\dim(\OO_{X,z})=0$. Thus, $X^{(0)}$ is precisely the set of generic points of irreducible components of $X$.

\begin{remark}
Please note that the inequality
\[
 \dim(Z)+\codim(Z,X)\leq \dim(X)
\]
is not always an equality, even for catenary schemes.
\end{remark}

\section{The coniveau filtration}\label{Section: coniveau filtration}

\begin{sect}
In this section we define the coniveau filtration for a Nisnevich local fibrant spectrum on $\Sm_S$.
We fix a spectrum $E\in\Spt_{S^1}(\Sm_S)$.
\end{sect}

\begin{definition}\label{defi:E/Zdefi}
Let $X/S\in \Sm_S$, $Z\subseteq X$ a closed subset with complementary open immersion $j\colon (X\setminus Z)\hookrightarrow X$.
Let $E_X$ be the spectrum $\delta_{X/S,\ast}E$ in $\Spt_{S^1}(X_\Nis)$.
Lemma \ref{lem: restrictions to small sites vs restrictions in small sites} induces a morphism
\begin{equation}\label{canonicalmapeta}
 \eta_j\colon E_X\to j_*j^* E_X = j_\ast E_{X\setminus Z}
\end{equation}
in $\Spt_{S^1}(X_{\Nis})$ whose homotopy fibre, we denote by $E_{Z/X}\in \Spt_{S^1}(X_{\Nis})$.
\end{definition}

\begin{remark}
Note that $E_{X/X}=E_X$ and $E_{\emptyset/X}=*$ in $\Spt_{S^1}(X_{\Nis})$.
\end{remark}

\begin{remark}
Throughout Sections \ref{Section: coniveau filtration} and \ref{Section: Gersten complex}, we could as well work with arbitrary spectra $E_X \in\Spt_{S^1}(X_\Nis)$, not necessarily of the form $\delta_{X/S,\ast}E$ for a spectrum $E\in\Spt_{S^1}(\Sm_S)$.
In this case, $E_{\tilde{X}}$ will be \emph{defined} as $f^\ast E_X$ for an \'{e}tale morphism $f\colon \tilde{X} \to X$ of finite type.
\end{remark}

\begin{sect}\label{para: situation etale bc}
We fix an \'{e}tale morphism $f\colon \tilde{X}\to X$ of finite type and a closed subset $Z\subseteq X$.
Let $\tilde{Z}=Z\times_X \tilde{X}$ denote the pullback of $Z$ along $f$.
We get an induced morphism $\tilde{f}\colon (\tilde{X}\setminus \tilde{Z})\to (X\setminus Z)$ on the open complements.
By Lemma \ref{lem: restrictions to small sites vs restrictions in small sites}, $f^\ast E_X \cong E_{\tilde{X}}$ and we have a canonical morphism $E_X \to f_\ast E_{\tilde{X}}$.
\end{sect}

\begin{lemma}\label{easyidentification}
In the situation of \ref{para: situation etale bc}, we have $f^\ast E_{Z/X} \simeq E_{\tilde{Z}/\tilde{X}}$ in $\Spt_{S^1}(\tilde{X}_{\Nis})$. 
\end{lemma}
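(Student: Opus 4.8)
The plan is to exploit the fact that the defining homotopy fibre sequence for $E_{Z/X}$ lives in the stable model category $\Spt_{S^1}(X_\Nis)$ and that $f^\ast$ is (the derived functor of) the left Quillen functor in the adjunction~\eqref{adjunctiononthesmallsites}, hence exact on homotopy (co)fibre sequences. So the strategy is: apply $f^\ast$ to the triangle defining $E_{Z/X}$, identify each of the two other terms with the corresponding term for the pair $(\tilde Z,\tilde X)$, and conclude by the five lemma / the fact that a functor preserving cofibre sequences sends fibres to fibres.

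First I would recall that by Definition~\ref{defi:E/Zdefi} there is a homotopy fibre sequence
\[
 E_{Z/X}\longrightarrow E_X \xrightarrow{\ \eta_j\ } j_\ast E_{X\setminus Z}
\]
in $\Spt_{S^1}(X_\Nis)$, and similarly a sequence $E_{\tilde Z/\tilde X}\to E_{\tilde X}\xrightarrow{\eta_{\tilde\jmath}} \tilde\jmath_\ast E_{\tilde X\setminus\tilde Z}$ on $\tilde X_\Nis$. Since $f$ is \'etale of finite type, $f^\ast$ on the small sites is exact and preserves homotopy cofibre sequences, so it suffices to produce a compatible identification of the last two terms. For the middle term, $f^\ast E_X \simeq E_{\tilde X}$ is exactly Lemma~\ref{lem: restrictions to small sites vs restrictions in small sites} (equivalently \ref{para: situation etale bc}). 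For the right-hand term, I would use base change for the Cartesian square
\[
\begin{tikzcd}
 \tilde X\setminus\tilde Z \arrow[r,"\tilde\jmath"]\arrow[d,"\tilde f"'] & \tilde X \arrow[d,"f"]\\
 X\setminus Z \arrow[r,"j"] & X
\end{tikzcd}
\]
to get $f^\ast j_\ast \simeq \tilde\jmath_\ast \tilde f^{\,\ast}$, together with $\tilde f^{\,\ast} E_{X\setminus Z}\simeq E_{\tilde X\setminus\tilde Z}$ (again Lemma~\ref{lem: restrictions to small sites vs restrictions in small sites}, applied to the \'etale map $\tilde f$). One then checks that under these identifications $f^\ast(\eta_j)$ corresponds to $\eta_{\tilde\jmath}$; this is a diagram chase using that both maps are built from the same unit transformation and the naturality of~\eqref{canonicalmapeta}. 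Passing to homotopy fibres then gives $f^\ast E_{Z/X}\simeq E_{\tilde Z/\tilde X}$.

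The main obstacle is the smooth (or rather \'etale) base change isomorphism $f^\ast j_\ast \simeq \tilde\jmath_\ast \tilde f^{\,\ast}$ at the level of Nisnevich-local spectra on the small sites, and its compatibility with the unit maps. On the small \'etale sites this is essentially formal: $f^\ast$ is restriction along an \'etale morphism and hence exact and comonoidal in the relevant sense, and the commuting square of sites analogous to~\eqref{commutativediagramofsites} (with $j$, $\tilde\jmath$, $f$, $\tilde f$) yields the Beck--Chevalley isomorphism after derived functors; the preceding discussion around~\eqref{commutativediagramofsites} and Lemma~\ref{lem: restrictions to small sites vs restrictions in small sites} already packages exactly what is needed. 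The only genuine care required is to keep track of the fibrant replacements (we work in the non-localized model structure and use $L_\Nis$), so that ``$j_\ast$'' means the derived pushforward; but this is handled uniformly as in the cited preliminaries of \cite{SS16}.
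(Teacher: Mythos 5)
Your proposal is correct and follows essentially the same argument as the paper: apply the exact functor $f^\ast$ to the defining fibre sequence, use the base-change equivalence $f^\ast j_\ast \simeq \tilde\jmath_\ast \tilde f^{\,\ast}$ for the Cartesian square, and identify $\tilde f^{\,\ast} j^\ast \simeq \tilde\jmath^{\,\ast} f^\ast$ before passing to homotopy fibres. Your write-up is somewhat more verbose about compatibility of the unit maps, which the paper dispatches as ``easy,'' but the route and the key base-change input are identical.
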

\begin{proof}
Let $j\colon (X\setminus Z)\hookrightarrow X$ and $\tilde{j}\colon (\tilde{X}\setminus \tilde{Z})\hookrightarrow \tilde{X}$.
First, we apply the homotopy exact functor $f^*$ to the homotopy fibre sequence $E_{Z/X}\to E_X\to j_*j^* E_X$.
Base change (since $f\in X_\Nis$) for the pullback square
\[
\begin{tikzcd}
\tilde{X}\setminus \tilde{Z} \arrow[r, "\tilde{j}"] \arrow[d, "\tilde f"'] & \tilde{X}\arrow[d, "f"]\\
X\setminus Z   \arrow[r, "j"]                  & X
\end{tikzcd} 
\]
yields an equivalence $f^* j_* \simeq \tilde{j}_* \tilde{f}^*$ and it remains to observe that $\tilde{f}^* j^* \simeq (j \tilde {f})^*\simeq (f \tilde{j})^*\simeq 
\tilde{j}^* f^*$, which is easy.
\end{proof}

\begin{lemma}[Forget support map]\label{lemma:forgetsupport}
Let $Z$ and $Z'$ be closed subsets of $X$ with $Z\subseteq Z'\subseteq X$ and let $E_X\in\Spt_{S^1}(X_{\Nis})$ be a spectrum.
Then there is a canonical \emph{forget support map} $E_{Z/X}\to E_{Z'/X}$.
Further, this map sits in a canonical exact triangle
\[
E_{Z/X}\to E_{Z'/X}\to j_* E_{(Z'\setminus Z)/(X\setminus Z)}
\]
of objects from $\Spt_{S^1}(X_{\Nis})$, where $j\colon (X\setminus Z)\hookrightarrow X$.
\end{lemma}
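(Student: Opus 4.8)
The plan is to build both the forget support map and the exact triangle directly from the defining homotopy fibre sequences of Definition~\ref{defi:E/Zdefi}, using only functoriality of $j_*$ and the octahedral axiom.

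First I would fix the open immersions $j\colon (X\setminus Z)\hookrightarrow X$ and $j'\colon (X\setminus Z')\hookrightarrow X$, together with the factorization $j' = j\circ \iota$ where $\iota\colon (X\setminus Z')\hookrightarrow (X\setminus Z)$ is the open immersion of the complement of $Z'\setminus Z$ inside $X\setminus Z$. Applying $j_*$ to the unit $E_{X\setminus Z}\to \iota_*\iota^* E_{X\setminus Z} = \iota_* E_{X\setminus Z'}$ and using $j_*\iota_* = j'_*$ gives a canonical map $\eta\colon j_*E_{X\setminus Z}\to j'_*E_{X\setminus Z'}$ fitting in a commutative triangle under $E_X$, i.e. $\eta\circ\eta_j = \eta_{j'}$. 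Taking homotopy fibres of the two maps out of $E_X$ then yields the desired forget support map $E_{Z/X}\to E_{Z'/X}$, where I would note that by Lemma~\ref{easyidentification} (applied to the \'etale, in fact open, morphism $j$) one has $j^* E_{Z'/X}\simeq E_{(Z'\setminus Z)/(X\setminus Z)}$, so that the homotopy fibre of $\eta$ is $j_*$ of the homotopy fibre of the unit $E_{X\setminus Z}\to \iota_*\iota^* E_{X\setminus Z}$, which is precisely $j_* E_{(Z'\setminus Z)/(X\setminus Z)}$.

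For the exact triangle I would invoke the octahedral axiom for the composite $\eta_{j'} = \eta\circ\eta_j$ in the triangulated category $\SH_{S^1}(X_\Nis)$: this produces an exact triangle relating the three homotopy fibres $\operatorname{hofib}(\eta_j) = E_{Z/X}$, $\operatorname{hofib}(\eta_{j'}) = E_{Z'/X}$, and $\operatorname{hofib}(\eta) = j_*E_{(Z'\setminus Z)/(X\setminus Z)}$, giving exactly
\[
E_{Z/X}\to E_{Z'/X}\to j_* E_{(Z'\setminus Z)/(X\setminus Z)} \xrightarrow{+1}
\]
with the first map the forget support map constructed above. Concretely one observes that $\operatorname{hofib}(\eta)$ computes as claimed because $j_*$ is a right Quillen functor on the relevant model structures and hence preserves homotopy fibre sequences, and the unit map $E_{X\setminus Z}\to\iota_*\iota^* E_{X\setminus Z}$ has homotopy fibre $E_{(Z'\setminus Z)/(X\setminus Z)}$ by definition.

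The only mildly delicate point is book-keeping with the identifications $j_*\iota_* \simeq j'_*$ and $j^*j'_* \simeq \iota_*$ (the latter a base change along the open cover, as in the proof of Lemma~\ref{easyidentification}), and checking that all the comparison maps are the canonical ones so that the resulting triangle is functorial; none of this requires $\AA^1$-invariance or the geometric presentation theorem, and the argument is formal once these compatibilities are recorded. I expect this compatibility check to be the main (though routine) obstacle.
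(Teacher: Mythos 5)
Your proposal is correct and follows essentially the same route as the paper: you factor $j'=j\circ\iota$, construct the forget support map from the unit $\id\to\iota_*\iota^*$ applied after $j^*$, and obtain the exact triangle by comparing the fibres of $\eta_j$, $\eta_{j'}$ and $\eta$, identifying $\operatorname{hofib}(\eta)$ as $j_*E_{(Z'\setminus Z)/(X\setminus Z)}$ via exactness of $j_*$ applied to the defining triangle on $X\setminus Z$. The only cosmetic difference is that you invoke the octahedral axiom for the composite $\eta_{j'}=\eta\circ\eta_j$, whereas the paper packages the same information as a $3\times 3$ diagram of distinguished triangles with the constant square on $E_X$ mapping in; these are equivalent formulations of the same argument.
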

\begin{proof}
Note first that $j'\colon  (X\setminus Z')\hookrightarrow X$ factorizes as
\[
X\setminus Z' = (X\setminus Z)\setminus (Z'\setminus Z) \xrightarrow{~k~} X\setminus Z \xrightarrow{~j~} X,
\]
and therefore the unit $\id\to k_*k^*$ of the adjunction $k^* \dashv k_*$ induces a morphism
$j_*j^*\to j_*k_*k^*j^*\simeq j'_*j'^*$.
This map is compatible with the units of the adjunctions $j'^* \dashv j'_*$ and $j^* \dashv j_*$, thus inducing the forget support map $E_{Z/X}\to E_{Z'/X}$ on homotopy fibres.

For the exact triangle consider the diagram
\[
\begin{tikzcd}
F[-1]    \arrow[r] \arrow[d]    &   {*} \arrow[r] \arrow[d]  		& F \arrow[d]\\
E_{Z/X} \arrow[r] \arrow[d, "\text{forget support}"'] 	&   E_X	\arrow[r] \arrow[d, "\id"]		& j_* j^* E_X \arrow[d]\\
E_{Z'/X}  \arrow[r]		&   E_X	\arrow[r]			& j'_*j'^*E_X
\end{tikzcd} 
\]
of distinguished triangles.
It remains to identify $F$ with $j_* E_{(Z'\setminus Z)/(X\setminus Z)}$.
Applying the exact functor $j_*$ to the exact triangle
\[
E_{(Z'\setminus Z)/(X\setminus Z)} \to    E_{(X\setminus Z)} \to k_*k^* E_{(X\setminus Z)},
\]
yields the right vertical exact triangle
\[
F=j_* E_{(Z'\setminus Z)/(X\setminus Z)} \to j_* j^* E_X \to j_* k_*k^*j^* E_X,
\]
where we used the definition $j'^* E_X \simeq E_{(X\setminus Z)}$.
\end{proof}

\begin{remark}
In particular, the previous Lemma~\ref{lemma:forgetsupport} induces in particular a long exact sequence
\[
\cdots\to E^n_{Z}(X) \to E^n_{Z'}(X)\to E^n_{Z'\setminus Z}(X\setminus Z)\to E^{n+1}_{Z}(X)\to\cdots.
\]
\end{remark}

\begin{sect}\label{para: Nisnevich squares}
Recall that a Nisnevich distinguished square
\begin{equation*}
\begin{tikzcd}
\tilde U \arrow[r] \arrow[d] & \tilde X \arrow[d, "f"]\\
U        \arrow[r, "j"] & X
\end{tikzcd} 
\end{equation*}
is a pullback square such that $j$ is an open immersion, $f$ is an \'etale morphism of finite type and $(X- j(U))_\mathrm{red} \times_X\tilde{X}\to (X- j(U))_\mathrm{red}$ is an isomorphism.
\end{sect}

\begin{sect}
Recall moreover, that an object-wise fibrant spectrum $E\in\Spt_{S^1}(\Sm_S)$ (\ie, every evaluation $E(X)$ is an ordinary omega spectrum) is \emph{Nisnevich local fibrant} if and only if $E(\emptyset)=*$ and for each Nisnevich distinguished square $Q$ as in \ref{para: Nisnevich squares}, the square $E(Q)$ is a homotopy pullback square (or equivalently a homotopy pushout square).
Equivalently, the sequence
\[
E(X)\to E(\tilde{X})\oplus E(U)\to E(\tilde{U}) 
\]
is a distinguished triangle and hence induces long exact sequences on homotopy groups.
The same observation holds for the Nisnevich local fibrant objects of $\Spt_{S^1}(X_\Nis)$ and the right adjoint $\delta_{X/S,*}\colon \Spt_{S^1}(\Sm_S) \to \Spt_{S^1}(X_{\Nis})$ of the adjunction \eqref{restrictionadjunction} preserves Nisnevich local fibrant objects.
\end{sect}

\begin{lemma}\label{lemma:NisnevichFibrancyConditionReformulated}
An object-wise fibrant spectrum $E\in \Spt_{S^1}(\Sm_S)$ is Nisnevich local fibrant if and only if for all Nisnevich distinguished squares as in \ref{para: Nisnevich squares}, the induced morphism
\[
 E_{Z/X}\to f_*f^* E_{Z/X} \simeq f_* E_{\tilde Z/\tilde X}
\]
(see Lemma~\ref{easyidentification}) is an equivalence.
Here, $Z:=X\setminus U$ and $\tilde Z:=(\tilde X\setminus \tilde U)\cong f^{-1}(Z)$.
\end{lemma}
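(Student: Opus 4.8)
The plan is to prove both implications by relating the Nisnevich square condition to the statement about $E_{Z/X}$ via the exact triangles that are already available. The key observation is that for a Nisnevich distinguished square with $Z=X\setminus U$, both $U=X\setminus Z$ and $\tilde U=\tilde X\setminus\tilde Z$ are open complements, and by definition of a Nisnevich square the morphism $\tilde f\colon \tilde U=\tilde X\setminus\tilde Z\to U=X\setminus Z$ is an étale morphism that is ``the same over $Z$'' — in fact $f$ restricts to an isomorphism on the reduced preimage of $Z$, which is exactly $\tilde Z_{\mathrm{red}}\xrightarrow{\sim} Z_{\mathrm{red}}$.

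For the forward direction, suppose $E$ is Nisnevich local fibrant. I would apply the exact functor $f_*$ to the defining homotopy fibre sequence $E_{\tilde Z/\tilde X}\to E_{\tilde X}\to \tilde j_*\tilde j^*E_{\tilde X}$ on $\tilde X_{\Nis}$, getting $f_*E_{\tilde Z/\tilde X}\to f_*E_{\tilde X}\to f_*\tilde j_*\tilde j^*E_{\tilde X}$. Using base change for the pullback square relating $\tilde j$ and $j$ (exactly as in the proof of Lemma~\ref{easyidentification}, but now for $f_*$ rather than $f^*$ — note $f_*$ is not in general well-behaved under base change, so instead I compare via $\tilde j_* = f^{-1}\!\circ\cdots$; more carefully, $f_*\tilde j_*\tilde j^* E_{\tilde X}=(f\tilde j)_*\tilde j^* f^* E_X = (jf')_* (f')^* j^* E_X = j_* f'_* (f')^* j^* E_X$ where $f'\colon\tilde X\setminus\tilde Z\to X\setminus Z$ is the restriction of $f$), I land the middle and right-hand terms of this sequence in a form directly comparable to the defining sequence for $E_{Z/X}$. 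The map $E_{Z/X}\to f_*E_{\tilde Z/\tilde X}$ is then the map induced on homotopy fibres by the commuting square with vertical maps $E_X\to f_*f^*E_X = f_*E_{\tilde X}$ and $j_*j^*E_X\to j_* f'_*(f')^* j^* E_X$. To show this induced map is an equivalence, it suffices — by the long exact sequence / octahedral argument — to show that the square
\[
\begin{tikzcd}
E_X \arrow[r] \arrow[d] & j_*j^*E_X \arrow[d]\\
f_*f^*E_X \arrow[r] & f_* j'_* j'^* f^* E_X
\end{tikzcd}
\]
is a homotopy pullback (here $j'=\tilde j$). But evaluating at an object $V\to X$ of $X_{\Nis}$, this square is precisely $E_X(Q\times_X V)$ for the Nisnevich distinguished square $Q$, base-changed along $V\to X$; since Nisnevich distinguished squares are stable under such base change and $E$ is Nisnevich local fibrant, each such square of spectra is a homotopy pullback. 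Hence the square of presheaves is an object-wise homotopy pullback, and therefore so is the induced map on vertical homotopy fibres, which is exactly $E_{Z/X}\xrightarrow{\sim} f_*E_{\tilde Z/\tilde X}$.

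For the converse, the argument runs in the opposite direction: given a Nisnevich distinguished square $Q$, I must show $E(Q)$ is a homotopy pullback after passing to $\Spt_{S^1}(X_{\Nis})$, i.e. the square displayed above is a homotopy pullback of presheaves on $X_{\Nis}$. By hypothesis $E_{Z/X}\to f_*E_{\tilde Z/\tilde X}$ is an equivalence for $Z = X\setminus U$, and running the octahedral/long-exact-sequence comparison backwards shows the square above is a homotopy pullback; then evaluating at the terminal object of $X_{\Nis}$ (namely $X$ itself) recovers that $E(Q)$ is a homotopy pullback square. One also needs to check the condition $E(\emptyset)=*$, but this is automatic since the hypothesis applied to the square with $\tilde X=\emptyset$ forces it. The main obstacle I anticipate is the bookkeeping in the base-change step: unlike $f^*$, the functor $f_*$ does not commute with arbitrary pullbacks, so one must be careful to only use base change for the open immersion $j$ (where $j^*$ is the relevant functor and $f^* j_*\simeq \tilde j_* \tilde f^*$ does hold because $f\in X_{\Nis}$, as exploited in Lemma~\ref{easyidentification}) and to reassemble $f_*\tilde j_*\tilde j^* E_{\tilde X}$ correctly as $j_*f'_*(f')^* j^* E_X$ via $f\tilde j = j f'$. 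Once the identifications are set up cleanly, both implications reduce to the standard fact that a square of spectra is a homotopy pullback if and only if the induced map on the vertical (or horizontal) homotopy fibres is an equivalence, applied object-wise on $X_{\Nis}$.
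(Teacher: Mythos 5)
Your argument is a detailed expansion of the paper's one-line proof, and the core mechanism is the same: identify $E_{Z/X}\to f_*E_{\tilde Z/\tilde X}$ with the map on vertical homotopy fibres of the square
\[
\begin{tikzcd}
E_X \arrow[r] \arrow[d] & j_*j^*E_X \arrow[d]\\
f_*f^*E_X \arrow[r] & j_* f'_* f'^{*} j^* E_X,
\end{tikzcd}
\]
observe that evaluating at $V\to X$ in $X_{\Nis}$ yields $E(Q\times_X V)$ (a Nisnevich distinguished square again, since these are stable under \'etale base change), and then invoke the standard fact that a square of spectra is a homotopy pullback iff the two horizontal (or two vertical) homotopy fibres agree. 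Your bookkeeping — rewriting $f_*\tilde j_*\tilde j^*f^* = (f\tilde j)_*(f\tilde j)^* = (jf')_*(jf')^* = j_*f'_*f'^*j^*$ via $f\tilde j = jf'$, rather than trying to base-change $f_*$ — is exactly the right move and matches the proof of Lemma~\ref{easyidentification} in spirit. So the substance is correct and agrees with the paper's approach.

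One small caveat: your remark that $E(\emptyset)=*$ ``is automatic since the hypothesis applied to the square with $\tilde X=\emptyset$ forces it'' does not hold up. In the only Nisnevich distinguished squares with $\tilde X=\emptyset$ one necessarily has $U=X$ (hence $Z=\emptyset$), and then $E_{Z/X}$ and $f_*E_{\tilde Z/\tilde X}$ are both trivially $*$ (the fibre of an identity map), so the hypothesis gives no constraint at all. In fact a constant presheaf with $E(\emptyset)\neq *$ sends every Nisnevich square to a pullback of identities and satisfies the $E_{Z/X}$ condition, so the ``$\emptyset$'' clause genuinely needs to be imposed separately. The paper's own proof does not address this either (and only ever uses the ``only if'' direction, where it is irrelevant), so this is a shared rather than introduced gap — but your claimed fix should be withdrawn or replaced by a note that $E(\emptyset)=*$ must be assumed or checked separately.
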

\begin{proof}
This follows immediately from the fact the a square of spectra is a homotopy pullback square if and only if the homotopy fibres of the horizontal morphisms are equivalent. 
\end{proof}

\begin{lemma}
Let $E\in\Spt_{S^1}(\Sm_S)$ be a Nisnevich local fibrant spectrum.
Let $Z_1$ and $Z_2$ be closed subsets of $X$, and set $Z_{12} = Z_1\cap Z_2$ and $Z = Z_1 \cup Z_2$.
Then the forget support maps
\begin{equation}\label{homotopyfibresquareforNis}
\begin{tikzcd}
E_{Z_{12}/X} \arrow[r] \arrow[d] & E_{Z_2/X} \arrow[d]\\
E_{Z_1/X}        \arrow[r] & E_{Z/X}
\end{tikzcd} 
\end{equation}
form a homotopy (co)fibre square in $E\in\Spt_{S^1}(X_\Nis)$.
\end{lemma}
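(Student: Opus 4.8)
The plan is to reduce the statement to Nisnevich descent for $E$ by evaluating everything object-wise on the small site $X_\Nis$.

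First I would recall that a commutative square in $\Spt_{S^1}(X_\Nis)$ is a homotopy pullback square --- equivalently, since we are dealing with spectra, a homotopy pushout square --- exactly when its evaluation at every $V\in X_\Nis$ is a homotopy cartesian square of spectra; so it suffices to treat one $V$ at a time. For a closed subset $W\subseteq X$ put $W_V:=W\times_X V$, a closed subset of $V$ whose open complement is $V\setminus W_V=V\times_X(X\setminus W)$. Unwinding Definition~\ref{defi:E/Zdefi} and using that, on small Nisnevich sites, restriction and pushforward along \'etale maps are computed object-wise (cf.\ Lemma~\ref{lem: restrictions to small sites vs restrictions in small sites} and the proof of Lemma~\ref{basechangeonhomotopygroups}), one identifies $E_X(V)=E(V)$ and $(j_{W,\ast}j_W^\ast E_X)(V)=E(V\setminus W_V)$, whence $E_{W/X}(V)\simeq\hofib\bigl(E(V)\to E(V\setminus W_V)\bigr)$, the forget support maps of Lemma~\ref{lemma:forgetsupport} being induced on these homotopy fibres by the restriction maps of $E$. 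Since $(Z_{12})_V=(Z_1)_V\cap(Z_2)_V$ and $Z_V=(Z_1)_V\cup(Z_2)_V$, the square~\eqref{homotopyfibresquareforNis} evaluated at $V$ is then the square obtained by taking object-wise homotopy fibres of the morphism from the constant square on $E(V)$ to
\[
\begin{tikzcd}
E\bigl(V\setminus((Z_1)_V\cap(Z_2)_V)\bigr)\arrow[r]\arrow[d] & E\bigl(V\setminus(Z_2)_V\bigr)\arrow[d]\\
E\bigl(V\setminus(Z_1)_V\bigr)\arrow[r] & E\bigl(V\setminus((Z_1)_V\cup(Z_2)_V)\bigr).
\end{tikzcd}
\]

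Next, setting $P:=V\setminus(Z_1)_V$ and $Q:=V\setminus(Z_2)_V$, two open subschemes of $V$, this last square is the square obtained by applying $E$ to
\[
\begin{tikzcd}
P\cap Q\arrow[r]\arrow[d] & Q\arrow[d]\\
P\arrow[r] & P\cup Q,
\end{tikzcd}
\]
because $P\cup Q=V\setminus((Z_1)_V\cap(Z_2)_V)$ and $P\cap Q=V\setminus((Z_1)_V\cup(Z_2)_V)$. I would then check that this square of schemes is an elementary Nisnevich --- in fact Zariski --- distinguished square in $\Sm_S$ in the sense of~\ref{para: Nisnevich squares}: it is a pullback square, $P\hookrightarrow P\cup Q$ is an open immersion, $Q\hookrightarrow P\cup Q$ is \'etale of finite type, and the base change of $Q\to P\cup Q$ along the reduced closed complement of $P$ in $P\cup Q$ is an isomorphism, because that complement is set-theoretically contained in the open $Q$. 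As $E$ is Nisnevich local fibrant, applying $E$ therefore yields a homotopy cartesian square of spectra.

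Finally, since the constant square on $E(V)$ is trivially homotopy cartesian and taking object-wise homotopy fibres of a morphism between two homotopy cartesian squares of spectra again produces a homotopy cartesian square --- the total homotopy fibre of the result being the homotopy fibre of a morphism between contractible spectra --- the square~\eqref{homotopyfibresquareforNis} evaluated at $V$ is homotopy cartesian; as $V$ was arbitrary, this proves the lemma. The one step requiring genuine attention is the bookkeeping in the first paragraph: identifying $E_{W/X}(V)$ with $\hofib(E(V)\to E(V\setminus W_V))$ and matching the forget support maps with the restriction maps of $E$, so that the evaluated square~\eqref{homotopyfibresquareforNis} really does appear as the fibrewise homotopy fibre of a constant square mapping into the Mayer--Vietoris square of $E$ for the open cover $\{P,Q\}$ of $P\cup Q$. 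Everything else is the elementary remark that the complements of $(Z_1)_V$, $(Z_2)_V$, their intersection and their union in $V$ assemble into an elementary Nisnevich square, which is precisely the input of Nisnevich local fibrancy.
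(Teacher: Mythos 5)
Your proof is correct and follows essentially the same line as the paper's: identify the square of open complements as a (Zariski, hence Nisnevich) distinguished square, invoke Nisnevich local fibrancy of $E$ to conclude that $E$ turns it into a homotopy pullback, and then recover~\eqref{homotopyfibresquareforNis} by taking homotopy fibres from the constant square on $E_X$. The only difference is cosmetic: you spell out the object-wise evaluation on each $V\in X_\Nis$, whereas the paper phrases the same argument directly at the level of the presheaves $j_{\ast}j^{\ast}E_X$ in $\Spt_{S^1}(X_\Nis)$; since homotopy pullback squares of presheaves are detected object-wise, the two formulations coincide.
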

\begin{proof}
First, observe that the respective open immersions form a Nisnevich distinguished square
\[
\begin{tikzcd}
X\setminus Z \arrow[r] \arrow[d] & X\setminus Z_2  \arrow[d]\\
X\setminus Z_1        \arrow[r] & X\setminus {Z_{12}} 
\end{tikzcd} 
\]
Denote by $j\colon X\setminus Z\hookrightarrow X$ and $j_i\colon X\setminus Z_i\hookrightarrow X$ for $i=1$, $i=2$ or $i=12$ the complementary open immersions.
Since $E$ is Nisnevich local fibrant, it follows that 
\[
\begin{tikzcd}
j_{12,*}j_{12}^* E_{X} \arrow[r] \arrow[d] & j_{2,*}j_{2}^* E_{X} \arrow[d]\\
j_{1,*}j_{1}^* E_{X}        \arrow[r] & j_*j^* E_{X}
\end{tikzcd} 
\]
is a homotopy pullback square.
Mapping into this square from the square with edges ${\rm id}_{E_X}$, which is a homotopy pullback square for trivial reasons, and taking homotopy fibres, yields the square \eqref{homotopyfibresquareforNis}.
Thus, \eqref{homotopyfibresquareforNis} is a homotopy pullback square, too.
\end{proof}

\begin{corollary}\label{corollary:disjointunions}
Let $E\in\Spt_{S^1}(\Sm_S)$ be a Nisnevich local fibrant spectrum and $Z_1,\ldots, Z_r\subseteq X$ be disjoint closed subsets.
Then
\[
 \bigoplus_{i=1}^r E_{Z_i/X} \simeq E_{(\coprod_{i=1}^r Z_i/X)}.
\]
\end{corollary}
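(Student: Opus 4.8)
The plan is a straightforward induction on $r$, the base case $r=1$ being a tautology (and $r=0$ reducing to $E_{\emptyset/X}=*$, the empty direct sum). For the inductive step I would split the union as $\coprod_{i=1}^r Z_i = Z' \cup Z_r$ with $Z' := \coprod_{i=1}^{r-1} Z_i$, noting that $Z'$ and $Z_r$ are disjoint closed subsets of $X$. Applying the preceding lemma to the pair $Z'$, $Z_r$ — so that $Z_{12} = Z'\cap Z_r = \emptyset$ and hence $E_{Z_{12}/X} = E_{\emptyset/X} = *$ — produces a homotopy (co)fibre square
\[
\begin{tikzcd}
{*} \arrow[r] \arrow[d] & E_{Z_r/X} \arrow[d]\\
E_{Z'/X} \arrow[r] & E_{(\coprod_{i=1}^r Z_i)/X}
\end{tikzcd}
\]
in $\Spt_{S^1}(X_\Nis)$.

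Since this square is in particular a homotopy pushout square and its upper left corner is the zero spectrum, the lower right corner is the coproduct $E_{Z'/X} \vee E_{Z_r/X}$; as finite wedges of (presheaves of) spectra agree with finite products, this is precisely $E_{Z'/X} \oplus E_{Z_r/X}$. Feeding in the inductive identification $E_{Z'/X} \simeq \bigoplus_{i=1}^{r-1} E_{Z_i/X}$ then yields $E_{(\coprod_{i=1}^r Z_i)/X} \simeq \bigoplus_{i=1}^r E_{Z_i/X}$, as desired.

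There is essentially no obstacle here: the entire content is already packaged in the homotopy (co)fibre square of the previous lemma together with the observation that $E_{\emptyset/X}=*$. The only trivial point to keep in mind is that $\coprod_{i=1}^r Z_i$ in the statement denotes the (closed) union of the pairwise disjoint $Z_i$, so that the previous lemma applies on the nose and the induction step decomposes this union compatibly with the forget support maps.
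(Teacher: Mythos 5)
Your proof is correct and is exactly the argument the paper intends; the paper states this as a corollary of the preceding homotopy (co)fibre square lemma with no written proof, and your induction using $Z_{12}=\emptyset$, $E_{\emptyset/X}=*$, and the identification of a pushout over $*$ with a wedge (= finite product of spectra) is the obvious way to fill it in.
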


\begin{definition}\label{defi:filtrationpresheaf}
Let $E_X\in\Spt_{S^1}(X_\Nis)$ be a spectrum.
For an integer $s\geq 0$, we define the spectrum
\[
E_{X^{(s)}}:=\hspace{-5pt}\underset{ \substack{ Z\subseteq X \text{closed} \\ \codim(Z,X)\geq s }}{\colim} E_{Z/X}
\]
in $\Spt_{S^1}(X_\Nis)$.
The structure maps for the colimit are the forget support maps (see Lemma~\ref{lemma:forgetsupport}).
\end{definition}

\begin{remark}
Informally, one should think of the colimit in the previous Definition~\ref{defi:filtrationpresheaf} as ``making the $Z$'s bigger''. The index category is filtered as one can take the union of two closed sets.
\end{remark}

\begin{sect}\label{para: filtration}
Since a closed subset of $X$ of codimension $\geq (s+1)$ is in particular a closed subset of codimension $\geq s$, we get a filtration
\begin{equation*}
* \to E_{X^{(d)}} \to \cdots \to E_{X^{(s+1)}} \to  E_{X^{(s)}} \to\cdots \to E_{X^{(0)}} \cong E_X
\end{equation*}
of presheaves of spectra on $X_\Nis$.
For the last equivalence, observe that the colimit in Definition~\ref{defi:filtrationpresheaf} has a terminal object $Z=X$ in the case $s=0$.
\end{sect}

\begin{definition}\label{defi:cofibre}
We denote the homotopy cofibre of $E_{X^{(s+1)}} \to  E_{X^{(s)}}$ by $E_{X^{(s/s+1)}}$.
\end{definition}

\begin{sect}
As usual, one can associate a spectral sequence (more precisely, a presheaf on $X_\Nis$ of spectral sequences) to such a situation:
Applying $\pi_n$ for an integer $n$ to the filtration of \ref{para: filtration} yields a finite filtration
\[
0 \subseteq \im\pi_n(E_{X^{(d)}} \to E_X) \subseteq \ldots \subseteq  \im\pi_n(E_{X^{(1)}} \to E_X) \subseteq \pi_n(E_X)
\]
and the associated spectral sequence
\[
 E^1_{p,q} =\pi_{p+q}(E_{X^{(p/p+1)}}) \Rightarrow \pi_{p+q}(E_X)
\]
is degenerate (and hence always converges in the strongest sense) as the filtration above is bounded.
Reindexing and rephrasing along Definition~\ref{defi:homotopygroups}, we get
\[
 E_1^{p,q} =E^{p+q}_{X^{(p/p+1)}} \Rightarrow E^{p+q}.
\]
The constructed spectral sequence is not yet the coniveau spectral sequence.
To obtain the latter, we will sheafify the whole situation (after taking homotopy groups as above) and identify the homotopy cofibres $E_{X^{(p/p+1)}}$ with certain coproducts.
\end{sect}

\begin{proposition}\label{proposition:EvalueationOfCofibres}
Let $E\in\Spt_{S^1}(\Sm_S)$ be a Nisnevich local fibrant spectrum.
Then for every integer $s\geq 0$, we have an equivalence
\[
 E_{X^{(s/s+1)}} \simeq \bigoplus_{z\in X^{(s)}} \mathfrak{z}_*\mathfrak{z}^*  E_{Z/X},
\]
where $\mathfrak{z}^*: \Spt_{S^1}(X_\Nis)\rightleftarrows \Spt_{S^1}(\Spec(\OO_{X,z})_\Nis): \mathfrak{z}_*$ is the adjunction \eqref{adjunctiononthesmallsites} for the canonical morphism $\mathfrak{z}\colon \Spec(\OO_{X,z})\to X$ and where we set $Z:=\overline{\{z\}}$ in each summand by abuse of notation.
\end{proposition}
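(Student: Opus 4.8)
The plan is to compute the homotopy cofibre of $E_{X^{(s+1)}}\to E_{X^{(s)}}$ by writing both terms as colimits over one common filtered poset, applying the forget support triangle of Lemma~\ref{lemma:forgetsupport} termwise, and then disentangling the answer into a coproduct over $X^{(s)}$. Write $\mathcal{I}_t$ for the filtered poset of closed subsets of $X$ of codimension $\geq t$, so that $E_{X^{(t)}}=\colim_{Z\in\mathcal{I}_t}E_{Z/X}$, and let $\mathcal{J}$ be the poset of pairs $(Z,Z')$ with $Z\in\mathcal{I}_s$, $Z'\in\mathcal{I}_{s+1}$ and $Z'\subseteq Z$, ordered componentwise. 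Using that $\mathcal{I}_s$ and $\mathcal{I}_{s+1}$ are stable under unions (Lemma~\ref{lemma:codimensionneq2}), one checks that the two projections $\mathcal{J}\to\mathcal{I}_s$ and $\mathcal{J}\to\mathcal{I}_{s+1}$ are cofinal, whence $E_{X^{(s)}}\simeq\colim_{\mathcal{J}}E_{Z/X}$ and $E_{X^{(s+1)}}\simeq\colim_{\mathcal{J}}E_{Z'/X}$, compatibly with the transition maps, which termwise are forget support maps. Since filtered colimits of spectra are homotopy colimits and homotopy cofibres commute with them, Lemma~\ref{lemma:forgetsupport} gives
\[
E_{X^{(s/s+1)}}\ \simeq\ \colim_{(Z,Z')\in\mathcal{J}}\, j'_{*}\,E_{(Z\setminus Z')/(X\setminus Z')},\qquad j'\colon X\setminus Z'\hookrightarrow X .
\]

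Next I would separate out the points of $X^{(s)}$. For $(Z,Z')\in\mathcal{J}$ let $z_1,\dots,z_r\in X^{(s)}$ be the generic points of the codimension-$s$ components of $Z$ and $W$ the union of the remaining (codimension $\geq s+1$) components. Replacing $Z'$ by $Z'\cup W\cup\bigcup_{i\neq j}(\overline{\{z_i\}}\cap\overline{\{z_j\}})$ — still an element of $\mathcal{I}_{s+1}$ by Lemmas~\ref{lemma:codimensionneq} and~\ref{lemma:codimensionneq2} — exhibits the pairs with $W\subseteq Z'$ and $\overline{\{z_i\}}\cap\overline{\{z_j\}}\subseteq Z'$ for $i\neq j$ as a cofinal subposet $\mathcal{J}^{\circ}\subseteq\mathcal{J}$. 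For such a pair $Z\setminus Z'=\coprod_i(\overline{\{z_i\}}\setminus Z')$ is a disjoint union of closed subsets of $X\setminus Z'$, so Corollary~\ref{corollary:disjointunions} and exactness of $j'_{*}$ give $j'_{*}E_{(Z\setminus Z')/(X\setminus Z')}\simeq\bigoplus_i j'_{*}E_{(\overline{\{z_i\}}\setminus Z')/(X\setminus Z')}$. Writing $T(Z)=\{z_1,\dots,z_r\}$, padding the coproduct by $*=E_{\emptyset/X}$ over $z\in X^{(s)}\setminus T(Z)$, and using that colimits commute with coproducts together with the cofinality of the full subposet $\mathcal{J}^{\circ}_z\subseteq\mathcal{J}^{\circ}$ of pairs with $z\in T(Z)$, this rearranges into
\[
E_{X^{(s/s+1)}}\ \simeq\ \bigoplus_{z\in X^{(s)}}\ \colim_{(Z,Z')\in\mathcal{J}^{\circ}_z}\, j'_{*}\,E_{(\overline{\{z\}}\setminus Z')/(X\setminus Z')} .
\]

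It remains to identify the $z$-summand with $\mathfrak{z}_{*}\mathfrak{z}^{*}E_{\overline{\{z\}}/X}$. That summand depends on $(Z,Z')$ only through $Z'$, and since $(Z'\cup\overline{\{z\}},Z')\in\mathcal{J}^{\circ}_z$ whenever $z\notin Z'$, the colimit is over the poset of $Z'\in\mathcal{I}_{s+1}$ with $z\notin Z'$, with term $j'_{*}E_{(\overline{\{z\}}\setminus Z')/(X\setminus Z')}\simeq j'_{*}{j'}^{*}E_{\overline{\{z\}}/X}$ by Lemma~\ref{easyidentification}. The step I expect to be the main obstacle is the following supportedness input: since $E$ is Nisnevich local fibrant, so is $E_X=\delta_{X/S,*}E$, hence so is the homotopy fibre $E_{\overline{\{z\}}/X}$ of $E_X\to j_{*}j^{*}E_X$, while $j^{*}E_{\overline{\{z\}}/X}\simeq *$ for $j\colon X\setminus\overline{\{z\}}\hookrightarrow X$ (as $j^{*}j_{*}\simeq\id$); therefore Nisnevich — in fact Zariski — descent for $E_{\overline{\{z\}}/X}$, applied to the covering of $V\times_X D$ by $V\times_X D'$ and $(V\times_X D)\setminus(V\times_X\overline{\{z\}})$, shows that for open subschemes $D'\subseteq D$ of $X$ both containing $z$ with $\overline{\{z\}}\cap D=\overline{\{z\}}\cap D'$ the restriction $d_{*}d^{*}E_{\overline{\{z\}}/X}\to d'_{*}{d'}^{*}E_{\overline{\{z\}}/X}$ is an equivalence. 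Applying this with $D'=X\setminus Z'$ and $D=X\setminus(\overline{\{z\}}\cap Z')$ reduces the $z$-summand to the colimit, over proper closed subsets $B\subsetneq\overline{\{z\}}$, of $j_{B,*}j_B^{*}E_{\overline{\{z\}}/X}$ with $j_B\colon X\setminus B\hookrightarrow X$; a second application shows that this family of opens is cofinal, up to the equivalences just produced, among all open neighbourhoods of $z$. Hence the $z$-summand equals $\colim_{D\ni z}d_{*}d^{*}E_{\overline{\{z\}}/X}$, which — the colimit being equally computable over affine neighbourhoods — is $\mathfrak{z}_{*}\mathfrak{z}^{*}E_{\overline{\{z\}}/X}$ by the proof of Lemma~\ref{basechangeonhomotopygroups}. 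What is left are the routine cofinality verifications for the posets appearing above.
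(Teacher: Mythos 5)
Your proof follows the same strategy as the paper's: take filtered colimits of the forget-support triangle of Lemma~\ref{lemma:forgetsupport} over a poset of pairs, pass to a cofinal subposet by augmenting the codimension-$(s+1)$ set with the mutual intersections and ``small'' components (so that the points of $X^{(s)}$ are separated), split with Corollary~\ref{corollary:disjointunions}, swap coproduct and colimit, and identify each $z$-summand with $\mathfrak{z}_*\mathfrak{z}^*E_{\overline{\{z\}}/X}$. The one place you go beyond the paper is the last step: the paper passes from a colimit over opens $U$ with $\codim(X\setminus U,X)\geq s+1$ to the colimit over \emph{all} opens $U\ni z$ with only the remark that separating neighbourhoods are cofinal among all neighbourhoods, which does not by itself address the codimension constraint (opens of that form are not cofinal among all opens containing $z$ once $s>0$). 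Your supportedness input — that $j^*E_{\overline{\{z\}}/X}\simeq *$ off $\overline{\{z\}}$, so by Zariski descent $j_*j^*E_{\overline{\{z\}}/X}$ depends only on $\overline{\{z\}}\cap U$ — is exactly what makes that passage legitimate, since both families of opens yield all open neighbourhoods of $z$ inside $\overline{\{z\}}$. So your argument is the same route with this detail made explicit; it is correct.
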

\begin{proof}
For closed subsets $Z$ and $Z'$ of $X$ with $Z\subseteq Z'$, Lemma~\ref{lemma:forgetsupport} yields an exact triangle
\[
 E_{Z/X} \to E_{Z'/X} \to j_* E_{(Z'\setminus Z)/(X\setminus Z)}.
\]
Taking filtered colimits yields an exact triangle
\begin{equation}\label{equation:otherhocofibredescription}
 E_{X^{(s+1)}} \to  E_{X^{(s)}} \to \hspace{-5pt}\underset{ \substack{ Z,Z'\subseteq X \text{closed} \\ Z\subseteq Z' \\ \codim(Z',X)\geq s\\ \codim(Z,X)\geq s+1 }}{\colim} j_* E_{(Z'\setminus Z)/(X\setminus Z)}
\end{equation}
of objects from $\Spt_{S^1}(X_\Nis)$.
In particular, the right-hand side is equivalent to $E_{X^{(s/s+1)}}$.
The colimit in \eqref{equation:otherhocofibredescription} runs over the filtered category of pairs $(Z,Z')$ where $Z\subseteq Z'$ for $Z,Z'\subseteq X$ closed subsets of the indicated codimensions with an arrow $(Z,Z')\to (\hat Z, \hat Z')$ if and only if both $Z\subseteq \hat Z$ and $Z'\subseteq \hat Z'$.
\\
We will now rewrite this colimit.
Fix a pair $(Z,Z')$.
Since $X$ is noetherian, $Z'$ is noetherian as a topological space and hence the union of its finite number of irreducible components $Z'_1,\ldots,Z'_r$, each of codimension $\geq s$.
It follows, that all the intersections $Z'_i \cap Z'_j$ for $i\neq j$ are of codimension $\geq s +1$ by Lemma~\ref{lemma:codimensionneq}.
Set
\begin{equation*}
 \hat Z := Z \cup \bigcup_{i\neq j} (Z'_i \cap Z'_j) \cup \hspace{-5pt}\underset{\substack{i ~\text{such that}\\ \codim(Z'_i,X)\geq s+1}}\bigcup Z'_i.
\end{equation*}
By Lemma~\ref{lemma:codimensionneq2}, $\hat Z$ has codimension $\geq s+1$ and $(\hat Z, \hat Z':=Z')$ receives a map from our original pair $(Z,Z')$.
Let $T\subseteq X^{(s)}$ be the set of generic points of those $Z'_i$ of codimension $s$.
Then $\overline{T} \cup \hat Z=\hat Z'$.
Further, $\hat U = X \setminus \hat Z \subseteq X$ is an open \emph{separating} neighbourhood of $T$.
By this we mean that $\overline{T} \cap \hat U$ splits into a disjoint union of closures of points of $T$ in $\hat U$.
Combining these observations, we get a cofinal functor from the category of pairs $(T,U)$ with $T \subseteq X^{(s)}$ a finite subset and $U \subseteq X$ an open separating neighbourhood of $T$ into our original index category by mapping a pair $(T,U)$ to the pair $(X\setminus U,\overline{T}\cup (X\setminus U))$.
In particular,
\begin{equation*}
 \underset{ \substack{ Z,Z'\subseteq X \text{closed} \\ Z\subseteq Z' \\ \codim(Z',X)\geq s\\ \codim(Z,X)\geq s+1 }}{\colim} j_* E_{(Z'\setminus Z)/(X\setminus Z)}
 \simeq
 \hspace{-5pt}\underset{ \substack{T \subseteq X^{(s)} ~\text{finite} \\ T\subseteq U\subseteq X ~\text{open sep nbh}}}{\colim} j_* E_{(\overline{T}\cap U)/U}.\hspace{7.5ex}
\end{equation*}
As $U$ is a separating neighbourhood of $T$, Corollary \ref{corollary:disjointunions} gives a splitting
\begin{equation*}
 E_{(\overline{T}\cap U)/U} \simeq \bigoplus_{z\in T} E_{(\overline{ \{z\} }\cap U)/U}.
\end{equation*}
Note that the open separating neighbourhoods of $T$ are cofinal in all open neighbourhoods of $T$.
In particular, we get
\begin{equation*}
\begin{array}{rcl}
\hspace{1.3ex}
\underset{ \substack{T \subseteq X^{(s)} ~\text{finite} \\ T\subseteq U\subseteq X ~\text{open sep nbh}}}{\colim} j_* E_{(\overline{T}\cap U)/U} \!\!&\!\simeq \!&\! \underset{ \substack{T \subseteq X^{(s)} ~\text{finite} \\ T\subseteq U\subseteq X ~\text{open nbh}}}{\colim} \bigoplus_{z\in T} j_\ast E_{(\overline{ \{z\} }\cap U)/U}\vspace{2ex}
\\
                \!\!&\!\simeq \!&\! \bigoplus_{z\in X^{(s)}} \underset{ z\in  U\subseteq X ~\text{open nbh}}{\colim} j_\ast E_{(\overline{ \{z\} }\cap U)/U}
\end{array}
\end{equation*}
Finally, by Lemma \ref{easyidentification}, $E_{(\overline{ \{z\} }\cap U)/U} \simeq j^\ast E_{\overline{ \{z\} } / X}$ and the claim follows.
\end{proof}

\begin{corollary}\label{corollary:howthecofibreslook}
Let $E\in\Spt_{S^1}(\Sm_S)$ be a Nisnevich local fibrant spectrum.
Then for every integer $s\geq 0$ and every integer $q$, we have an isomorphism
\[
 E^q_{X^{(s/s+1)}} \cong \bigoplus_{z\in X^{(s)}} \mathfrak{z}_*\mathfrak{z}^*  E^q_{Z/X}.
\] 
\end{corollary}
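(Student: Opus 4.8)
The plan is to obtain the statement by simply applying the homotopy group presheaf functor $\pi_{-q}$ to the equivalence already established in Proposition~\ref{proposition:EvalueationOfCofibres}. Recall that by Definition~\ref{defi:homotopygroups} the object $E^q_{X^{(s/s+1)}}$ is, by definition, the presheaf $\pi_{-q}(E_{X^{(s/s+1)}})$ on $X_\Nis$, and that Proposition~\ref{proposition:EvalueationOfCofibres} provides an equivalence $E_{X^{(s/s+1)}}\simeq\bigoplus_{z\in X^{(s)}}\mathfrak{z}_*\mathfrak{z}^* E_{Z/X}$ in $\Spt_{S^1}(X_\Nis)$ (with $Z=\overline{\{z\}}$ in the $z$-th summand). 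Thus it suffices to compute $\pi_{-q}$ of the right-hand side and to match it with $\bigoplus_{z\in X^{(s)}}\mathfrak{z}_*\mathfrak{z}^* E^q_{Z/X}$.

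First I would commute $\pi_{-q}$ with the coproduct. Since both homotopy group presheaves and coproducts of presheaves of spectra are formed object-wise, this reduces to the analogous statement for spectra; there an arbitrary coproduct is the filtered colimit of its finite sub-coproducts, finite coproducts of spectra coincide with finite products, and $\pi_*$ commutes with filtered colimits and with finite products. Hence $\pi_{-q}\bigl(\bigoplus_{z}\mathfrak{z}_*\mathfrak{z}^* E_{Z/X}\bigr)\cong\bigoplus_{z}\pi_{-q}(\mathfrak{z}_*\mathfrak{z}^* E_{Z/X})$.

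It then remains, for each $z\in X^{(s)}$, to identify $\pi_{-q}(\mathfrak{z}_*\mathfrak{z}^* E_{Z/X})$ with $\mathfrak{z}_*\mathfrak{z}^*\pi_{-q}(E_{Z/X})=\mathfrak{z}_*\mathfrak{z}^* E^q_{Z/X}$. This is precisely Lemma~\ref{basechangeonhomotopygroups} applied to the spectrum $E_{Z/X}\in\Spt_{S^1}(X_\Nis)$ and the point $z$. Combining the three isomorphisms yields the asserted isomorphism of presheaves on $X_\Nis$.

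The one step that needs genuine input — rather than purely formal manipulations — is this last one: because $\mathfrak{z}\colon\Spec(\OO_{X,z})\to X$ is only a pro-open (essentially open) immersion and not itself an object of $X_\Nis$, the functor $\mathfrak{z}^*$ is not an object-wise pullback, so one cannot argue directly that $\pi_{-q}$ passes through $\mathfrak{z}_*\mathfrak{z}^*$. This is exactly the difficulty resolved by Lemma~\ref{basechangeonhomotopygroups}, via writing $\mathfrak{z}$ as the cofiltered limit of its affine Zariski neighbourhoods and using that homotopy groups commute with the resulting filtered colimit. Everything else in the argument is bookkeeping.
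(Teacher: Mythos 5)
Your proof is correct and follows exactly the same route as the paper: apply $\pi_{-q}$ to the equivalence of Proposition~\ref{proposition:EvalueationOfCofibres}, commute it past the coproduct, and invoke Lemma~\ref{basechangeonhomotopygroups} termwise. You even spell out the coproduct-commutation step a bit more carefully than the paper does.
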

\begin{proof}
By Proposition~\ref{proposition:EvalueationOfCofibres} $E_{X^{(s/s+1)}} \simeq \bigoplus_{z\in X^{(s)}} \mathfrak{z}_*\mathfrak{z}^*  E_{Z/X}$.
Using Lemma \ref{basechangeonhomotopygroups} we compute
\[
\begin{array}{rcl}
\pi_{-q}(\bigoplus_{z\in X^{(s)}} \mathfrak{z}_*\mathfrak{z}^*  E_{Z/X}) & \cong & \bigoplus_{z\in X^{(s)}}\pi_{-q}( \mathfrak{z}_*\mathfrak{z}^*  E_{Z/X})\\
								      & \cong & \bigoplus_{z\in X^{(s)}} \mathfrak{z}_*\mathfrak{z}^* \pi_{-q}(E_{Z/X}). 
\end{array}\qedhere
\] 
\end{proof}

\begin{sect}
Recall that a sheaf $F$ of abelian groups on the site $X_\Nis$ is called \emph{flabby}, if the presheaf $\mathrm{H}^q(-,F)$ on $X_\Nis$ is zero for $q\neq 0$.
A flabby sheaf is in particular acyclic, \ie, $\mathrm{H}^q(X,F)=0$ for $q\neq 0$.
\end{sect}

\begin{proposition}\label{proposition:flasque}
Let $E\in\Spt_{S^1}(\Sm_S)$ be a Nisnevich local fibrant spectrum, $z \in X^{(s)}$ with $Z:=\overline{\{z\}}$ and $q$ an integer.
Then the presheaf
\[
\mathfrak{z}_*\mathfrak{z}^* E_{Z/X}^{q}
\]
of abelian groups is a flabby sheaf on $X_\Nis$.
\end{proposition}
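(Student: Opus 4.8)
The plan is to identify $\mathfrak{z}_*\mathfrak{z}^* E^q_{Z/X}$ with the direct image along the canonical morphism $\bar{\mathfrak{z}}\colon\Spec k(z)\to X$ of a Nisnevich sheaf living on the small site of the residue field $k(z)$, and then to deduce flabbiness from the fact that a field has vanishing higher Nisnevich cohomology.

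First, since $E$ is Nisnevich local fibrant and $\delta_{X/S,*}$, $j^*$, $j_*$ (for $j\colon X\setminus Z\hookrightarrow X$) are right Quillen, the homotopy fibre $E_{Z/X}$ of $\eta_j\colon E_X\to j_*j^*E_X$ is Nisnevich local fibrant as well; hence the presheaf $E^q_{Z/X}=\pi_{-q}(E_{Z/X})$ is a Nisnevich sheaf on $X_\Nis$. Restricting the fibre sequence $E_{Z/X}\to E_X\to j_*j^*E_X$ along $j$ and using $j^*j_*\simeq\id$ together with the triangle identity (so that $j^*\eta_j$ becomes the identity of $j^*E_X$) gives $j^*E_{Z/X}\simeq\ast$, so $E^q_{Z/X}$ is supported on $Z$. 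Pulling back along $\mathfrak{z}$ (base change along the open immersion $X\setminus Z\hookrightarrow X$) and observing that $\mathfrak{z}^{-1}(Z)=\{z\}$ is the closed point of the local scheme $\Spec\OO_{X,z}$, because $Z=\overline{\{z\}}$, we see that $\mathfrak{z}^*E^q_{Z/X}$ is a Nisnevich sheaf on $\Spec(\OO_{X,z})_\Nis$ supported on the closed point $z$. For the closed immersion $i_z\colon\Spec k(z)\hookrightarrow\Spec\OO_{X,z}$, any Nisnevich sheaf supported on the closed point is canonically the direct image $i_{z,*}$ of its restriction (recollement); thus $\mathfrak{z}^*E^q_{Z/X}\cong i_{z,*}N$ with $N:=i_z^*\mathfrak{z}^*E^q_{Z/X}$ a Nisnevich sheaf on $\Spec(k(z))_\Nis$. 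Writing $\bar{\mathfrak{z}}:=\mathfrak{z}\circ i_z$ and using Lemma~\ref{basechangeonhomotopygroups} to compare the spectrum-level functors with the sheaf-theoretic inverse and direct image, we conclude
\[
\mathfrak{z}_*\mathfrak{z}^*E^q_{Z/X}\;\cong\;\mathfrak{z}_*i_{z,*}N\;=\;\bar{\mathfrak{z}}_*N,
\]
which in particular is a Nisnevich sheaf on $X_\Nis$.

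It remains to show $\mathrm{H}^n(U,\bar{\mathfrak{z}}_*N)=0$ for all $U\in X_\Nis$ and $n\neq 0$. I use the Leray spectral sequence for $\bar{\mathfrak{z}}$,
\[
\mathrm{H}^p\big(U,\,R^q\bar{\mathfrak{z}}_*N\big)\;\Longrightarrow\;\mathrm{H}^{p+q}\big(U\times_X\Spec k(z),\,N\big).
\]
Because $U\to X$ is \'etale of finite type, the scheme $U\times_X\Spec k(z)$ is \'etale of finite type over the field $k(z)$, hence a finite disjoint union of spectra of finite separable field extensions of $k(z)$; since the small Nisnevich site of a field has cohomological dimension zero, the abutment vanishes in all positive total degrees, and the same computation applied to the presheaf $V\mapsto\mathrm{H}^q(V\times_X\Spec k(z),N)$, whose associated sheaf is $R^q\bar{\mathfrak{z}}_*N$, shows $R^q\bar{\mathfrak{z}}_*N=0$ for $q>0$. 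Thus the spectral sequence degenerates onto the line $q=0$, yielding $\mathrm{H}^n(U,\bar{\mathfrak{z}}_*N)\cong\mathrm{H}^n(U\times_X\Spec k(z),N)=0$ for $n\neq 0$. This proves that $\mathfrak{z}_*\mathfrak{z}^*E^q_{Z/X}$ is a flabby Nisnevich sheaf.

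The step requiring the most care is the identification in the second paragraph: one must verify that $E^q_{Z/X}$ is a genuine Nisnevich sheaf, that it is supported on $Z$ and hence that its pullback to the local scheme is supported on the closed point, that Nisnevich sheaves supported on a closed point are direct images from it, and one must keep the spectrum-level adjunction $(\mathfrak{z}^*,\mathfrak{z}_*)$ compatible with the sheaf-theoretic inverse and direct image, for which Lemma~\ref{basechangeonhomotopygroups} is the relevant tool. Once this is settled, the cohomology vanishing is an immediate consequence of fields being Nisnevich points.
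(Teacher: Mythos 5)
The later half of your argument (identifying $\mathfrak{z}_*\mathfrak{z}^*E^q_{Z/X}$ as a pushforward $\bar{\mathfrak{z}}_*N$ from the point $\Spec k(z)$ and invoking the Leray spectral sequence together with the fact that finite separable field extensions have no higher Nisnevich cohomology) is sound, and is in spirit the same computation the paper carries out for the flabbiness half. However, the first step contains a genuine gap: you assert that, because $E_{Z/X}$ is Nisnevich local fibrant, the homotopy presheaf $E^q_{Z/X}=\pi_{-q}(E_{Z/X})$ is already a Nisnevich \emph{sheaf}. This implication does not hold. Nisnevich local fibrancy of a presheaf of spectra means the associated presheaves send distinguished Nisnevich squares to homotopy pullback squares; it does \emph{not} mean the individual homotopy presheaves satisfy the equalizer condition for Nisnevich covers. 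A standard counterexample: the algebraic $K$-theory presheaf of spectra is Nisnevich local fibrant for regular noetherian schemes, yet $K_0$ is not a Nisnevich sheaf (its Nisnevich sheafification kills $\Pic$). Lemma~\ref{basechangeonhomotopygroups}, which you also cite, only says that $\mathfrak{z}_*\mathfrak{z}^*$ commutes with $\pi_n$; it gives no information about the sheaf property.

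Because of this, the recollement step that identifies $\mathfrak{z}^*E^q_{Z/X}$ with $i_{z,*}N$ is not available to you as stated: recollement identifies a \emph{sheaf} supported on the closed point with a pushforward from that point, but you have not established that $\mathfrak{z}^*E^q_{Z/X}$, or $\mathfrak{z}_*\mathfrak{z}^*E^q_{Z/X}$, is a sheaf in the first place. Establishing exactly this is the hard content of the paper's proof: using Corollary~\ref{corollary:disjointunions} and Lemma~\ref{lemma:NisnevichFibrancyConditionReformulated}, the paper first computes $\mathfrak{z}_*\mathfrak{z}^*E^q_{Z/X}(V)\cong\bigoplus_{v\in V(z)}E^q_{v/V_v^{\rm loc}}(V_v^{\rm loc})$ for $V\to X$ \'etale, and then verifies the equalizer diagram for a Nisnevich cover directly, splitting off the distinguished lift $v_0$ with $k(v_0)=k(z)$ and using excision to see that the relevant restriction maps are isomorphisms. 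Only after this explicit verification does the recollement (``skyscraper sheaf'') argument become legitimate. Your proposal arrives at the right picture of where the sheaf ultimately lives, but skips the step that makes the picture correct.
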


\begin{proof}
Let $q$ be an integer.
Let $V \rightarrow X$ be \'{e}tale of finite type with (set-theoretical) fibre $V(z)$ over the point $z\in X$.
For a point $v\in V(z)$, we set $V_{v}^{\rm loc} := {\rm Spec}(\mathcal{O}_{V,v})$ and $V_{z}^{\rm loc}:=V \times_XX_{z}^{\rm loc}$.
Using the identification $Z\times_{X}V_{z}^{\rm loc} = \coprod_{v\in V(z)} v$, we have
\[
\begin{array}{rcll}
\mathfrak{z}_*\mathfrak{z}^* E_{Z/X}^q(V)
&\cong&	\pi_{-q}\left( \mathfrak{z}_*\mathfrak{z}^* E_{Z/X}(V) \right)	&\text{(by Lemma~\ref{basechangeonhomotopygroups})}\\
&\cong& \pi_{-q}\left( E_{Z\times_{X}V_{z}^{\rm loc} / V_z^{\rm loc}}(V_z^{\rm loc}) \right) &\text{(by Lemma~\ref{easyidentification})}\\  
&\cong& \pi_{-q}\left( E_{\left(\coprod_{v\in V(z)} v\right) / V_z^{\rm loc}}(V_z^{\rm loc}) \right) &\\  
&\cong& \pi_{-q}\left( \bigoplus_{v\in V(z)} E_{v/V_{z}^{\rm loc}}(V_{z}^{\rm loc}) \right) &\text{(by Corollary~\ref{corollary:disjointunions})}\\  
&\cong& \bigoplus_{v\in V(z)} E_{v/V_{v}^{\rm loc}}^q(V_{v}^{\rm loc}).&
\end{array}
\]
Let us now prove the sheaf property of $\mathfrak{z}_*\mathfrak{z}^* E_{Z/X}^{q}$.
Using Lemma~\ref{easyidentification}, we may restrict us to Nisnevich covers $V\rightarrow X$ of $X$.
Writing $W := V\times_XV$, we have to show that
\[
\begin{tikzcd}[column sep=small]
0 \arrow[r]&
\mathfrak{z}_*\mathfrak{z}^*E_{Z/X}^q(X) \arrow[d, equal]\arrow[r]&
\mathfrak{z}_*\mathfrak{z}^*E_{Z/X}^q(V) \arrow[d, equal]\arrow[r,"{\rm pr}_1^* - {\rm pr}_2^*"]&
\mathfrak{z}_*\mathfrak{z}^*E_{Z/X}^q(W) \arrow[d, equal]
\\
&
E_{z/X_z^{\rm loc}}^q(X_z^{\rm loc})\arrow[r] &
\bigoplus_{v\in V(z)} E_{v/V_{v}^{\rm loc}}^q(V_{v}^{\rm loc})\arrow[r] &
\bigoplus_{w\in W(z)} E_{w/W_{w}^{\rm loc}}^q(W_{w}^{\rm loc})
\end{tikzcd}
\]
is an exact sequence.
Since $V\to X$ is a Nisnevich cover, we can find a point $v_0$ in the fibre $V(z)$ with residue field $k(v_0) = k(z)$.
In particular, by Lemma~\ref{lemma:NisnevichFibrancyConditionReformulated}, the composition
\[
  E_{z/X_z^{\rm loc}}^q(X_z^{\rm loc}) \longrightarrow  \bigoplus_{v\in V(z)} E_{v/V_{v}^{\rm loc}}^q(V_{v}^{\rm loc}) \xrightarrow{\rm can} E_{v_0/V_{v_0}^{\rm loc}}^q(V_{v_0}^{\rm loc})
\]
is an isomorphism, which settles the exactness at $\mathfrak{z}_*\mathfrak{z}^*E_{Z/X}^q(X)$.\\
For the exactness at $\mathfrak{z}_*\mathfrak{z}^*E_{Z/X}^q(V)$, observe that $\bigoplus_{v\in V(z)} E_{v/V_{v}^{\rm loc}}^q(V_{v}^{\rm loc})$ splits into a direct sum of $E_{z/X_z^{\rm loc}}^q(X_z^{\rm loc})$ and $\bigoplus_{v_0 \neq v\in V(z)} E_{v/V_{v}^{\rm loc}}^q(V_{v}^{\rm loc})$.
Hence, it is enough to show that the restricted map $\bigoplus_{v_0 \neq v\in V(z)} E_{v/V_{v}^{\rm loc}}^q(V_{v}^{\rm loc}) \rightarrow \bigoplus_{w\in W(z)} E_{w/W_{w}^{\rm loc}}^q(W_{w}^{\rm loc})$ is a mono{\-}morphism.
To this end, it is suffices to show that
\[
 {\rm pr}_1^* - {\rm pr}_2^* \colon 
  E_{v/V_{v}^{\rm loc}}^q(V_{v}^{\rm loc}) \longrightarrow
  E_{v\otimes v_0/W_{v\otimes v_0}^{\rm loc}}^q(W_{v\otimes v_0}^{\rm loc}) \oplus E_{v_0\otimes v/W_{v_0\otimes v}^{\rm loc}}^q(W_{v_0\otimes v}^{\rm loc})
\]
is a monomorphism for each $v$ different from $v_0$.
But even the projection
\[
{\rm pr}_1^* \colon E_{v/V_{v}^{\rm loc}}^q(V_{v}^{\rm loc}) \longrightarrow E_{v\otimes v_0/W_{v\otimes v_0}^{\rm loc}}^q(W_{v\otimes v_0}^{\rm loc})
\]
is an isomorphism by Lemma~\ref{lemma:NisnevichFibrancyConditionReformulated}:
Indeed, the equality $k(v\otimes v_0) = k(v)$ follows from $k(v_0) = k(z)$, so ${\rm pr}_1 \colon (W_{v\otimes v_0}^{\rm loc}, v\otimes v_0) \rightarrow (V_{v}^{\rm loc},v)$ is (essentially) a Nisnevich neighbourhood.
This finishes the proof of the sheaf property of $\mathfrak{z}_*\mathfrak{z}^* E_{Z/X}^{q}$.
\\
In order to show the flabbieness, let us first show that $\mathfrak{z}^* E_{Z/X}^{q}$ is flabby:
Again, we have $\mathfrak{z}^* E_{Z/X}^{q} = E_{z/X_{z}^{\rm loc}}^q$ by Lemma~\ref{easyidentification}.
Let $j\colon U \hookrightarrow X_{z}^{\rm loc}$ be the open complement of the closed point $z \in X_{z}^{\rm loc}$.
Then $j^*E_{z/X_{z}^{\rm loc}}^q$ is trivial by construction.
Hence, $E_{z/X_{z}^{\rm loc}}^q$ is supported on $z$, \ie, $E_{z/X_{z}^{\rm loc}}^q = z_*z^*E_{z/X_{z}^{\rm loc}}^q$ is flabby as a skyscraper-sheaf.
For the flabbiness of $\mathfrak{z}_*\mathfrak{z}^* E_{Z/X}^{q}$, we have to show that ${\rm H}^i(V_{\rm Nis}, \mathfrak{z}_*\mathfrak{z}^* E_{Z/X}^{q})$ is trivial for all $V \rightarrow X$ \'{e}tale of finite type and $i>0$.
Since $\mathfrak{z}^* E_{Z/X}^{q}$ is flabby, it is $\mathbb{R}\mathfrak{z}_*$-acyclic, \ie, $\mathfrak{z}_*\mathfrak{z}^* E_{Z/X}^{q} \simeq \mathbb{R}\mathfrak{z}_*\mathfrak{z}^* E_{Z/X}^{q}$.
In particular, we have
\[
 {\rm H}^i(V_{\rm Nis}, \mathfrak{z}_*\mathfrak{z}^* E_{Z/X}^{q}) \cong
 {\rm H}^i(V_{\rm Nis}, \mathbb{R}\mathfrak{z}_*\mathfrak{z}^* E_{Z/X}^{q}) \cong
 {\rm H}^i(V_{z,{\rm Nis}}^{\rm loc}, \mathfrak{z}^* E_{Z/X}^{q}),
\]
but the latter group is trivial since $V_{z,{\rm Nis}}^{\rm loc} \rightarrow X_{z}^{\rm loc}$ is \'{e}tale and $\mathfrak{z}^* E_{Z/X}^{q}$ is flabby.
\end{proof}

\begin{corollary}\label{corollary:sumisflabby}
Let $E\in\Spt_{S^1}(\Sm_S)$ be a Nisnevich local fibrant spectrum.
Then for every integer $s\geq 0$ and every integer $q$, the presheaf $E_{X^{(s/s+1)}}^q$ is a flabby sheaf on $X_\Nis$.
\end{corollary}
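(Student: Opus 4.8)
The plan is to combine the two preceding results directly. By Corollary~\ref{corollary:howthecofibreslook}, for every integer $q$ we have an isomorphism of presheaves on $X_\Nis$
\[
E_{X^{(s/s+1)}}^q \cong \bigoplus_{z\in X^{(s)}} \mathfrak{z}_*\mathfrak{z}^* E_{Z/X}^q,
\]
with $Z=\overline{\{z\}}$ in each summand. By Proposition~\ref{proposition:flasque}, each individual summand $\mathfrak{z}_*\mathfrak{z}^* E_{Z/X}^q$ is a flabby sheaf on $X_\Nis$. So it remains only to check that an arbitrary direct sum of flabby sheaves is again a flabby sheaf.

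The sheaf property of a direct sum is not entirely formal on a general site, but here it is harmless: the site $X_\Nis$ is (Noetherian, hence) such that every object admits a quasi-compact cover and cohomology commutes with filtered colimits; a direct sum of sheaves is the filtered colimit of its finite subsums, and finite direct sums of sheaves are sheaves, so $\bigoplus_z \mathfrak{z}_*\mathfrak{z}^* E_{Z/X}^q$ is a sheaf. For flabbiness, one needs $\mathrm{H}^i(V_\Nis, -)=0$ for every $V\to X$ \'etale of finite type and $i>0$. Since $V$ is Noetherian of finite Krull dimension, its Nisnevich cohomological dimension is finite and $\mathrm{H}^i(V_\Nis,-)$ commutes with filtered colimits of sheaves; writing the direct sum as the filtered colimit of finite subsums and using that cohomology commutes with finite direct sums, one reduces to the flabbiness of each $\mathfrak{z}_*\mathfrak{z}^* E_{Z/X}^q$, which is Proposition~\ref{proposition:flasque}. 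This gives $\mathrm{H}^i(V_\Nis, E_{X^{(s/s+1)}}^q)=0$ for $i>0$, i.e.\ $E_{X^{(s/s+1)}}^q$ is flabby.

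The only genuine point to be careful about is the interchange of cohomology with the (possibly infinite) direct sum, which relies on the finite cohomological dimension of the Noetherian finite-dimensional scheme $V$ together with the fact that Nisnevich cohomology commutes with filtered colimits of abelian sheaves; this is standard (see, e.g., the Nisnevich-topology analogue of the corresponding \'etale statement). Everything else is a direct invocation of Corollary~\ref{corollary:howthecofibreslook} and Proposition~\ref{proposition:flasque}, so this corollary is essentially a bookkeeping consequence of what precedes it.
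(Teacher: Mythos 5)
Your proof is correct and follows essentially the same route as the paper: Corollary~\ref{corollary:howthecofibreslook} gives the direct-sum decomposition, Proposition~\ref{proposition:flasque} gives flabbiness of each summand, and the noetherian hypothesis on $X_\Nis$ lets you push the sheaf property and cohomology vanishing through the (possibly infinite) direct sum. The paper's own proof is terser---it simply notes that $X_\Nis$ is a noetherian site (citing Voevodsky) so the presheaf direct sum is already a sheaf and then appeals to Proposition~\ref{proposition:flasque}---leaving the commutation of $\mathrm{H}^i(V_\Nis,-)$ with the direct sum implicit, which you make explicit (though the finite cohomological dimension you invoke is not really needed; commutation of cohomology with filtered colimits on a noetherian site already suffices).
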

\begin{proof}
By Corollary \ref{corollary:howthecofibreslook}, we have $E^q_{X^{(s/s+1)}} \simeq \bigoplus_{z\in X^{(s)}} \mathfrak{z}_*\mathfrak{z}^*  E^q_{Z/X}$.
Here the direct sum is the direct sum of presheaves.
But $X_\Nis$ in noetherian (see e.g.~\cite[Proposition 5.2]{Voevodsky07}), so the direct sum of sheaves is the direct sum of presheaves and the claim follows from Proposition \ref{proposition:flasque}.
\end{proof}

\section{The Nisnevich Gersten complex}\label{Section: Gersten complex}

\begin{lemma}\label{lemma:precousincomplex}
Let $E\in\Spt_{S^1}(\Sm_S)$ be a spectrum and $n$ an integer.
The cofibre sequences of Definition~\ref{defi:cofibre} yield a complex of presheaves on $X_\Nis$ of abelian groups
\[
0\to E_X^{n} \xrightarrow{e} E^{{n}}_{X^{(0/1)}} \xrightarrow{d^{0}} E^{{n+1}}_{{X^{(1/2)}}} \xrightarrow{d^{1}} \cdots \xrightarrow{d^{d-2}} E^{{n+d-1}}_{{X^{(d-1/d)}}} \xrightarrow{d^{d-1}} E^{n+d}_{X^{(d)}} \to 0.
\]
The (Nisnevich) sheafification $(-)^\sim$ of this complex is exact at the first spot $(E_X^{n})^\sim$ if and only if the canonical map
           \begin{equation}\label{eqn:firstexactness}
           \begin{array}{lcccl}
           (\alpha_0^n)^\sim \colon& (E^{n}_{X^{(1)}})^\sim &\to& (E^{n}_{X^{(0)}})^\sim&
           \end{array}
           \end{equation}
           is zero and it is exact at the spot $(E^{{n+s}}_{X^{(s/s+1)}})^\sim$ for $s\geq 0$ if both the canonical maps
           \begin{equation}\label{eqn:secondexactness}
           \begin{array}{rcccl}
            (\alpha_{s-1}^{n+s})^\sim   \colon&(E^{n+s}_{X^{(s)}})^\sim &\to    & (E^{n+s}_{X^{(s-1)}})^\sim &\text{ and}\vspace{1ex}\\
            (\alpha_{s+1}^{n+s+1})^\sim \colon &(E^{n+s+1}_{X^{(s+2)}})^\sim &\to& (E^{n+s+1}_{X^{(s+1)}})^\sim&
           \end{array}
           \end{equation}
           are zero (where the first condition is empty for $s=0$).
\end{lemma}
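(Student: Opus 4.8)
The plan is to identify the displayed sequence with the $d_1$-complex of the exact couple attached to the filtration of~\ref{para: filtration}, and then to read off the three exactness statements after Nisnevich sheafification by a short diagram chase; crucially, none of this uses $\AA^1$-invariance. Concretely, for each $s\ge 0$ the transition map $E_{X^{(s+1)}}\to E_{X^{(s)}}$ of the filtration and the cofibre map $E_{X^{(s)}}\to E_{X^{(s/s+1)}}$ of Definition~\ref{defi:cofibre} sit in an exact triangle $E_{X^{(s+1)}}\to E_{X^{(s)}}\to E_{X^{(s/s+1)}}\to E_{X^{(s+1)}}[1]$ in $\Spt_{S^1}(X_\Nis)$. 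Applying $\pi_{-m}$ objectwise yields, for every integer $m$, a long exact sequence of presheaves of abelian groups on $X_\Nis$
\[
\cdots\to E^m_{X^{(s+1)}}\xrightarrow{\alpha_s^m}E^m_{X^{(s)}}\xrightarrow{\beta_s^m}E^m_{X^{(s/s+1)}}\xrightarrow{\gamma_s^m}E^{m+1}_{X^{(s+1)}}\xrightarrow{\alpha_s^{m+1}}E^{m+1}_{X^{(s)}}\to\cdots,
\]
where $\alpha_s^m$ is induced by the transition map (these are the maps called $\alpha$ in the statement). Using $E_{X^{(0)}}\simeq E_X$ and $E_{X^{(d+1)}}\simeq\ast$ (the latter because $\emptyset$ is the only closed subset of $X$ of codimension $>d$) one identifies the outer terms $E^n_{X^{(0)}}=E^n_X$ and $E^{n+d}_{X^{(d/d+1)}}=E^{n+d}_{X^{(d)}}$. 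I would then set $e:=\beta_0^n$ and $d^s:=\beta_{s+1}^{n+s+1}\circ\gamma_s^{n+s}$; both $d^0\circ e=0$ and $d^{s+1}\circ d^s=0$ hold because each of these composites contains a subcomposite of the form $\gamma_t\circ\beta_t$ of two consecutive maps in a long exact sequence. This establishes the first assertion.

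For the exactness statements I would use that the Nisnevich sheafification functor $(-)^\sim$ on abelian presheaves over $X_\Nis$ is exact, so that applying it termwise turns each of the above long exact sequences into a long exact sequence of Nisnevich sheaves. Exactness of the sheafified complex at $(E^n_X)^\sim$ amounts to $e^\sim=(\beta_0^n)^\sim$ being a monomorphism, i.e.\ $\ker\!\big((\beta_0^n)^\sim\big)=0$; by the sheafified exactness of $E^n_{X^{(1)}}\xrightarrow{\alpha_0^n}E^n_{X^{(0)}}\xrightarrow{\beta_0^n}E^n_{X^{(0/1)}}$ this kernel equals $\im\!\big((\alpha_0^n)^\sim\big)$, which is $0$ if and only if $(\alpha_0^n)^\sim=0$; this gives the stated equivalence. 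For the spot $(E^{n+s}_{X^{(s/s+1)}})^\sim$ with $s\ge 0$: if $(\alpha_{s+1}^{n+s+1})^\sim=0$, then by the sheafified exactness of $E^{n+s+1}_{X^{(s+2)}}\xrightarrow{\alpha_{s+1}}E^{n+s+1}_{X^{(s+1)}}\xrightarrow{\beta_{s+1}}E^{n+s+1}_{X^{(s+1/s+2)}}$ the map $(\beta_{s+1}^{n+s+1})^\sim$ is a monomorphism, whence $\ker\!\big((d^s)^\sim\big)=\ker\!\big((\gamma_s^{n+s})^\sim\big)=\im\!\big((\beta_s^{n+s})^\sim\big)$ (the last step by sheafified exactness at $E^{n+s}_{X^{(s/s+1)}}$); if moreover $(\alpha_{s-1}^{n+s})^\sim=0$, then by the sheafified exactness at $E^{n+s}_{X^{(s)}}$ the map $(\gamma_{s-1}^{n+s-1})^\sim$ is an epimorphism, hence $\im\!\big((d^{s-1})^\sim\big)=\im\!\big((\beta_s^{n+s})^\sim\big)$ too. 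For $s=0$ the incoming differential is $e^\sim=(\beta_0^n)^\sim$ itself, so this last identity is automatic, which is why no condition on an index $-1$ is required. Combining the two identities gives $\ker\!\big((d^s)^\sim\big)=\im\!\big((d^{s-1})^\sim\big)$, the desired exactness.

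I do not expect a real obstacle: the argument is purely the formalism of an exact couple combined with the exactness of sheafification. The only points demanding care are the index bookkeeping --- matching the connecting maps $\gamma$ and cofibre maps $\beta$ for the triangles with indices $s-1$, $s$, $s+1$ against the differentials $d^{s-1}$, $d^s$ and the transition maps $\alpha$ of the filtration --- and the degenerate ends $s=0$ and $s=d$, where one of the triangles collapses because $E_{X^{(d+1)}}\simeq\ast$.
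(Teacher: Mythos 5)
Your proposal is correct and follows essentially the same route as the paper: both set up the long exact sequences in $\pi_{-\bullet}$ associated to the cofibre sequences $E_{X^{(s+1)}}\to E_{X^{(s)}}\to E_{X^{(s/s+1)}}$, define $e=\beta_0^n$ and $d^s=\beta_{s+1}^{n+s+1}\circ\gamma_s^{n+s}$, observe that the composition is zero because each $d^{s+1}\circ d^s$ contains a consecutive pair $\gamma\circ\beta$ from a long exact sequence, and then read off the exactness statements after applying the exact Nisnevich sheafification functor. Your version simply spells out the resulting diagram chase that the paper leaves implicit by referring to the big diagram, and it handles the index bookkeeping (including $E_{X^{(d+1)}}\simeq\ast$ and the degenerate case $s=0$) correctly.
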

\begin{proof}
The long exact sequences on homotopy groups associated to the cofibre sequences $E_{X^{(s+1)}}\to E_{X^{(s)}}\to E_{X^{(s/s+1)}}$ from Definition~\ref{defi:cofibre} for $s\geq 0$ yield a diagram
\[
\begin{tikzcd}[column sep=1em]
  & E^n_{X^{(1)}}\arrow[dr, "\alpha_0^n"] &               & && E^{n+2}_{X^{(3)}}\arrow[dr, "\alpha_2^{n+2}"] &&\\
  &               & E^n_{X^{(0)}}\arrow[dr, "\beta_0^n"] & &&& E^{n+2}_{X^{(2)}}\arrow[dr, "\beta_2^{n+2}"] &\\
0\arrow[r] & E^n_X\arrow[ur, equal] \arrow[rr, "e"] && E^n_{X^{(0/1)}}\arrow[dr, "\gamma_0^n"]\arrow[rr, "d^0"] && E^{n+1}_{X^{(1/2)}}\arrow[ur, "\gamma_1^{n+1}"] \arrow[rr, "d^1"] && E^{n+2}_{X^{(2/3)}}\\
  &                       &&& E^{n+1}_{X^{(1)}}\arrow[dr, "\alpha_0^{n+1}"]\arrow[ur, "\beta_1^{n+1}"] &&&&\\
  &                       && E^{n+1}_{X^{(2)}}\arrow[ur, "\alpha_1^{n+1}"]  && E^{n+1}_{X^{(0)}}   &&&
\end{tikzcd}
\]
and we define the middle horizontal sequence as indicated.
This sequence is clearly a complex as the diagonal lines are complexes.
The remaining statement follows immediately from sheafification $(-)^\sim$ applied to the whole diagram.
\end{proof}

\begin{sect}\label{sec:augmentedgerstencomplex}
For a Nisnevich local fibrant spectrum $E\in\Spt_{S^1}(\Sm_S)$ we can rewrite the complex of presheaves on $X_\Nis$ of abelian groups from the previous Lemma~\ref{lemma:precousincomplex} with the help of Corollary~\ref{corollary:howthecofibreslook} as
\begin{multline}\label{eqn:precousincomplex}
0\to E_X^{n} \xrightarrow{e} \bigoplus_{z\in X^{(0)}} \mathfrak{z}_*\mathfrak{z}^*  E_{Z/X}^n \xrightarrow{d^{0}} \bigoplus_{z\in X^{(1)}} \mathfrak{z}_*\mathfrak{z}^*  E_{Z/X}^{n+1} \xrightarrow{d^{1}} \cdots \\
\cdots \xrightarrow{d^{d-2}} \bigoplus_{z\in X^{(d-1)}} \mathfrak{z}_*\mathfrak{z}^*  E^{n+d-1}_{Z/X} \xrightarrow{d^{d-1}} \bigoplus_{z\in X^{(d)}} \mathfrak{z}_*\mathfrak{z}^*  E^{n+d}_{Z/X} \to 0 .
\end{multline}
\end{sect}


\begin{definition}\label{defi:gerstencomplex}
For every integer $n$, we define the \emph{Nisnevich Gersten complex} $\mathcal{G}^\bullet(E,n)$ of $E$ and homotopical degree $n$ as the complex with entries
\[
 \mathcal{G}^s(E,n):= \bigoplus_{z\in X^{(s)}} \mathfrak{z}_*\mathfrak{z}^*  E_{Z/X}^{n+s}
\]
for $s\geq 0$ and zero otherwise. The differentials $d^s$ are defined as in \ref{sec:augmentedgerstencomplex}.
\end{definition}

\begin{sect}
For every integer $n$, we can reformulate~\eqref{eqn:precousincomplex} as a map
\[
E_X^{n} \xrightarrow{e} \mathcal{G}^\bullet(E,n)
\]
into a complex of flabby sheaves (see Corollary~\ref{corollary:sumisflabby}) of abelian groups.
\end{sect}

\begin{sect}\label{sect: abuse of stalk notation}
In abuse of notation, we will just write $E(X_x^h)$ for the stalk of $E$ at a point $x$ of $X$.
Here of course $X_x^h$ denotes the Henselian local scheme ${\rm Spec}(\mathcal{O}_{X,x}^h)$. 
\end{sect}

\begin{proposition}\label{proposition:gerstencomplex}
Let $E\in\Spt_{S^1}(\Sm_S)$ be a Nisnevich local fibrant spectrum and $n$ an integer.
There is a complex of sheaves on $X_\Nis$ of abelian groups
\[
0\to (E_X^{n})^\sim \xrightarrow{\tilde e}  \mathcal{G}^0(E,n) \xrightarrow{d^{0}} \mathcal{G}^1(E,n) \xrightarrow{d^{1}} \cdots  \xrightarrow{d^{d-1}} \mathcal{G}^d(E,n) \to 0
\]
where all but the first entry are flabby Nisnevich sheaves. This complex is
\begin{enumerate}
 \item exact at the first spot $(E_X^{n})^\sim$ if and only if, for each point $x$ of $X$ and all $Z\subseteq X$ closed with $\codim(Z,X)\geq 1$, the forget support map
 \[
  E^n_{Z/X}(X_x^h)\to E^n_{X}(X_x^h)
 \]
 is trivial and
 \item exact at $\mathcal{G}^s(E,n)$ for $s\geq 0$ if, Nisnevich-locally on $X$,
 \begin{enumerate}
 \item if $s= 1$, for each point $x$ of $X$ and all $Z\subseteq X$ closed with $\codim(Z,X)$ $\geq 1$, the forget support map
 \[
  E^{n+1}_{Z/X}(X_x^h)\to E^{n+1}_{X}(X_x^h)
 \]
 is trivial and
 \item if $s> 1$, for all $Z\subseteq X$ closed with $\codim(Z,X)\geq s$, there exists $Z\subseteq Z'\subseteq X$ closed with $\codim(Z',X)\geq s-1$ such that the forget support map
 \[
  E^{n+s}_{Z/X}(X)\to E^{n+s}_{Z'/X}(X)
 \]
 is trivial and
 \item for all $s\geq0$ and all $Z\subseteq X$ closed with $\codim(Z,X)\geq s+2$, there exists $Z\subseteq Z'\subseteq X$ closed with $\codim(Z',X)\geq s+1$ such that the forget support map
 \[
  E^{n+s+1}_{Z/X}(X)\to E^{n+s+1}_{Z'/X}(X)
 \]
 is trivial.
 \end{enumerate}
\end{enumerate}
\end{proposition}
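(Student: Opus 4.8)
The plan is to deduce everything from Lemma~\ref{lemma:precousincomplex} by passing to Nisnevich stalks. First I would observe that the augmentation $E^n_X\to\mathcal{G}^\bullet(E,n)$ of \S\ref{sec:augmentedgerstencomplex} sheafifies to the complex in the statement, whose entries in positive degrees are flabby Nisnevich sheaves by Corollary~\ref{corollary:sumisflabby}; so only the exactness assertions require an argument. By Lemma~\ref{lemma:precousincomplex}, exactness at $(E^n_X)^\sim$ is equivalent to $(\alpha_0^n)^\sim=0$, and exactness at $\mathcal{G}^s(E,n)=(E^{n+s}_{X^{(s/s+1)}})^\sim$ is implied by the two vanishings $(\alpha_{s-1}^{n+s})^\sim=0$ and $(\alpha_{s+1}^{n+s+1})^\sim=0$, the first being vacuous for $s=0$. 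Since each $\alpha_{t-1}^{m}\colon E^{m}_{X^{(t)}}\to E^{m}_{X^{(t-1)}}$ is induced by the forget--support maps, the whole task is to rephrase the vanishing of these maps of Nisnevich sheaves in terms of forget--support maps on stalks.

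A morphism of Nisnevich sheaves on $X_\Nis$ is zero if and only if it is zero on all stalks, and the stalk at a point $x$ is the value on the Henselian local scheme $X^h_x$ (see \S\ref{sect: abuse of stalk notation}). By Definition~\ref{defi:filtrationpresheaf} the presheaf $E^{m}_{X^{(t)}}$ is the objectwise filtered colimit $\colim_{\codim(Z,X)\geq t} E^{m}_{Z/X}$ (homotopy groups commuting with filtered colimits), so its stalk at $x$ is $\colim_{\codim(Z,X)\geq t} E^{m}_{Z/X}(X^h_x)$, where each term is computed via Lemmas~\ref{easyidentification} and~\ref{basechangeonhomotopygroups} after base change to $X^h_x$. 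As every element of a filtered colimit comes from a single stage, $(\alpha_{t-1}^{m})^\sim=0$ is therefore equivalent to the statement that for every point $x$ and every closed $Z\subseteq X$ with $\codim(Z,X)\geq t$, the composite
\[
E^{m}_{Z/X}(X^h_x)\longrightarrow \colim_{\codim(Z',X)\geq t-1} E^{m}_{Z'/X}(X^h_x)
\]
vanishes. When $t=1$ the index category of the target colimit has the terminal object $Z'=X$, so this composite is simply the forget--support map $E^{m}_{Z/X}(X^h_x)\to E^{m}_X(X^h_x)$: taking $(t,m)=(1,n)$ for $\alpha_0^n$ gives the ``if and only if'' assertion in~(1), and taking $(t,m)=(1,n+1)$ gives~(2)(a).

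It remains to treat the cases where the target colimit is genuine, namely $\alpha_{s-1}^{n+s}$ with $s\geq 2$ and $\alpha_{s+1}^{n+s+1}$ with $s\geq 0$, which after relabelling the codimension bounds match conditions~(2)(b) and~(2)(c). Here the displayed composite vanishes as soon as every class in $E^{m}_{Z/X}(X^h_x)=\colim_{(V,v)\to(X,x)} E^{m}_{Z/X}(V)$ dies in the colimit: a given class is the image of some $\xi_V\in E^{m}_{Z/X}(V)$ for a Nisnevich neighbourhood $V\to X$ of $x$, and the hypothesis --- applied after shrinking $X$ to such a $V$, this being the content of ``Nisnevich-locally on $X$'', and legitimate because exactness of a complex of sheaves may be checked on a Nisnevich cover while $X^h_x$ is the cofiltered limit of its Nisnevich neighbourhoods --- produces a closed $Z\subseteq Z'$ of the appropriate codimension with $\xi_V\mapsto 0$ under the forget--support map on $V$-sections, whence the class maps to $0$ in $E^{m}_{Z'/X}(X^h_x)$ and a fortiori in the colimit. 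Feeding in $(t,m)=(s,n+s)$ yields~(2)(b) and $(t,m)=(s+2,n+s+1)$ yields~(2)(c), and combined with the previous paragraph this finishes the argument. I expect the genuine difficulty to lie entirely in this last step: keeping the colimit index categories defining $E_{X^{(\bullet)}}$ aligned with the quantifiers over closed subsets of $X$ and of its Nisnevich neighbourhoods, using that the auxiliary $Z'$ may depend on $Z$ but must annihilate all of $E^{m}_{Z/X}$, and making the passage ``Nisnevich-locally on $X$'' $\Rightarrow$ the statement over each $X^h_x$ precise --- everything else being formal given Lemmas~\ref{easyidentification} and~\ref{basechangeonhomotopygroups} and the noetherianity of $X$.
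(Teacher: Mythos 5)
Your proposal is correct and takes essentially the same route as the paper: the paper's own proof is a one-line reduction to Lemma~\ref{lemma:precousincomplex} and Corollary~\ref{corollary:sumisflabby}, asserting that the stated conditions are just the expanded form of the vanishings \eqref{eqn:firstexactness} and \eqref{eqn:secondexactness}, and you have supplied precisely the expansion the authors left implicit (passage to Henselian stalks, commuting the two filtered colimits, and using a cofinality/refinement argument to reconcile closed subsets of $X$ with closed subsets of a Nisnevich neighbourhood $V$).
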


\begin{proof}
The complex is obtained by applying the sheafification functor to the complex~\eqref{eqn:precousincomplex}.
By Corollary~\ref{corollary:sumisflabby}, all but the first entry are flabby Nisnevich sheaves.
The exactness conditions are just expanded versions of \eqref{eqn:firstexactness} and \eqref{eqn:secondexactness}.
\end{proof}

\section{Effaceability}\label{section:effaceability}

\begin{sect}\label{sect:hypothesisonS}
In this chapter, let $S$ be the spectrum of a Henselian discrete valuation ring $\mathfrak{o}$ with infinite residue field $\mathbb{F}$ of characteristic $p$ and quotient field $k$.
Note that we make use of this hypothesis only from Lemma~\ref{lemma:effaceability} onwards.
\end{sect}

\begin{construction}\label{construction:absoluteconstruction}
Let $E\in\Spt_{S^1}(\Sm_S)$ be a spectrum and $f\colon \tilde X\to X$ a morphism in $\Sm_S$.
We consider the morphism
\begin{equation}\label{eqn:firsteta}
 \eta_f\colon E_X\to f_*E_{\tilde X} 
\end{equation}
in $\Spt_{S^1}(X_\Nis)$ given on an \'etale morphism $V\to X$ of finite type as the map
\[
E_X(V)=E(V) \to E(V\times_X \tilde X) = E_{\tilde X}(V\times_X \tilde X)
\]
induced by the projection.
This clearly generalizes the construction~\eqref{canonicalmapeta} where the morphism $f$ was assumed to be \'etale.
Indeed, in this case we have $f^*(E_X)\cong E_{\tilde X}$.
\end{construction}

\begin{construction}\label{construction:relativeconstruction}
Next, for $E\in\Spt_{S^1}(\Sm_S)$, a closed subset $Z\subseteq X$ and a pullback diagram
\begin{equation}\label{eqn:diagramminquestion}
\begin{tikzcd}
\tilde X\setminus \tilde Z \arrow[r, hook, "\tilde j"]\arrow[d, "\tilde f"]&\tilde X \arrow[d, "f"]&\\
X\setminus Z \arrow[r, hook, "j"] &X        
\end{tikzcd}
\end{equation}
we want to define a morphism
\begin{equation}\label{eqn:relativeeta}
\eta_f\colon E_{Z/X}\to f_*E_{\tilde Z/\tilde X} 
\end{equation}
that coincides with~\eqref{eqn:firsteta} for $Z=X$.
First note that the commutative diagram~\eqref{eqn:diagramminquestion} induces the base-change morphism
\begin{equation*}\label{eqn:nummereins}
f^*j_*\xrightarrow{\text{(b.c.)}} \tilde j_*\tilde f^*.
\end{equation*}
Further, by adjunction, Construction~\ref{construction:absoluteconstruction} induces a map
\begin{equation*}\label{eqn:nummerzwei}
\tilde f^* j^*E_X\cong \tilde f^* E_{X\setminus Z} \xrightarrow{\eta_{f}^\flat} E_{\tilde X\setminus \tilde Z}  \cong \tilde j^*E_{\tilde X}. 
\end{equation*}
Composition with the unit yields a morphism
\begin{equation}\label{eqn:longunitmap}
j_*j^*E_X \xrightarrow{\eta} f_*f^*j_*j^* E_X \xrightarrow{f_*\text{(b.c.)}j^*} f_*\tilde j_* \tilde f^*j^* E_X
\xrightarrow{f_*\tilde j_*\eta_{f}^\flat } f_*\tilde j_*\tilde j^* E_{\tilde X}
\end{equation}
which is seen to fit into a commutative square
\[
\begin{tikzcd}
E_X \arrow[d, "\eta_f"]\arrow[r, "\eta_j"] & \arrow[d] j_*j^* E_X\\
f_* E_{\tilde X}\arrow[r, "f_*\eta_j"]        & f_*\tilde j_*\tilde j^* E_{\tilde X}.
\end{tikzcd}
\]
inducing the desired map $\eta_f$ by taking horizontal homotopy fibres.
\end{construction}

\begin{lemma}\label{lemma:commutativeetatriangle}
Let
\[
X_2 \xrightarrow{f_2} X_1 \xrightarrow{f_1} X
\]
be two morphisms of noetherian schemes of finite Krull dimension, $Z\subseteq X$ a closed subset and $Z_1:=Z\times_X X_1$, $Z_2:=Z\times_X X_2$ the respective base changes.
Then we have a commutative triangle
\[
\begin{tikzcd}
E_{Z/X}\arrow[r, "\eta_{f_1}"]  \arrow[dr, "\eta_{f_1f_2}"'] &  {f}_{1,*}E_{Z_1/X_1} \arrow[d, "f_{1,*}\eta_{f_2}"] \\
                                                     &  f_{1,*} f_{2,*}  E_{Z_2/X_2}
\end{tikzcd}
\]
in $\Spt_{S^1}(X_{\Nis})$ of the respective morphisms \eqref{eqn:relativeeta}.
\end{lemma}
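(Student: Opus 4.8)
The plan is to deduce the statement from the analogous, essentially tautological, triangle for the ``absolute'' maps of Construction~\ref{construction:absoluteconstruction} (the case $Z=X$), and then to transport it through the homotopy-fibre construction of Construction~\ref{construction:relativeconstruction}. Throughout I would work with the strict models of $f_*$, $j_*$ and $f^*$ as honest functors on the presheaf categories, so that all squares that occur below commute on the nose and the base-change maps are strict natural transformations.

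\emph{Step 1 (the absolute triangle).} First I would prove that for $Z=X$ one has $\eta_{f_1f_2}=f_{1,*}\eta_{f_2}\circ\eta_{f_1}$ as maps $E_X\to f_{1,*}f_{2,*}E_{X_2}$, using the canonical identification $(f_1f_2)_*\cong f_{1,*}f_{2,*}$. This is an object-wise check: on an \'etale $V\to X$ of finite type, $\eta_{f_1}(V)$ is the projection-induced map $E(V)\to E(V\times_XX_1)$, $(f_{1,*}\eta_{f_2})(V)=\eta_{f_2}(V\times_XX_1)$ is the projection-induced map $E(V\times_XX_1)\to E((V\times_XX_1)\times_{X_1}X_2)$, and their composite, transported along the canonical isomorphism $(V\times_XX_1)\times_{X_1}X_2\cong V\times_XX_2$, is the projection-induced map $E(V)\to E(V\times_XX_2)$, i.e.\ $\eta_{f_1f_2}(V)$ by Construction~\ref{construction:absoluteconstruction}. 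The same argument applied to the open complements yields the corresponding triangle for the restrictions $\eta_j$, $f_{1,*}\eta_{j_1}$, $f_{1,*}f_{2,*}\eta_{j_2}$ of open immersions, and, after passing to adjuncts, the triangle for the maps $\eta^\flat$ occurring in~\eqref{eqn:longunitmap}.

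\emph{Step 2 (compatibility of base change).} Next I would record that the base-change morphisms $f_1^*j_*\to j_{1,*}\tilde f_1^{\,*}$ and $f_2^*j_{1,*}\to j_{2,*}\tilde f_2^{\,*}$ (notation as in~\eqref{eqn:diagramminquestion}, with $j_i$ the complementary open immersion on $X_i$ and $\tilde f_i$ the induced map on complements) paste, along the outer pullback rectangle obtained by stacking two squares of the form~\eqref{eqn:diagramminquestion}, to the base-change morphism $(f_1f_2)^*j_*\to j_{2,*}(\tilde f_1\tilde f_2)^*$. This is the standard fact that the Beck--Chevalley transformation attached to a pullback square is compatible with vertical pasting of squares, and I would simply invoke the calculus of mates. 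Together with the compatibility of the units $\mathrm{id}\to f_*f^*$ with composition of morphisms, and with Step~1 applied to the $\eta^\flat$'s, this shows that the three-term composite~\eqref{eqn:longunitmap} attached to $f_1f_2$ equals the vertical composite of the one attached to $f_1$ with $f_{1,*}$ applied to the one attached to $f_2$.

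\emph{Step 3 (assembling).} By Construction~\ref{construction:relativeconstruction}, each relative $\eta_{f_i}$ is the map on horizontal homotopy fibres induced by a commutative square $Q_{f_i}$ whose left vertical is the absolute $\eta_{f_i}$, whose right vertical is the composite~\eqref{eqn:longunitmap}, and whose horizontal arrows are the relevant $\eta_j$-maps. Steps~1 and~2 say precisely that the defining square $Q_{f_1f_2}$ of $\eta_{f_1f_2}$ is the vertical pasting of $Q_{f_1}$ on top of $f_{1,*}Q_{f_2}$. Since formation of horizontal homotopy fibres is functorial and carries a vertically pasted pair of squares to the composite of the two induced maps, this identifies $\eta_{f_1f_2}$ with $f_{1,*}\eta_{f_2}\circ\eta_{f_1}$, which is the assertion. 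The only point that is not purely formal is Step~2 --- making the base-change morphisms compose correctly along the two stacked pullback squares and checking compatibility with applying $f_{1,*}$ to $Q_{f_2}$; everything else is the object-wise computation of Step~1 and routine bookkeeping with homotopy fibres.
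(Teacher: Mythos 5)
Your proposal is correct and takes essentially the same approach as the paper's proof. The paper reduces, via the adjunction $f^*\dashv f_*$ applied to the composite~\eqref{eqn:longunitmap}, to the commutativity of a single diagram whose left triangle is exactly your Step~2 (base change is compatible with pasting of pullback squares) and whose remaining part is your Step~1 (the $\eta^\flat$'s compose); both proofs then conclude by the functoriality of horizontal homotopy fibres, which is your Step~3 and is implicit in the paper's phrase ``inducing the desired map $\eta_f$ by taking horizontal homotopy fibres.'' The only cosmetic difference is that you organise the argument as a pasting of the defining squares $Q_{f_1}$ and $f_{1,*}Q_{f_2}$, whereas the paper passes to adjuncts first and presents a single pasted diagram; the ingredients and the logical dependencies are identical.
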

\begin{proof}
By adjointness it suffices to show the commutativity of the outer square of the diagram
\[
\begin{tikzcd}[column sep=1.5em]
f_2^*f_1^*j_*(E_U) \arrow[d, "\text{(b.c.)}"]\arrow[r, "f_2^*\text{(b.c.)}"] & f_2^*j_{1,*}f_{1|U}^*(E_U)\arrow[dl, "\text{(b.c.)}f_{1|U}^*"] \arrow[rr, "f_2^*j_{1,*}\eta_{f_{1|U}}^\flat"] && f_2^*j_{1,*}E_{U_1} \arrow[r, "\text{(b.c.)}"] & j_{2,*}f_{2|U_1}^*(E_{U_1}) \arrow[d, "j_{2,*}\eta_{f_{2|U_1}}^\flat"']\\
j_{2,*}(f_1f_2)_{|U}^* (E_U) \arrow[rrrr, "j_{2,*}\eta_{(f_1f_2)_{|U}}^\flat"] &&&& j_{2,*}(E_{U_2})
\end{tikzcd}
\]
where $j\colon U\hookrightarrow X$, $j_1\colon U_1\hookrightarrow X_1$ and $j_2\colon U_2\hookrightarrow X_2$ are the respective open complements of $Z$, $Z_1$ and $Z_2$ and where $f_{1|U}\colon U_1\to U$, $f_{2|U_1}$ and $(f_1f_2)_{|U}$ are the respective restrictions.
The triangle on the left-hand side commutes as base change morphisms are compatible with composition and the commutativity of the remaining part is seen easily.
\end{proof}

\begin{sect}
Recall, that a Nisnevich local fibrant spectrum $E\in\Spt_{S^1}(S)$ is an \emph{$\AA^1$-Nis\-ne\-vich local fibrant} spectrum if $E(X)\to E(X\times \AA^1)$ is an equivalence for all $X\in\Sm_S$.
\end{sect}

\begin{lemma}\label{lemma:etaA1equivalence}
Let $E\in\Spt_{S^1}(\Sm_S)$ be an $\AA^1$-Nisnevich local fibrant spectrum.
Let $X\in\Sm_S$ be a scheme, $Z\subseteq X$ a closed subset and $\pi\colon \AA^1_X\to X$ the projection.
Then the canonical map (c.f.~\eqref{eqn:relativeeta})
\[
E_{Z/X} \xrightarrow{\eta_\pi} \pi_* E_{\AA^1_Z/\AA^1_X}
\]
is a weak equivalence.
\end{lemma}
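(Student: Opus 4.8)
The statement asserts that for an $\AA^1$-invariant, Nisnevich local fibrant spectrum $E$, the natural map $\eta_\pi \colon E_{Z/X} \to \pi_* E_{\AA^1_Z/\AA^1_X}$ is a weak equivalence, where $\pi\colon \AA^1_X \to X$. The plan is to reduce this statement — which involves supports — to the support-free case, i.e.\ the bare $\AA^1$-invariance hypothesis $E_X \xrightarrow{\sim} \pi_* E_{\AA^1_X}$, by using the defining cofibre sequence of $E_{Z/X}$ together with Lemma~\ref{lemma:commutativeetatriangle}, which organizes the various $\eta$-maps into a commuting triangle.

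\textbf{Step 1: Set up the diagram of cofibre sequences.} Write $j\colon U=(X\setminus Z)\hookrightarrow X$ and let $\tilde\jmath\colon \AA^1_U\hookrightarrow\AA^1_X$ be its base change along $\pi$, with $\pi_U\colon \AA^1_U\to U$ the restriction of $\pi$. By Definition~\ref{defi:E/Zdefi} there is an exact triangle $E_{Z/X}\to E_X\xrightarrow{\eta_j} j_*j^*E_X$ in $\Spt_{S^1}(X_\Nis)$, and likewise $E_{\AA^1_Z/\AA^1_X}\to E_{\AA^1_X}\to \tilde\jmath_*\tilde\jmath^*E_{\AA^1_X}$ in $\Spt_{S^1}((\AA^1_X)_\Nis)$. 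Applying the exact functor $\pi_*$ to the latter and using the commutative square from Construction~\ref{construction:relativeconstruction} (with $f=\pi$), one gets a map of exact triangles
\[
\begin{tikzcd}[column sep=1.6em]
E_{Z/X} \arrow[r]\arrow[d,"\eta_\pi"] & E_X \arrow[r,"\eta_j"]\arrow[d,"\eta_\pi"] & j_*j^*E_X \arrow[d] \\
\pi_* E_{\AA^1_Z/\AA^1_X} \arrow[r] & \pi_* E_{\AA^1_X} \arrow[r] & \pi_*\tilde\jmath_*\tilde\jmath^*E_{\AA^1_X}.
\end{tikzcd}
\]
By the five lemma (for triangulated categories), it suffices to show that the middle and right vertical maps are equivalences.

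\textbf{Step 2: The middle vertical map.} The map $\eta_\pi\colon E_X\to\pi_*E_{\AA^1_X}$ is, object-wise on an \'etale $V\to X$, the map $E(V)\to E(\AA^1_V)=E_{\AA^1_X}(\AA^1_X\times_X V)$ induced by the projection, by Construction~\ref{construction:absoluteconstruction}. This is precisely (evaluated on all \'etale $V/X$) the $\AA^1$-invariance equivalence for $E$; since $E$ is $\AA^1$-Nisnevich local fibrant, $E(V)\to E(\AA^1_V)$ is a weak equivalence for every $V\in\Sm_S$, hence $\eta_\pi$ is an object-wise, thus a Nisnevich-local, equivalence.

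\textbf{Step 3: The right vertical map.} Here I use $\pi_*\tilde\jmath_* = j_*\pi_{U,*}$ (from the base-change square $\tilde\jmath\pi_U = \pi j$ on the level of direct images, since both are restriction functors along commuting site morphisms) together with the base-change identification $\tilde\jmath^*E_{\AA^1_X}\simeq E_{\AA^1_U}$ of Lemma~\ref{easyidentification} (applied to the \'etale-in-disguise situation, or directly since $\tilde\jmath$ is an open immersion) and $j^*E_X\simeq E_U$. Under these identifications the right vertical map becomes $j_*$ applied to $E_U\xrightarrow{\eta_{\pi_U}}\pi_{U,*}E_{\AA^1_U}$, which is again the $\AA^1$-invariance equivalence for $E$, now over $U$, so it is an equivalence; and $j_*$ is exact, hence preserves it. The compatibility of all these identifications with the map $\eta_\pi$ built in Construction~\ref{construction:relativeconstruction} — i.e.\ that the right square above, after the identifications, really is $j_*(\eta_{\pi_U})$ — is exactly the naturality packaged by Lemma~\ref{lemma:commutativeetatriangle} (applied to $\AA^1_U\to U\to\cdot$ versus the open-immersion legs), or can be checked directly by unwinding~\eqref{eqn:longunitmap}.

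\textbf{Main obstacle.} The genuinely delicate point is Step~3: verifying that the square of~\eqref{eqn:relativeeta} identifies, after the base-change isomorphisms $f^*j_*\simeq\tilde\jmath_*\tilde f^*$ and the support-triviality identifications, with $j_*$ of the absolute comparison map over $U$ — in other words, that the bookkeeping of units, base-change morphisms, and the map $\eta_f^\flat$ in~\eqref{eqn:longunitmap} is coherent. Everything else is either a formal consequence of $\AA^1$-invariance (Steps~2 and the heart of Step~3) or a standard diagram chase (the five lemma in Step~1), so the proof really comes down to carefully invoking Lemma~\ref{lemma:commutativeetatriangle} and Lemma~\ref{easyidentification} to trivialize this coherence issue rather than checking it by hand.
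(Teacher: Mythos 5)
Your proposal is correct and follows essentially the same route as the paper: reduce to the two maps on the source and target of the defining fiber sequence, and use $\AA^1$-invariance. The one remark worth making is about your flagged ``main obstacle'' in Step~3: the paper sidesteps the coherence bookkeeping entirely by simply evaluating the second map $j_*j^*E_X \to \pi_*\tilde\jmath_*\tilde\jmath^*E_{\AA^1_X}$ on an object $V\to X$ of $X_\Nis$, where it visibly becomes the projection-induced map $E(V\times_X U)\to E(\AA^1_{V\times_X U})$ (an $\AA^1$-invariance equivalence), rather than identifying it abstractly with $j_*(\eta_{\pi_U})$ via base-change isomorphisms --- this is the alternative you mention at the end of Step~3, and it is the cleaner way to close the argument.
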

\begin{proof}
By construction of the map in question as a homotopy fibre, it suffices to show that the two maps
\[
E_X \xrightarrow{\eta_\pi} \pi_* E_{\AA^1_X} 
\]
and
\[
j_*j^* E_X\to \pi_*\tilde j_*\tilde j^ * E_{\AA^1_X} 
\]
from~\eqref{eqn:longunitmap} are both object-wise weak equivalences, This can be checked directly by evaluation on an object $V\to X$ of the site $X_\Nis$. 
\end{proof}

\begin{lemma}\label{lemma:twosectionscoincide}
Let $E\in\Spt_{S^1}(\Sm_S)$ be an $\AA^1$-Nisnevich local fibrant spectrum.
Let $X\in\Sm_S$ be a scheme and $s\colon X\hookrightarrow \AA^1_X$ a section of the projection $\pi\colon  \AA^1_X \to X$.
Then there is a commutative diagram
\[
\begin{tikzcd}
E_{Z/X}\arrow[r, "\eta_\pi"]  \arrow[dr, equal] &  \pi_*E_{\AA^1_Z/\AA^1_X} \arrow[d, "\pi_*\eta_s"] \\
                                                     &  \pi_*s_*E_{Z/X}
\end{tikzcd}
\]
of weak equivalences.
In particular, for another section $s'\colon X\hookrightarrow \AA^1_X$ of the projection, the morphisms $\pi_*\eta_s$ and $\pi_*\eta_{s'}$ are equal in the homotopy category.
\end{lemma}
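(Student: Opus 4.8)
The plan is to exhibit the two required weak equivalences and then identify the composite. The map $\eta_\pi$ is a weak equivalence by Lemma~\ref{lemma:etaA1equivalence}. For $\pi_*\eta_s$, note that $\pi s = \id_X$, so that $\AA^1_Z/\AA^1_X$ pulls back along $s$ to $Z/X$; applying the $\AA^1$-invariance of $E$ again (now via $\eta_s$ in the form~\eqref{eqn:relativeeta} for the section $s$, which is a closed immersion with retraction $\pi$) shows $\eta_s\colon E_{\AA^1_Z/\AA^1_X}\to s_*E_{Z/X}$ is a weak equivalence: indeed, on an object $V\to X$ of $X_\Nis$ one evaluates and reduces to the statement that $E(\AA^1_V)\to E(V)$ (the restriction along $s$) is an equivalence, which holds because it is a one-sided inverse to the $\AA^1$-equivalence $E(V)\to E(\AA^1_V)$.

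Next I would compute the composite $\pi_*\eta_s\circ\eta_\pi$. By Lemma~\ref{lemma:commutativeetatriangle} applied to the pair of morphisms $X\xrightarrow{s}\AA^1_X\xrightarrow{\pi}X$ (with the closed subset $Z\subseteq X$ and its base changes $\AA^1_Z$ and $s^{-1}(\AA^1_Z)=Z$), the triangle
\[
\begin{tikzcd}
E_{Z/X}\arrow[r, "\eta_\pi"]  \arrow[dr, "\eta_{\pi s}"'] &  \pi_*E_{\AA^1_Z/\AA^1_X} \arrow[d, "\pi_*\eta_s"] \\
                                                     &  \pi_*s_*E_{Z/X}
\end{tikzcd}
\]
commutes. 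But $\pi s=\id_X$, so $\eta_{\pi s}=\eta_{\id_X}$, and by construction (Construction~\ref{construction:relativeconstruction}, whose building block Construction~\ref{construction:absoluteconstruction} sends $\id$ to $\id$) this is the identity of $E_{Z/X}$; functoriality of $(-)_*$ likewise gives $\pi_*s_* = (\pi s)_* = \id$ on the nose. Hence the diagonal is the identity and the diagram in the statement is exactly this commutative triangle, with all three edges weak equivalences.

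Finally, for the last assertion let $s'$ be a second section. Then in the homotopy category both $\pi_*\eta_s$ and $\pi_*\eta_{s'}$ are inverse to the single morphism $\eta_\pi$ (using that $\eta_\pi$ is an isomorphism there), so they agree. The main obstacle, such as it is, is purely bookkeeping: one must make sure that the $\eta$-maps of Construction~\ref{construction:relativeconstruction} really are strictly compatible with composition of morphisms in the way Lemma~\ref{lemma:commutativeetatriangle} asserts, and that $\eta_{\id}=\id$; granting those, everything else is formal. No $\AA^1$-invariance beyond Lemma~\ref{lemma:etaA1equivalence} and the elementary observation about sections is needed.
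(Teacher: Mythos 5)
Your proof follows the paper's one-line argument exactly: factor $\id_X=\pi\circ s$, apply Lemma~\ref{lemma:commutativeetatriangle} (together with the observation $\eta_{\id_X}=\id$ from Constructions~\ref{construction:absoluteconstruction} and~\ref{construction:relativeconstruction}) and Lemma~\ref{lemma:etaA1equivalence}, then conclude by two-out-of-three, with uniqueness of inverses giving the final assertion. One minor remark: your stand-alone argument that $\eta_s$ is a weak equivalence is both redundant (two-out-of-three already yields it) and slightly off target, since $\eta_s$ is a morphism in $\Spt_{S^1}((\AA^1_X)_\Nis)$ and would have to be tested on arbitrary \'etale $W\to\AA^1_X$, not on $V\to X$ in $X_\Nis$; what your evaluation argument actually establishes is that $\pi_*\eta_s$ is an object-wise weak equivalence in $\Spt_{S^1}(X_\Nis)$, which is precisely the statement you need and already have from the triangle.
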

\begin{proof}
This follows from the previous Lemmas~\ref{lemma:commutativeetatriangle} and~\ref{lemma:etaA1equivalence}.
\end{proof}

\begin{lemma}\label{lemma:effaceability}
Let $E\in\Spt_{S^1}(\Sm_S)$ be an $\AA^1$-Nisnevich local fibrant spectrum.
Let $V\in\Sm_S$ and $Z\hookrightarrow \AA^1_V$ a closed subscheme such that $Z\hookrightarrow \AA^1_V\xrightarrow{\pi} V$ is finite.
Let $\bar Z:={\pi(Z)_{\text{red}}}$ be the reduced image.
Then $\codim(\AA^1_{\bar Z},\AA^1_V)= \codim(Z, \AA^1_V)-1$ and the forget support map induces the trivial morphism
\[
 \pi_* E_{Z/\AA^1_V}  \to \pi_*E_{\AA^1_{\bar Z}/\AA^1_V}
\]
in the homotopy category.
\end{lemma}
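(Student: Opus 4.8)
The plan is to reduce the statement to the case handled by the geometric presentation theorem of \cite[Theorem 2.1]{SS16}, which is Nisnevich-local in the base, and then to use $\AA^1$-invariance via Lemma~\ref{lemma:twosectionscoincide} to see that the relevant forget support map factors through a nullhomotopic one. First I would establish the codimension formula $\codim(\AA^1_{\bar Z},\AA^1_V)=\codim(Z,\AA^1_V)-1$: since $Z\to\bar Z$ is finite and surjective, $\dim\OO_{\AA^1_V,\eta_{\bar Z'}}=\dim\OO_{\AA^1_V,\eta_{Z'}}$ for corresponding irreducible components (finite morphisms preserve dimension of local rings at generic points), and passing from $\bar Z$ to $\AA^1_{\bar Z}$ drops the codimension by exactly one by Lemma~\ref{lemma:codimensionsadd} applied to $\AA^1_{\bar Z}\subseteq\AA^1_V$ (both are $\AA^1$ over something, so the codimension is the same as $\codim(\bar Z, V)$, which equals $\codim(Z,\AA^1_V)-1$ since $Z$ maps finitely onto $\bar Z$ and $\codim(\bar Z,V)=\codim(\bar Z\times_V\AA^1_V,\AA^1_V)$, comparing with $Z$).

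For the triviality of the forget support map, the key input is the presentation theorem: Nisnevich-locally on $V$ around any point, one can choose coordinates on $\AA^1_V$ (or rather find an étale neighbourhood) so that $Z$ becomes the pullback of a closed subscheme of $V$ along some section-like map; concretely, after an étale base change $V'\to V$ and an étale map $\AA^1_{V'}\to\AA^1_{V'}$, the closed set $Z'$ corresponding to $Z$ sits inside the image of a section $s\colon V'\hookrightarrow\AA^1_{V'}$ of the projection. Because $Z$ does not meet the special fibre in codimension zero (this is exactly the hypothesis that $Z\to V$ is finite, so $Z$ contains no component of the special fibre of $\AA^1_V$), the hypotheses of \cite[Theorem 2.1]{SS16} are met. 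Then the forget support map $E_{Z/\AA^1_V}\to E_{\AA^1_{\bar Z}/\AA^1_V}$, after this base change and using Lemma~\ref{easyidentification} to compare the spectra, factors as $E_{Z/\AA^1_V}\to E_{s(V)/\AA^1_V}\to E_{\AA^1_{\bar Z}/\AA^1_V}$ (forget support along $Z\subseteq s(V)$ then along $s(V)\subseteq\AA^1_{\bar Z}$), and the composite $E_{s(V)/\AA^1_V}\to E_{\AA^1_{\bar Z}/\AA^1_V}\simeq E_{\AA^1_{\bar Z}/\AA^1_V}$ is nullhomotopic because, by Lemma~\ref{lemma:twosectionscoincide}, the section $s$ can be replaced (up to homotopy, after applying $\pi_*$) by the zero section, which misses $\AA^1_{\bar Z}$ only after a further modification — more precisely, one uses that $\pi_*\eta_s$ is independent of the section, so a section whose image is disjoint from $\AA^1_{\bar Z}$ in the complement witnesses the vanishing.

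The main obstacle is the geometric bookkeeping in combining the presentation theorem with the forget support triangles: the presentation theorem is only Nisnevich-local in $V$ and produces an étale modification of $\AA^1_V$, so one must check that the forget support map of interest is compatible with these base changes (this is exactly what Construction~\ref{construction:relativeconstruction} and Lemma~\ref{lemma:commutativeetatriangle} are for), and that ``trivial in the homotopy category'' can be checked Nisnevich-locally — which holds because $E_{Z/\AA^1_V}$ and the target are (after applying $\pi_*$ and taking homotopy groups) sheaves supported near $\bar Z$, so a map between them is zero iff it is zero on a Nisnevich cover. I would organize the argument as: (i) the codimension count; (ii) reduce to the local situation on $V$; (iii) invoke \cite[Theorem 2.1]{SS16} to put $Z$ inside a section; (iv) apply Lemma~\ref{lemma:twosectionscoincide} to move the section off $\AA^1_{\bar Z}$ and conclude the composite forget support map vanishes; (v) descend the vanishing back to $V$.
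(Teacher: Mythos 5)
Your proposal misidentifies the geometric mechanism. The presentation theorem \cite[Theorem~2.1]{SS16} is \emph{not} used in the proof of Lemma~\ref{lemma:effaceability}; it is used later, in Proposition~\ref{prop: effaceability}, precisely in order to \emph{reduce} a general closed subscheme to the finite-over-$V$ situation of this lemma. Invoking it here is circular in the structure of the argument, and moreover the way you use it is factually wrong: the claim that after a Nisnevich refinement ``$Z$ sits inside the image of a section $s\colon V'\hookrightarrow\AA^1_{V'}$'' is false in general. A finite closed subscheme $Z\subseteq\AA^1_V$ can have arbitrary finite degree over $V$ (e.g.\ $Z=\{0,1\}\subset\AA^1_k$ over $V=\Spec k$), while a section has degree $1$; no étale base change will fix that. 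The presentation theorem only produces the finite-over-$V$ situation, not a section containing $Z$.

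The actual proof compactifies $\AA^1_V$ to $\PP^1_V$. Finiteness of $Z$ over $V$ has two payoffs there: first, $Z$ remains closed in $\PP^1_V$ and misses the section at infinity $\bar s_\infty$, so $\bar s_\infty$ factors through $j''\colon\PP^1_V\setminus Z\hookrightarrow\PP^1_V$; second, excision (Lemma~\ref{lemma:NisnevichFibrancyConditionReformulated}) identifies $\bar\pi_*E_{Z/\PP^1_V}$ with $\pi_*E_{Z/\AA^1_V}$. The role of $\AA^1$-invariance is not to ``move the section off $\AA^1_{\bar Z}$'', but to show via Lemma~\ref{lemma:twosectionscoincide} that the zero-section and the $\infty$-section induce the same map, which is what makes the triangle~\eqref{eqn:firstcommutativetriangle} commute and identifies the forget-support map $\bar\pi_*E_{\PP^1_{\bar Z}/\PP^1_V}\to\pi_*E_{\AA^1_{\bar Z}/\AA^1_V}$ with $\bar\pi_*\eta_{\bar s_\infty}$, which in turn factors through $\bar\pi_*\eta_{j''}$. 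The vanishing then drops out because the composite $\bar\pi_*E_{Z/\PP^1_V}\to\bar\pi_*E_{\PP^1_{\bar Z}/\PP^1_V}\to\bar\pi_*j''_*E_{(\PP^1_{\bar Z}\setminus Z)/(\PP^1_V\setminus Z)}$ is $\bar\pi_*$ of the exact triangle in Lemma~\ref{lemma:forgetsupport}. Your write-up gets the ingredients roughly right (forget-support triangle for the vanishing, $\AA^1$-invariance via Lemma~\ref{lemma:twosectionscoincide}, compatibility of $\eta$-maps via Lemma~\ref{lemma:commutativeetatriangle}) but omits the $\PP^1$-compactification entirely, and that is the decisive geometric step. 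The codimension computation in your sketch is serviceable, though the clean argument is: reduce to $Z$ irreducible and $V$ local with closed point $\bar Z$, observe $\codim(Z,\AA^1_{\bar Z})=1$, and apply Lemma~\ref{lemma:codimensionsadd}.
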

\begin{proof}
Consider the diagram
\[
\begin{tikzcd}
\AA^1_V \arrow[d, "j"', hook]  \arrow[r, "\pi"] &  V \\
\PP^1_V \arrow[ur, "\bar \pi"']           & 
\end{tikzcd}
\]
where the non-vertical maps are the projections and $j$ is the canonical open immersion.
Let us first prove that the triangle
\begin{equation}\label{eqn:firstcommutativetriangle}
\begin{tikzcd}[column sep=large]
\bar \pi_* E_{\PP^1_{\bar Z}/\PP^1_{V}}\arrow[r, "\bar \pi_* \eta_{\bar s_\infty}"]  \arrow[dr, "\bar\pi_*\eta_j"'] &  \bar \pi_*\bar s_{\infty,*} E_{\bar Z/V}=E_{\bar Z/V} \arrow[d, "\eta_{\pi}"] \\
                                                     & \pi_* E_{\AA^1_{\bar Z}/\AA^1_V}
\end{tikzcd}
\end{equation}
commutes in the homotopy category, where $\bar s_\infty\colon V\hookrightarrow\PP_V^1$ is the section at infinity.
Let $\bar s_0\colon V\xhookrightarrow{s_0}\AA^1_V\xrightarrow{j}\PP_V^1$ denote the zero-section.
Since by Lemma~\ref{lemma:twosectionscoincide} the morphism
\[
 \pi_*\eta_{s_0}\colon \pi_* E_{\AA^1_{\bar Z}/\AA^1_V}\to \pi_*s_{0,*} E_{\bar Z/V} = E_{\bar Z/V}
\]
is a weak equivalence, it suffices to show that the outer triangle of the enlarged diagram
\[
\begin{tikzcd}
\bar \pi_* E_{\PP^1_{\tilde Z}/\PP^1_{V}}\arrow[r, "\bar \pi_* \eta_{\bar s_\infty}"] \arrow[ddr, bend right=20, "\bar \pi_* \eta_{\bar s_0}"']  \arrow[dr, "\bar\pi_*\eta_j"'] &  E_{\bar Z/V} \arrow[d, "\eta_{\pi}"] \\
                                                     & \pi_* E_{\AA^1_{\bar Z}/\AA^1_V} \arrow[d, "\pi_*\eta_{s_0}", "\simeq"'] \\
                                                     & E_{\bar Z/V}
\end{tikzcd}
\]
commutes.
Indeed, the bottom triangle is obtained by applying $\bar\pi_*$ to a commutative triangle considered in Lemma~\ref{lemma:commutativeetatriangle} for $\bar s_0=js_0$.
By the same Lemma~\ref{lemma:commutativeetatriangle} applied to $\id=\pi s_0$, the right vertical composition is the identity.
Hence, it suffices to show that $\bar \pi_*\eta_{\bar s_0}= \bar \pi_*\eta_{\bar s_\infty}$ holds in the homotopy category.\\
Since the sections $\bar s_0$ and $\bar s_\infty \colon V\to\PP^1_V$ both factorize through the open immersion $j'\colon \PP^1_{V}\setminus s_1(V)\hookrightarrow \PP_V^1$ via $s_0'$ and $s_\infty'\colon V\to \PP^1_{V}\setminus s_1(V)$, we have a commutative diagram
\[
\begin{tikzcd}
\bar \pi_* E_{\PP^1_{\bar Z}/\PP^1_{V}} \arrow[rr, "\bar \pi_* \eta_{j'}"]\arrow[dr, "\bar\pi_* \eta_{\bar s_0}"']&& \bar\pi_* j'_* E_{(\PP^1_{\bar Z}\setminus s_1(V))/(\PP^1_{V}\setminus s_1(V))}\arrow[dl, "\pi'_* \eta_{s_0'}"]\\
 & E_{\bar Z/V} &
\end{tikzcd}
\]
(and likewise for $\bar s_\infty$ and $s'_\infty$).
Here, $\pi'\colon \PP^1_{V}\setminus s_1(V)\to V$ is the projection.
As $\PP^1_{V}\setminus s_1(V)\cong \AA^1_V$ and $\PP^1_{\bar Z}\setminus s_1(V)\cong \AA^1_{\bar Z}$, we obtain $\pi'_* \eta_{s_0'}=\pi'_* \eta_{s_\infty'}$ by Lemma~\ref{lemma:twosectionscoincide}, thus $\bar\pi_*\eta_{\bar s_0}=\bar\pi_*\bar \eta_{s_\infty}$.
Summing up, this yields the commutativity of diagram~\eqref{eqn:firstcommutativetriangle}.\\
In order to show that the morphism in question
\[
 \pi_* E_{Z/\AA^1_V} = \bar \pi_* j_* E_{Z/\AA^1_V} \to \bar \pi_* j_* E_{\AA^1_{\bar Z}/\AA^1_V} = \pi_*E_{\AA^1_{\bar Z}/\AA^1_V}
\]
is trivial in the homotopy category, we consider the diagram
\[
\begin{tikzcd}
\bar \pi_* j_* E_{Z/\AA^1_V} \arrow[r] &  \bar \pi_* j_* E_{{\AA^1_{\bar Z}}/\AA^1_V} & & \\
                                     &                                 & E_{\bar Z/V} \arrow[ul]& \\
\bar \pi_* E_{Z/\PP^1_V}  \arrow[uu,"\simeq", "\bar \pi_*\eta_j"']  \arrow[r] & \bar \pi_* E_{\PP^1_{\bar Z}/\PP^1_V} \arrow[uu, "\bar \pi_*\eta_j"']  \arrow[ur, "\bar\pi_*\bar s_\infty"] \arrow[rr,"\bar\pi_*\eta_{j''}"] & & \bar \pi_* j''_* E_{(\PP^1_{\bar Z}\setminus Z)/(\PP_V^1\setminus Z)} \arrow[ul, "\bar \pi_* j''_* \eta_{s''_\infty}"']
\end{tikzcd}
\]
where the middle triangle is~\eqref{eqn:firstcommutativetriangle}. The left horizontal maps are induced by the respective forget support maps.
For the right triangle, we note that $\bar s_\infty\colon V\hookrightarrow \PP_V^1$ factorizes through $j''\colon \PP^1_V\setminus Z \hookrightarrow \PP^1_V$ via $s''_\infty\colon V\hookrightarrow \PP^1_V\setminus Z$.
The commutativity of the square on the left-hand side is clear.
The triangle on the right-hand side commutes again by Lemma~\ref{lemma:commutativeetatriangle}.
We observe that the lower horizontal line is given by $\bar\pi_*$ applied to the exact triangle of Lemma~\ref{lemma:forgetsupport}.
In particular, it is an exact triangle itself and therefore the composition is trivial.
Finally, the left vertical arrow is a weak equivalence by the excision  Lemma~\ref{lemma:NisnevichFibrancyConditionReformulated}.
Hence the morphism $\pi_* E_{Z/\AA^1_V}  \to \pi_*E_{\AA^1_{\bar Z}/\AA^1_V}$ in question is trivial in the homotopy category.\\
For the assertion $\codim(\AA^1_{\bar Z},\AA^1_V)= \codim(Z, \AA^1_V)-1$ we can argue component-wise on $Z$ so we may assume that $Z$ is irreducible.
Further, we can replace $\AA_V^1$ by a base change along a flat morphism $V^\prime \rightarrow V$.
In particular, we may assume that $V$ is a local scheme with closed point $\bar{Z}$.
As $Z$ is finite over $\bar{Z}$, it is just a finite union of points in the curve $\AA_{\bar{Z}}^1$.
Thus, $\codim(Z,\AA^1_{\bar Z})=1$ and the assertion follows by Lemma~\ref{lemma:codimensionsadd}.
\end{proof}

\begin{proposition}\label{prop: effaceability}
Let $E\in\Spt_{S^1}(\Sm_S)$ be an $\AA^1$-Nisnevich local fibrant spectrum.
Let $X\in\Sm_S$, $Z\hookrightarrow X$ be a closed subscheme and $x\in X$ be a point.
If $x$ lies in the special fibre $X_\sigma$, assume that $Z_\sigma$ does not contain any connected components of $X_\sigma$.
Then, Nisnevich-locally on $X$ around $x$, there exists a $V\in\Sm_S$, a smooth relative curve $p\colon X\to V$ with $Z$ finite over $V$ and a closed subscheme $Z'\hookrightarrow X$ containing $Z$ such that $\codim(Z',X)=\codim(Z, X)-1$ and the forget support map induces the trivial morphism
\[
 p_* E_{Z/X}  \to p_*E_{Z'/X}
\]
in the homotopy category.
In particular, $E_{Z/X}(X)\to E_{Z'/X}(X)$ is trivial in this case.
\end{proposition}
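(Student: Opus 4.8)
The plan is to reduce, via the mixed-characteristic geometric presentation theorem \cite[Theorem~2.1]{SS16}, to the standard situation $Z_0\hookrightarrow\AA^1_V$ treated in Lemma~\ref{lemma:effaceability}, and then to transport the resulting effaceability statement back to $Z\hookrightarrow X$ by means of the excision Lemma~\ref{lemma:NisnevichFibrancyConditionReformulated} together with the compatibilities of the maps $\eta_f$ from Construction~\ref{construction:relativeconstruction}. Since the hypothesis on $Z_\sigma$ is precisely the one required in \cite[Theorem~2.1]{SS16}, after replacing $X$ by a Nisnevich neighbourhood of $x$ one obtains $V\in\Sm_S$ and an \'etale morphism $\phi\colon X\to\AA^1_V$ with the following property: writing $Z_0\subseteq\AA^1_V$ for the image of $Z$, the restriction $\phi|_Z\colon Z\to Z_0$ is an isomorphism, $Z_0$ is finite over $V$, $\phi^{-1}(Z_0)=Z$ as closed subsets, and the square
\[
\begin{tikzcd}
X\setminus Z \arrow[r]\arrow[d,"\phi"'] & X\arrow[d,"\phi"]\\
\AA^1_V\setminus Z_0 \arrow[r] & \AA^1_V
\end{tikzcd}
\]
is a Nisnevich distinguished square. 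Let $\pi\colon\AA^1_V\to V$ be the projection and $p:=\pi\phi\colon X\to V$; then $p$ is a smooth relative curve and $Z$ is finite over $V$ via $Z\cong Z_0$.

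I would then apply Lemma~\ref{lemma:effaceability} to $Z_0\hookrightarrow\AA^1_V$: with $\bar Z_0:=\pi(Z_0)_{\mathrm{red}}\subseteq V$ one obtains $\codim(\AA^1_{\bar Z_0},\AA^1_V)=\codim(Z_0,\AA^1_V)-1$ together with the triviality of the forget support map $\pi_*E_{Z_0/\AA^1_V}\to\pi_*E_{\AA^1_{\bar Z_0}/\AA^1_V}$ in the homotopy category. Set $Z':=p^{-1}(\bar Z_0)=\phi^{-1}(\AA^1_{\bar Z_0})$; this is a closed subscheme of $X$ containing $Z$, since $Z=\phi^{-1}(Z_0)\subseteq\phi^{-1}(\AA^1_{\bar Z_0})$. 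As $\phi$ is \'etale it preserves the dimension of local rings, so $\codim(Z,X)=\codim(Z_0,\AA^1_V)$ and, after possibly shrinking $X$ around $x$, also $\codim(Z',X)=\codim(\AA^1_{\bar Z_0},\AA^1_V)$; combined with the previous line this yields $\codim(Z',X)=\codim(Z,X)-1$.

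To conclude, I would deduce the triviality as follows. By Lemma~\ref{easyidentification} the \'etale pullback $\phi^*$ identifies $E_{Z_0/\AA^1_V}$ with $E_{Z/X}$ and $E_{\AA^1_{\bar Z_0}/\AA^1_V}$ with $E_{Z'/X}$, compatibly with the forget support maps; moreover, since the square above is Nisnevich distinguished, the unit $E_{Z_0/\AA^1_V}\to\phi_*\phi^*E_{Z_0/\AA^1_V}\simeq\phi_*E_{Z/X}$ is an equivalence by Lemma~\ref{lemma:NisnevichFibrancyConditionReformulated}. Consider the square in $\Spt_{S^1}(V_\Nis)$
\[
\begin{tikzcd}
\pi_*E_{Z_0/\AA^1_V} \arrow[r]\arrow[d,"\simeq"'] & \pi_*E_{\AA^1_{\bar Z_0}/\AA^1_V}\arrow[d]\\
\pi_*\phi_*E_{Z/X}\arrow[r] & \pi_*\phi_*E_{Z'/X}
\end{tikzcd}
\]
whose horizontal maps are induced by forget support and whose vertical maps are induced by the unit of $\phi^*\dashv\phi_*$; it commutes by naturality of the unit and functoriality of forget support. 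Its left vertical map is an equivalence and its top horizontal map is trivial in the homotopy category, hence so is its bottom horizontal map. But the latter is $p_*$ applied to the forget support map $E_{Z/X}\to E_{Z'/X}$, so that map is trivial in the homotopy category of $\Spt_{S^1}(V_\Nis)$. Evaluating at $V\in\Sm_S$ and using $(p_*F)(V)=F(X)$ then shows that $E_{Z/X}(X)\to E_{Z'/X}(X)$ is trivial in the homotopy category of spectra as well.

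The main obstacle I anticipate is not geometric — all the geometric content is delegated to \cite[Theorem~2.1]{SS16} — but bookkeeping: one has to check carefully that the identifications of Lemma~\ref{easyidentification} are natural both for forget support maps and for the units of the \'etale adjunctions, so that the last square really commutes, and — crucially — to observe that excision is needed only for the \emph{source} term $E_{Z_0/\AA^1_V}$, for which the Nisnevich distinguished square from the presentation theorem is available, and \emph{not} for $E_{\AA^1_{\bar Z_0}/\AA^1_V}$, whose pullback square along $\phi$ is in general not Nisnevich distinguished. Besides this, one only uses the \'etale invariance of the dimension of local rings for the codimension assertion.
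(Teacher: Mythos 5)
Your proof follows the same route as the paper: apply \cite[Theorem~2.1]{SS16} to obtain the Nisnevich distinguished square with $Z$ finite over $V$ via $p=\pi\circ\phi$, set $Z'=p^{-1}(\bar Z_0)$, use Lemma~\ref{lemma:effaceability} on $Z_0\hookrightarrow\AA^1_V$ to get triviality of the forget support map there, transport it across the excision equivalence $E_{Z_0/\AA^1_V}\xrightarrow{\simeq}\phi_*E_{Z/X}$ (and only this one, as you correctly note), and conclude by naturality. The only cosmetic difference is in the codimension bookkeeping: the paper simply cites flatness of $f$ and $\pi$ (flat pullback preserves codimension), whereas you argue via \'etale invariance of the dimension of local rings; your hedge ``after possibly shrinking $X$'' is actually unnecessary, since $\phi$ is open and any generic point of $\AA^1_{\bar Z_0}$ generizes a point of $Z_0\subseteq\phi(Z)$ and hence lies in $\phi(X)$, so the paper's more direct flatness argument is cleaner.
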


\begin{proof}
Possibly after shrinking $X$ Nisnevich-locally around $x$, we find a Nisnevich distinguished square
\begin{equation}\label{eqn: gabber representation}
\begin{tikzcd}
X \setminus Z  \arrow[r] \arrow[d] & X  \arrow[d,"f"]\\
\AA^1_V\setminus f(Z) \arrow[r] & \AA^1_V
\end{tikzcd}
\end{equation}
such that $Z\hookrightarrow X\xrightarrow{f} \AA^1_V\xrightarrow{\pi} V$ is finite by \cite[Theorem 2.1]{SS16}.
Let $p\colon X\xrightarrow{f} \AA^1_V\xrightarrow{\pi} V$ denote the composition and set $\bar Z:= p(Z)_{\text{red}}$ and $Z' := p^{-1}(\bar Z)$.
Since $f$ and $\pi$ and hence the composition $p$ is flat, the assertion about the codimensions holds true.
By the excision Lemma~\ref{lemma:NisnevichFibrancyConditionReformulated}, the upper horizontal morphism of the diagram
\[
\begin{tikzcd}
E_{f(Z)/\AA^1_V}  \arrow[r,"\simeq"] \arrow[d] & f_* E_{Z/X}  \arrow[d]\\
E_{\AA^1_{\bar Z}/\AA^1_V} \arrow[r] & f_* E_{Z'/X}
\end{tikzcd}
\]
is an equivalence, where the vertical maps are the respective forget support maps and $f^{-1}f(Z)= Z$.
Application of $\pi_*$ yields the commutative diagram
\[
\begin{tikzcd}
\pi_*E_{f(Z)/\AA^1_V}  \arrow[r,"\simeq"] \arrow[d] & p_* E_{Z/X}\phantom{.}  \arrow[d]\\
\pi_*E_{\AA^1_{\tilde Z}/\AA^1_V} \arrow[r] & p_* E_{\bar Z/X}.
\end{tikzcd}
\]
The left vertical morphism is trivial by the previous Lemma~\ref{lemma:effaceability}.
Hence the right vertical morphism is trivial which proves the claim.
\end{proof}

\begin{sect}
Denote by $X_{x,\eta}^h$ the generic fibre ${\rm Spec}(\mathcal{O}_{X,x}^h \otimes_{\mathfrak{o}}k)$ of the Henselian local scheme at $x$.
Similar to \ref{sect: abuse of stalk notation}, by $E(X_{x,\eta}^h)$ we mean $\colim_{(W,w)} E(W_\eta)$, where $(W,w)$ runs through the Nisnevich neighbourhoods of $x$ and $W_\eta$ is the generic fibre.
\end{sect}

\begin{corollary}\label{cor: effaceability on generic fibre}
Under the assumptions of Proposition \ref{prop: effaceability}, the forget support map
\begin{equation*}
 E_{Z/X}(X_{x,\eta}^h)\to E_{X}(X_{x,\eta}^h)
\end{equation*}
is trivial.
\end{corollary}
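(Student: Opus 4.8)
The plan is to express $X_{x,\eta}^h$ as a filtered colimit of generic fibres of Nisnevich neighbourhoods of $x$, to apply Proposition~\ref{prop: effaceability} \emph{separately} on each such neighbourhood, and to conclude from the fact that a filtered colimit of zero homomorphisms of abelian groups vanishes. Fix $n\in\ZZ$. Since $\pi_*$ commutes with filtered colimits, $E^n_{Z/X}(X_{x,\eta}^h)=\colim_{(W,w)}E^n_{Z/X}(W_\eta)$, the colimit running over the Nisnevich neighbourhoods $(W,w)\to(X,x)$, and by Lemma~\ref{easyidentification} for $W\to X$ we may identify $E^n_{Z/X}(W_\eta)=E^n_{Z_W/W}(W_\eta)$ with $Z_W:=Z\times_X W$, compatibly with forget support. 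It therefore suffices to show, for each $(W,w)$, that the colimit inclusion $E^n_{Z/X}(W_\eta)\to E^n_{Z/X}(X_{x,\eta}^h)$ composed with the forget support map $E^n_{Z/X}(X_{x,\eta}^h)\to E^n_X(X_{x,\eta}^h)$ is zero.

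So fix $(W,w)$; shrinking, we may take $W$ connected, hence irreducible since $W$ is regular over $S$. First I would check that the hypotheses of Proposition~\ref{prop: effaceability} persist for the triple $(W,Z_W,w)$: when $x\in X_\sigma$, so $w\in W_\sigma$, the closed subset $(Z_W)_\sigma$ contains no connected component of $W_\sigma$, because $W_\sigma\to X_\sigma$ is \'etale and $Z_\sigma$ contains no connected component of $X_\sigma$. Applying Proposition~\ref{prop: effaceability} to $(W,Z_W,w)$ then furnishes a connected Nisnevich neighbourhood $W'\to W$ of $w$, a smooth relative curve $p'\colon W'\to V'$, a closed $Z_{W'}:=Z\times_X W'\subseteq Z''\subseteq W'$ with $\codim(Z'',W')=\codim(Z_{W'},W')-1$, and the triviality of $p'_*E_{Z_{W'}/W'}\to p'_*E_{Z''/W'}$ in the homotopy category. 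The next step is to descend this triviality to the open subscheme $W'_\eta\subseteq W'$: as $p'$ is a morphism over $S$ one has $W'_\eta=(p')^{-1}(V'_\eta)$ with $V'_\eta\hookrightarrow V'$ an object of $V'_\Nis$, so evaluating the displayed trivial map of presheaves of spectra on $V'_\eta$ shows that $E_{Z_{W'}/W'}(W'_\eta)\to E_{Z''/W'}(W'_\eta)$ is trivial, in particular zero on $\pi_*$. This is the one place where the pushforward formulation of Proposition~\ref{prop: effaceability}, rather than merely its value on $W'$, is used, since $W'_\eta$ is a \emph{proper} open subscheme of $W'$.

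To finish, Lemma~\ref{lemma:forgetsupport} lets me factor the forget support map $E_{Z_{W'}/W'}\to E_{W'}$ through $E_{Z''/W'}$ (still valid when $\codim(Z'',W')=0$, in which case $Z''=W'$ by irreducibility of $W'$ and the remaining factor is an equivalence). Hence, on $\pi_*$, the composite of the restriction $E^n_{Z/X}(W_\eta)\to E^n_{Z_{W'}/W'}(W'_\eta)$, the zero map $E^n_{Z_{W'}/W'}(W'_\eta)\to E^n_{Z''/W'}(W'_\eta)$, the forget support map $E^n_{Z''/W'}(W'_\eta)\to E^n_{W'}(W'_\eta)=E^n_X(W'_\eta)$, and the colimit inclusion $E^n_X(W'_\eta)\to E^n_X(X_{x,\eta}^h)$ is zero; by naturality of forget support and of the colimit inclusions this composite coincides with the colimit inclusion $E^n_{Z/X}(W_\eta)\to E^n_{Z/X}(X_{x,\eta}^h)$ followed by the forget support map. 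Since $(W,w)$ was arbitrary and these inclusions are jointly surjective onto $E^n_{Z/X}(X_{x,\eta}^h)$, the forget support map $E^n_{Z/X}(X_{x,\eta}^h)\to E^n_X(X_{x,\eta}^h)$ is zero for all $n$, which is the assertion.

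I expect the main obstacle to be the interchange of the \emph{Nisnevich-local} quantifier in Proposition~\ref{prop: effaceability} with the stalk-type colimit defining $X_{x,\eta}^h$: the auxiliary curve $p'$ and the enlarged subset $Z''$ depend on the chosen neighbourhood, so one cannot shrink $X$ once and for all but must re-run the proposition on each Nisnevich neighbourhood; the secondary difficulty, which forces the use of the $p'_*$-version of the proposition, is the passage from $W'$ to its proper open subscheme $W'_\eta$.
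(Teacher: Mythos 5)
Your proof is correct and takes essentially the same approach as the paper: write $E^n_{Z/X}(X_{x,\eta}^h)$ as a filtered colimit over Nisnevich neighbourhoods $(W,w)$ of $x$, annihilate the forget support map on each term via the effaceability Proposition~\ref{prop: effaceability}, and conclude since a filtered colimit of zero maps is zero. The only real variation is how the generic-fibre triviality is obtained: the paper restricts the Gabber presentation of \cite[Theorem~2.1]{SS16} to the generic fibre and re-runs the argument of Proposition~\ref{prop: effaceability} there, whereas you treat that proposition as a black box and evaluate its $p'_\ast$-formulation at the open $V'_\eta \in V'_\Nis$ (using $W'_\eta=(p')^{-1}(V'_\eta)$), which is a slightly more formal route to the same intermediate statement.
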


\begin{proof}
By \cite[Theorem 2.1]{SS16}, there is a cofinal family of Nisnevich neighbourhoods $(W,w)$ of $x$ admitting a Nisnevich distinguished square of the form \eqref{eqn: gabber representation} with the additional finiteness assumption.
We even claim that for such neighbourhoods $(W,w)$, the forget support map $E_{Z/X}(W_\eta)\to E_{X}(W_\eta)$ is trivial.
To show this, we may assume $W=X$, \ie, we assume $X$ admits a Nisnevich distinguished square as in \eqref{eqn: gabber representation} with $Z/V$ finite.
On the generic fibres, we still have a distinguished square
\begin{equation*}
\begin{tikzcd}
X_\eta \setminus Z_\eta  \arrow[r] \arrow[d] & X_\eta  \arrow[d,"f_\eta"]\\
\AA^1_{V_\eta}\setminus f_\eta(Z_\eta) \arrow[r] & \AA^1_{V_\eta}
\end{tikzcd}
\end{equation*}
and as pullback, $Z_\eta/V_\eta$ is still finite.
Accordingly, the arguments in the proof of Proposition \ref{prop: effaceability} go through for $Z_\eta \subseteq X_\eta$, as well.
In particular, the forget support map $E_{Z/X}(X_\eta)\to E_{X}(X_\eta)$ is indeed trivial.
\end{proof}

\begin{theorem}\label{thm: Bloch-Ogus abstract nonsense}
Let $S$ be a Dedekind scheme with only infinite residue fields.
Moreover, let $E\in\Spt_{S^1}(\Sm_S)$ be an $\AA^1$-Nisnevich local fibrant spectrum and $X\in\Sm_S$ of dimension $d$.
The complex
\begin{multline*}
0\to (E_X^{n})^\sim \xrightarrow{\tilde e} \bigoplus_{z\in X^{(0)}} \mathfrak{z}_*\mathfrak{z}^*  E_{Z/X}^n \xrightarrow{d^{0}} \bigoplus_{z\in X^{(1)}} \mathfrak{z}_*\mathfrak{z}^*  E_{Z/X}^{n+1} \xrightarrow{d^{1}} \cdots \\
\cdots \xrightarrow{d^{d-2}} \bigoplus_{z\in X^{(d-1)}} \mathfrak{z}_*\mathfrak{z}^*  E^{n+d-1}_{Z/X} \xrightarrow{d^{d-1}} \bigoplus_{z\in X^{(d)}} \mathfrak{z}_*\mathfrak{z}^*  E^{n+d}_{Z/X} \to 0
\end{multline*}
is exact, possible except at the spots $(E_X^{n})^\sim$ and $\bigoplus_{z\in X^{(1)}} \mathfrak{z}_*\mathfrak{z}^*  E_{Z/X}^{n+1}$.
Moreover, if for each point $x$ of $X$ the forget support map for the special fibre
 \[
  E_{X_\sigma/X}(X_x^h)\to E_{X}(X_x^h)
 \]
is trivial,
then it is exact everywhere and thus a resolution of $(E_X^{n})^\sim$ by flabby Nisnevich sheaves.
In this case, we have
\[
H^p(Y, (E_X^{n})^\sim) \cong H^p(\mathcal{G}^\bullet(E,n)(Y)).
\]
for $Y\in X_\Nis$ which vanishes for $p>d$.
\end{theorem}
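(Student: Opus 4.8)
The plan is to derive both assertions from Proposition~\ref{proposition:gerstencomplex} and the effaceability Proposition~\ref{prop: effaceability}, after a reduction to the situation of Section~\ref{section:effaceability}. Exactness of the displayed complex is a statement about a complex of Nisnevich sheaves on $X_\Nis$ and may therefore be checked on Nisnevich stalks, i.e.\ on the Henselian local schemes $X_x^h = \Spec(\mathcal O_{X,x}^h)$. If $x$ lies over the generic point of $S$, one is locally smooth over a field and the result of \cite{CTHK97} applies directly (giving exactness everywhere, since over a field effaceability is unconditional); if $x$ lies over a closed point $s$ of $S$, then $\mathcal O_{S,s}^h$ is a Henselian discrete valuation ring with infinite residue field, $X_x^h$ is the Henselization of the smooth $\mathcal O_{S,s}^h$-scheme $X\times_S\Spec(\mathcal O_{S,s}^h)$ at a point over $x$, and the pullback of $E$ is again $\AA^1$-Nisnevich local fibrant by the base-change results of Section~\ref{Section:preliminaries}. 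So we may assume $S=\Spec(\mathfrak o)$. By Corollary~\ref{corollary:howthecofibreslook} the displayed complex is the sheafification of the complex of Proposition~\ref{proposition:gerstencomplex}, so it suffices to verify the exactness conditions listed there.

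For the first assertion I would note that conditions \emph{(2)(b)} (for $s>1$) and \emph{(2)(c)} (for all $s\ge 0$), which together govern exactness at $\mathcal G^0$ and at each $\mathcal G^s$ with $s\ge 2$, only involve closed subsets $Z\subseteq X$ with $\codim(Z,X)\ge 2$. For such $Z$ the special fibre $Z_\sigma$ cannot contain a connected component of $X_\sigma$, since the components of $X_\sigma$ have codimension $1$; hence Proposition~\ref{prop: effaceability} applies and, Nisnevich-locally around any point, produces $Z\subseteq Z'\subseteq X$ with $\codim(Z',X)=\codim(Z,X)-1$ and $E_{Z/X}(X)\to E_{Z'/X}(X)$ trivial. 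This verifies those conditions, so the complex is exact everywhere except possibly at $(E_X^n)^\sim$ and at $\mathcal G^1$: the conditions \emph{(1)} and \emph{(2)(a)} governing those two spots require triviality of the forget-support map for \emph{all} $Z$ of codimension $\ge 1$, in particular for those containing a component of $X_\sigma$, which are exactly the ones not covered by Proposition~\ref{prop: effaceability}.

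For the second assertion I would, under the extra hypothesis, verify conditions \emph{(1)} and \emph{(2)(a)}, i.e.\ triviality of $E^m_{Z/X}(X_x^h)\to E^m_X(X_x^h)$ for every point $x$, every $m$, and every closed $Z$ with $\codim(Z,X)\ge 1$. Shrinking $X$ Nisnevich-locally around $x$ we may assume $X_\sigma$ is irreducible (the special fibre of the regular local scheme $X_x^h$ is irreducible), and since the closed subsets containing $X_\sigma$ are cofinal among those of codimension $\ge 1$ in the colimit computing $E_{X^{(1)}}$ (Lemma~\ref{lemma:codimensionneq2}) and forget-support maps compose, it suffices to treat $Z=X_\sigma\cup W$ where $W$ is the union of the components of $Z$ other than $X_\sigma$; then $W_\sigma$ contains no component of $X_\sigma$ and $\codim(W\cap X_\sigma,X)\ge 2$. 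Now $W$ is covered by Proposition~\ref{prop: effaceability} (for $\codim(W,X)=1$ the resulting $Z'$ is all of $X$, otherwise $\codim(Z',X)\ge 1$), so $E^m_{W/X}(X_x^h)\to E^m_X(X_x^h)$ is trivial, while for $X_\sigma$ itself (hence for $W\cap X_\sigma\subseteq X_\sigma$) it is trivial by hypothesis. To combine these I would use the cofibre sequence of Lemma~\ref{lemma:forgetsupport} for $W\subseteq X_\sigma\cup W$,
\[
E_{W/X}\to E_{X_\sigma\cup W/X}\to j_{W,*}\,E_{(X\setminus W)_\sigma/(X\setminus W)},\qquad j_W\colon X\setminus W\hookrightarrow X,
\]
whose last term is exactly the special-fibre term for the open subscheme $X\setminus W$; together with the compatibility of forget-support maps with the open restriction $j_W$ (triviality of $\phi_W$ makes $E^m_X(X_x^h)\to E^m_{X\setminus W}((X_x^h)\setminus W)$ injective), this reduces the assertion for $Z=X_\sigma\cup W$ to the hypothesis applied to $X\setminus W$ and its local schemes. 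The main obstacle is exactly this gluing step: since effaceability is not available for $X_\sigma$, one must patch the triviality produced by Proposition~\ref{prop: effaceability} in the ``horizontal'' direction with the triviality produced by the hypothesis in the ``vertical'' direction, which requires bookkeeping with the colimits defining the various stalks and the Mayer--Vietoris/localization long exact sequences.

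Finally, once exactness everywhere is established, the complex $0\to(E_X^n)^\sim\to\mathcal G^0(E,n)\to\cdots\to\mathcal G^d(E,n)\to 0$ is a resolution of $(E_X^n)^\sim$ by flabby Nisnevich sheaves (flabbiness by Corollary~\ref{corollary:sumisflabby}); since flabby sheaves are acyclic this resolution computes cohomology, giving $H^p(Y,(E_X^n)^\sim)\cong H^p(\mathcal G^\bullet(E,n)(Y))$ for $Y\in X_\Nis$, and this vanishes for $p>d$ because $\mathcal G^\bullet(E,n)$ is concentrated in degrees $0,\dots,d$.
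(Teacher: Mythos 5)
Your reduction to a henselian discrete valuation base, and your treatment of the first assertion, are correct and agree with the paper: conditions (2)(b) and (2)(c) of Proposition~\ref{proposition:gerstencomplex} only involve closed subsets of codimension $\geq 2$, which automatically cannot contain a component of $X_\sigma$ (since those components have codimension $1$), so Proposition~\ref{prop: effaceability} applies and yields exactness outside $(E_X^n)^\sim$ and $\mathcal{G}^1$. The flabbiness and the vanishing for $p>d$ at the end are also right.

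The gap is in the second assertion, and you essentially flag it yourself. After decomposing $Z = X_\sigma \cup W$ with $X_\sigma \not\subseteq W$, you invoke the forget-support triangle of Lemma~\ref{lemma:forgetsupport} for the pair $W \subseteq Z$ and try to ``patch'' the triviality supplied by Proposition~\ref{prop: effaceability} (for $W$) with the triviality supplied by the hypothesis (for $X_\sigma$). The problem is that the resulting cofibre term is $j_{W,*}E_{(X\setminus W)_\sigma/(X\setminus W)}$ and you would need the forget-support map for the special fibre of the \emph{open} subscheme $X\setminus W$ to vanish on $X\setminus W$ itself. The hypothesis only gives vanishing on the henselian local schemes $X_x^h$ (stalks), not on the non-local scheme $X\setminus W$; the commutative square you set up gives $j_W^*\circ\phi_Z = \psi\circ b$, but without $\psi=0$ (or $\psi$ trivial on $\mathrm{im}\,b$) the injectivity of $j_W^*$ alone does not let you conclude $\phi_Z=0$. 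Your remark that this ``requires bookkeeping'' is the point at which the argument is actually incomplete.

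The paper avoids this entirely with a short retraction argument (after replacing $X$ by $X_x^h$, with $Z_1 = X_\sigma$, $Z_2$ not containing $X_\sigma$, $U_1 = X\setminus Z_1 = X_\eta$, and $U = X\setminus Z = U_{2,\eta}$). The two localization triangles $E_{Z_1/X}(X)\to E_X(X)\to E_X(U_1)$ and $E_{Z_2/X}(U_1)\to E_X(U_1)\to E_X(U)$ have trivial forget-support maps — the first by the hypothesis, the second by Corollary~\ref{cor: effaceability on generic fibre} — so the restriction maps $E_X(X)\to E_X(U_1)$ and $E_X(U_1)\to E_X(U)$ admit retractions $r_1$ and $r_2$. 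Then $r_1\circ r_2$ is a retraction of $E_X(X)\to E_X(U)$, hence a split monomorphism, and in the triangle $E_{Z/X}(X)\to E_X(X)\to E_X(U)$ this forces the forget-support map $E_{Z/X}(X)\to E_X(X)$ to be zero. The key point your approach misses is that one should work along the chain $X\supseteq U_1\supseteq U$ so that \emph{both} effaceability inputs are evaluated either on the whole henselian local scheme or on its generic fibre — places where the hypothesis and Corollary~\ref{cor: effaceability on generic fibre} genuinely apply — rather than on an arbitrary open subscheme $X\setminus W$.
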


\begin{proof}
Since exactness is checked stalk-wise and we can compute the stalk at a point $x\in X$ after henselization of the local scheme obtained from $S$ at the image of $x$, we may assume, that $S$ is the spectrum of a Henselian discrete valuation ring with infinite residue field.
Now the first result follows from Proposition~\ref{proposition:gerstencomplex} and Proposition~\ref{prop: effaceability}.
\\
Suppose the forget support maps $E_{X_\sigma/X}(X_x^h)\to E_{X}(X_x^h)$ are trivial for all points $x$.
By our assumtion and Propositions~\ref{proposition:gerstencomplex} and \ref{prop: effaceability}, it is enough so show that the forget support map $E^n_{Z/X}(X_x^h)\to E^n_{X}(X_x^h)$ is trivial for closed subsets $X_\sigma \subsetneq Z \subsetneq X$.
We may replace $X$ by the Henselian local scheme $X_x^h$.
Write $Z = Z_1 \cup Z_2$ with $Z_1 = X_\sigma$ and $X_\sigma \nsubseteq Z_2$.
Let $U = X \setminus Z$ and $U_i = X \setminus Z_i$ be the respective open complements.
Observe that $U_1 = X_\eta$ and $U = U_{2,\eta}$ are just the generic fibres.
Consider the exact triangles
\[
E_{Z_2/X}(U_1) \to E_X(U_1) \to E_X(U)\phantom{.}
\]
and
\[
E_{Z_1/X}(X) \to E_X(X) \to E_X(U_1).
\]
By our assumption, the forget support map in the latter triangle is trivial, so the restriction map $E_X(X) \to E_X(U_1)$ admits a retraction $r_1$.
By Corollary \ref{cor: effaceability on generic fibre}, the forget support map in the former triangle is trivial, so the restriction map $E_X(U_1) \to E_X(U)$ admits a retraction $r_2$.
Set $r := r_1 \circ r_2\colon E_X(U) \to E_X(X)$.
By construction, $r$ is a retraction of the restriction map $E_X(X) \to E_X(U)$.
Thus, using the exact triangle
\[
E_{Z/X}(X) \to E_X(X) \to E_X(U),
\]
we get that the forget support map $E_{Z/X}(X) \to E_X(X)$ is indeed trivial.
\end{proof}

\section{A Bloch-Ogus theorem for \'{e}tale cohomology} 
\label{section:blochogus}
In this section we want to apply Theorem \ref{thm: Bloch-Ogus abstract nonsense} to \'{e}tale cohomology.
Let us first fix the situation:

\begin{sect}\label{para: situation etale Bloch-Ogus}
We are in the situation of~\ref{sect:hypothesisonS}.
For the whole section, we fix an essentially smooth scheme $X / S$, connected and of finite dimension.
Let us denote the structural morphism by $p_X\colon X\rightarrow S$.
We fix a coefficient group $\Lambda := \mathbb{Z}/m$ for an integer $m>0$ prime to $p$.
We work in the derived category $\mathcal{D}_c^b(X_{\rm et},\Lambda)$ of bounded (above and below) complexes all of whose cohomology sheaves are constructible sheaves of $\Lambda$-modules.
By an l.c.c.~complex $K^\bullet$, we mean a complexes $K^\bullet \in \mathcal{D}_c^b(X_{\rm et},\Lambda)$ with locally constant cohomology sheaves ${\rm H}^q(K^\bullet)$ for all $q$.
\end{sect}

\begin{sect}\label{para: Eilenberg-MacLane spectrum}
Let $\varepsilon\colon X_{\rm et} \rightarrow X_{\rm Nis}$ be the canonical morphism of sites.
Note that $\mathbb{R}\Gamma(X_{\rm et},-)  \simeq \mathbb{R}\Gamma(X_{\rm Nis}, \mathbb{R}\varepsilon_\ast (-))$.
By abuse of notation, let us denote by $\varepsilon$ also the corresponding morphism $\Sm_{S,{\rm et}} \rightarrow \Sm_{S,{\rm Nis}}$ of the smooth sites.
For an l.c.c.~complex $K^\bullet$ in $\mathcal{D}_c^b(S_{\rm et},\Lambda)$, we denote by $K^\bullet$ also the complex in $\mathcal{D}^b(\Sm_{S,{\rm et}},\Lambda)$ that restricts to $p_X^\ast K^\bullet$ on each small site $X_{\rm et}$.
Further, we fix a Nisnevich local fibrant spectrum $E(K^\bullet)\in\Spt_{S^1}(\Sm_S)$ corresponding to $\mathbb{R}\varepsilon_\ast K^\bullet$ under the Dold--Kan correspondence.
\end{sect}

\begin{lemma}\label{lem: A1 local}
The spectrum $E(K^\bullet)$ is $\AA^1$-local.
\end{lemma}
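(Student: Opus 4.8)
The statement is that the Eilenberg--MacLane spectrum $E(K^\bullet)$ attached to the complex $\mathbb{R}\varepsilon_\ast K^\bullet$ is $\AA^1$-local, i.e.\ that $E(K^\bullet)(X)\to E(K^\bullet)(X\times\AA^1)$ is a weak equivalence for every $X\in\Sm_S$. Unravelling the Dold--Kan correspondence, this amounts to showing that for every $X\in\Sm_S$ and every $q$ the restriction map
\[
 \mathrm{H}^q(X_{\mathrm{et}}, p_X^\ast K^\bullet) \longrightarrow \mathrm{H}^q((X\times\AA^1)_{\mathrm{et}}, p_{X\times\AA^1}^\ast K^\bullet)
\]
is an isomorphism. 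Indeed, $\mathbb{R}\Gamma(X_{\mathrm{et}},-)\simeq \mathbb{R}\Gamma(X_{\mathrm{Nis}},\mathbb{R}\varepsilon_\ast(-))$, so the homotopy groups of $E(K^\bullet)(X)$ are exactly the \'etale hypercohomology groups of $p_X^\ast K^\bullet$, and likewise over $X\times\AA^1$; the map induced by the projection $\pi\colon X\times\AA^1\to X$ on these groups is the one above.

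\textbf{Key steps.} First I would reduce to the case of a single locally constant constructible sheaf in place of the complex $K^\bullet$: since $K^\bullet$ is bounded with l.c.c.\ cohomology sheaves, a standard hypercohomology spectral sequence (or d\'evissage along the truncation triangles) reduces the claim for $K^\bullet$ to the claim for each ${\rm H}^q(K^\bullet)[0]$, which is an l.c.c.\ sheaf of $\Lambda$-modules on $S_{\mathrm{et}}$, hence also on $X_{\mathrm{et}}$ after pullback. Second, since $\pi\colon X\times\AA^1\to X$ is the projection and $p_X^\ast K$ pulls back to $\pi^\ast p_X^\ast K = p_{X\times\AA^1}^\ast K$, the base-change/projection formula identifies $\mathbb{R}\Gamma((X\times\AA^1)_{\mathrm{et}}, p_{X\times\AA^1}^\ast K)$ with $\mathbb{R}\Gamma(X_{\mathrm{et}}, p_X^\ast K\otimes^{\mathbb{L}}\mathbb{R}\pi_\ast\Lambda)$, so it suffices to show that the unit map $\Lambda_X\to \mathbb{R}\pi_\ast\Lambda_{X\times\AA^1}$ is an isomorphism in $\mathcal{D}(X_{\mathrm{et}},\Lambda)$. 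Third, this last statement is $\AA^1$-homotopy invariance of \'etale cohomology with torsion coefficients prime to the residual characteristics: by proper base change it reduces to the stalks, i.e.\ to the computation $\mathrm{H}^\ast(\AA^1_{\bar s},\Lambda)=\Lambda$ concentrated in degree $0$ for every geometric point $\bar s$ of $X$ (whose residue characteristic is either $0$ or $p$, and $m$ is prime to $p$). This is classical (SGA~4, or Milne's book); it is where the hypothesis $m$ prime to $p$ — which is part of the standing assumptions of Section~\ref{section:blochogus} via \ref{sect:hypothesisonS} and~\ref{para: situation etale Bloch-Ogus} — enters.

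\textbf{Main obstacle.} Strictly speaking there is no deep obstacle: the heart of the matter is homotopy invariance of torsion \'etale cohomology, which is available off the shelf. The only point requiring a little care is bookkeeping the mixed-characteristic situation over the Dedekind base $S$ — one must make sure $\AA^1$-invariance is applied over a \emph{base scheme} $S$ with residue characteristics coprime to $m$ rather than over a field, so that proper (or smooth) base change and the geometric-stalk computation are legitimate; this is exactly guaranteed by the running hypotheses on $\mathfrak{o}$ and $m$. A secondary bit of care is that $E(K^\bullet)$ was only assumed Nisnevich local fibrant, so one should phrase the conclusion purely in terms of the object-wise values $E(K^\bullet)(X)$, which by construction compute $\mathbb{R}\Gamma(X_{\mathrm{et}},p_X^\ast K^\bullet)$, and invoke the definition of $\AA^1$-Nisnevich local fibrancy recalled just before Lemma~\ref{lemma:etaA1equivalence}.
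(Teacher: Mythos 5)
Your argument is correct and follows essentially the same route as the paper: both reduce the claim, via Dold--Kan and the identification $\mathbb{R}\Gamma(X_{\rm et},-)\simeq\mathbb{R}\Gamma(X_{\rm Nis},\mathbb{R}\varepsilon_\ast(-))$, to $\AA^1$-homotopy invariance of \'etale cohomology with torsion coefficients prime to the residue characteristics, i.e.\ to the fact that $p_X^\ast K^\bullet \to \mathbb{R}\pi_\ast\pi^\ast p_X^\ast K^\bullet$ is a quasi-isomorphism. The paper simply cites \cite[Corollary~7.7.4]{Fu11} for this, whereas you sketch its proof by d\'evissage to a single l.c.c.\ sheaf, the projection formula, and a stalk computation. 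One small slip worth noting: since $\pi\colon\AA^1_X\to X$ is not proper, the reduction to geometric stalks uses \emph{smooth} base change (or the acyclicity of $\AA^1$ over a strictly henselian ring), not proper base change; this is precisely where the hypothesis that $m$ is prime to $p$ enters.
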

\begin{proof}
Indeed, the projection $\pi\colon\mathbb{A}_X^1 \rightarrow X$ induces a quasi-isomorphism $p_X^\ast K^\bullet \rightarrow \mathbb{R}\pi_\ast\pi^\ast p_X^\ast K^\bullet$ (\eg~\cite[Corollary~7.7.4]{Fu11}) and hence a quasi-iso{\-}mor{\-}phism on cohomology
\begin{equation*}
  \mathbb{R}\Gamma(X_{\rm Nis}, \mathbb{R}\varepsilon_\ast p_X^\ast K^\bullet) \to
  \mathbb{R}\Gamma(\mathbb{A}_{X,{\rm Nis}}^1, \mathbb{R}\varepsilon_\ast p_{\mathbb{A}^1_X}^\ast K^\bullet).
\end{equation*}
Under the Dold--Kan correspondence this translates to our claim.
\end{proof}

\begin{sect}\label{para: what to do 1}
In order to apply Theorem \ref{thm: Bloch-Ogus abstract nonsense} to the  $\AA^1$-local spectrum $E(K^\bullet)$, we need to show that the forget support maps
\begin{equation*}
 E(K^\bullet)_{X_\sigma/X}(X_x^h) \to E(K^\bullet)_X(X_x^h)
\end{equation*}
vanish for all points $x$ in $X$.
Unravelling the definitions, these maps are just the forget support maps
\begin{equation*}
 \mathbb{R}\Gamma_{X_{x,\sigma}^h}(X_{x,{\rm et}}^h,p_X^\ast K^\bullet) \to \mathbb{R}\Gamma(X_{x,{\rm et}}^h,p_X^\ast K^\bullet)
\end{equation*}
of \'{e}tale cohomology.
\end{sect}

In the following, we will make use of Gabber's absolute purity theorem -- but not in its full strength.
The following easy special case will be sufficient for our cause:

\begin{lemma}\label{lem: absolute purity}
In the situation of \ref{para: situation etale Bloch-Ogus}, let $i\colon Z \hookrightarrow X$ be a closed subscheme of codimension $c$, contained in the special fibre of $X/S$.
Assume $Z/\mathbb{F}$ is smooth and connected.
Then the canonical morphism $\mathbb{R}i^!K^\bullet\vert_X \rightarrow K^\bullet\vert_Z(-c)[-2c]$ is a quasi-isomorphism for all l.c.c.~complexes $K^\bullet \in \mathcal{D}_b^c(S_{\rm et},\Lambda)$.
\end{lemma}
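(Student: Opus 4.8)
The plan is to reduce the statement to Gabber's absolute purity theorem in the form that is available in the literature for constant coefficients $\Lambda=\mathbb{Z}/m$, and then bootstrap to l.c.c.\ complexes. First I would observe that the assertion is \'etale-local on $Z$ and on $X$, so after shrinking we may assume $K^\bullet$ has constant cohomology sheaves; by the standard d\'evissage along the truncation triangles $\tau_{\leq q}K^\bullet \to K^\bullet \to \tau_{>q}K^\bullet$ and the fact that $\mathbb{R}i^!$ and $(-)\vert_Z(-c)[-2c]$ are both exact functors of triangulated categories, it suffices to treat a single constant sheaf $K^\bullet = \Lambda$ placed in degree zero, or more precisely $K^\bullet = \boldsymbol{\mu}_m^{\otimes r}$ for varying $r$ after a further twist (twisting commutes with $\mathbb{R}i^!$). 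Here I am using that $\mathbb{F}$, and hence $\mathbb{Z}$, contains enough roots of unity after an \'etale base change, so locally $\Lambda$ and $\boldsymbol{\mu}_m^{\otimes r}$ differ only by an \'etale-locally constant isomorphism.

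Next I would record why the geometry is favourable: $Z \hookrightarrow X$ is a closed immersion of \emph{regular} schemes — $X/S$ is essentially smooth over the regular one-dimensional base $S$, and $Z/\mathbb{F}$ is smooth, hence $Z$ is regular as well — and it is of pure codimension $c$, with $m$ invertible on $X$ since $m$ is prime to $p=\mathrm{char}(\mathbb{F})$ and the generic characteristic is zero or also prime to $m$ by hypothesis. This is exactly the setting of Gabber's absolute purity theorem (see Fujiwara's write-up, or \cite{Fu11}): the purity map $\mathbb{R}i^!\Lambda \to \Lambda_Z(-c)[-2c]$ is an isomorphism in $\mathcal{D}_c^b(Z_{\rm et},\Lambda)$. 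In fact, as the statement of the lemma emphasises, only the very special case of a smooth closed subscheme of the smooth ambient scheme $X$ contained in the special fibre is needed; in that case the ideal sheaf is locally generated by a regular sequence whose classes span the conormal bundle, and one can even deduce purity from the smooth base-change / relative purity theorem for $Z \hookrightarrow X$ by factoring through the normal cone or by using that $Z \hookrightarrow X$ is, \'etale-locally on $X$, the zero locus of $c$ coordinate functions — reducing to the case $Z = \mathbb{A}^{\dim X - c}_{\mathbb{F}}$ inside $X$, where purity is classical.

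The last step is to upgrade from the constant/Tate-twisted case to an arbitrary l.c.c.\ complex: since $K^\bullet$ is l.c.c., \'etale-locally on $X$ it becomes a finite complex of finite free $\Lambda$-modules, i.e.\ a finite iterated extension of shifted copies of $\boldsymbol{\mu}_m^{\otimes r_j}$'s (after adjoining roots of unity), so the already-established case together with the five lemma applied along the cofibre triangles gives the general statement; the formation of the canonical morphism $\mathbb{R}i^!K^\bullet\vert_X \to K^\bullet\vert_Z(-c)[-2c]$ is functorial and compatible with these triangles, so there is no coherence issue. The main obstacle is essentially bookkeeping rather than mathematics: making sure the purity isomorphism one invokes is exactly the \emph{canonical} comparison map (the one adjoint to the cycle class), so that the d\'evissage glues; but since $\mathbb{R}i^!$, restriction, shift and twist are all triangulated functors and the comparison map is natural in $K^\bullet$, this is routine, and the genuinely hard input — Gabber's purity — is quoted. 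Finally I would note that in the special geometric situation at hand one can dispense with the full strength of Gabber's theorem, as remarked in Remark~\ref{remark:avoidpurity}.
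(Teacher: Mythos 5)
Your primary route is correct but genuinely heavier than the paper's, and your sketched ``elementary'' alternative misses the actual mechanism.

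You reduce to constant/Tate-twisted coefficients by d\'evissage and then invoke Gabber's absolute purity theorem for the closed immersion of regular schemes $Z\hookrightarrow X$ of codimension $c$ with $m$ invertible; that is a valid proof, since $X$ is regular (essentially smooth over the regular scheme $S$) and $Z$ is regular (smooth over $\mathbb{F}$). But the paper deliberately avoids Gabber's theorem here -- the whole point of the sentence preceding the lemma (``but not in its full strength'') is to give an elementary argument. The paper's proof factors $p_X\circ i = \sigma\circ p_Z$ through the square
\[
\begin{tikzcd}
Z \arrow[r, hook, "i"] \arrow[d, "p_Z"] & X\arrow[d, "p_X"]\\
\Spec(\mathbb{F}) \arrow[r, hook, "\sigma"] & S
\end{tikzcd}
\]
and then chains together: Poincar\'e duality for the smooth morphism $p_X$ of relative dimension $m$, giving $\mathbb{R}p_X^! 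K^\bullet \simeq p_X^\ast K^\bullet(m)[2m]$; the trivial codimension-one purity $\mathbb{R}\sigma^! K^\bullet \simeq \sigma^\ast K^\bullet(-1)[-2]$ for the closed point of the DVR (an easy exercise, not Gabber); and Poincar\'e duality for the smooth morphism $p_Z$ of relative dimension $n$. Since $c=m-n+1$, these three identifications compose to the stated quasi-isomorphism, and they all hold for l.c.c.\ complexes directly, so no d\'evissage to constant coefficients is needed. Your secondary route gestures at something similar (``reducing to $Z=\mathbb{A}^{\dim X - c}_{\mathbb{F}}$ inside $X$, where purity is classical'') but this is not quite right: classical SGA~4 relative purity is for a closed immersion of schemes smooth over a \emph{common} base, whereas here $Z$ is smooth over $\mathbb{F}$ and $X$ is smooth over $S$, so there is no common base and the ``classical'' statement does not apply as stated. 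The factoring through $\sigma\colon\Spec(\mathbb{F})\hookrightarrow S$ is precisely what bridges the two bases, and this is the step your sketch is missing. In short: your proof is correct but quotes a deep theorem as a black box; the paper's proof is shorter, self-contained, and uses only Poincar\'e duality plus a DVR calculation, which is what the surrounding text advertises.
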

\begin{proof}
Say, $X/S$ and $Z/\mathbb{F}$ have relative dimension $m$ and $n$ respectively.
In particular, $c = m - n + 1$.
Consider the commutative diagram
\begin{equation*}
\begin{tikzcd}
Z \arrow[r, hook, "i"] \arrow[d, "p_Z"] & X\arrow[d, "p_X"]\\
\Spec(\mathbb{F}) \arrow[r, hook, "\sigma"] &S.
\end{tikzcd}
\end{equation*}
By Poincar\'{e}-duality for $X/S$ (respectively $Z/\mathbb{F}$) , $\mathbb{R}p_X^!K^\bullet \simeq p_X^\ast K^\bullet(m)[2m]$ (respectively $\mathbb{R}p_Z^!\sigma^\ast K^\bullet $ $\simeq p_Z^\ast\sigma^\ast K^\bullet(n)[2n]$).
Further, by the special case of absolute purity for the closed point in $S$ (which is an easy exercise -- e.g.~the proof of~\cite[Lemma~8.3.6]{Fu11} goes through unchanged for l.c.c.~sheaves and hence for l.c.c.~complexes), $\mathbb{R}\sigma^!K^\bullet \simeq \sigma^\ast K^\bullet[-2]$. 
Summing up, we get
\begin{equation*}
\begin{array}{rcl}
  \mathbb{R}i^!p_X^\ast K^\bullet&
  \simeq &
  \mathbb{R}i^! \mathbb{R}p_X^!K^\bullet(-m)[-2m]\phantom{,}
 \\
  &
  \simeq &
  \mathbb{R}p_Z^! \mathbb{R}\sigma^!K^\bullet(-m)[-2m]\phantom{,}
 \\
  &
  \simeq &
  \mathbb{R}p_Z^! \sigma^\ast K^\bullet(-m -1)[-2m-2]\phantom{,}
 \\
  &
  \simeq &
  p_Z^\ast\sigma^\ast K^\bullet(-c)[-2c],
\end{array}
\end{equation*}
finishing the proof.
\end{proof}

\begin{lemma}\label{lem: effaceability special fibre}
In the situation of \ref{para: situation etale Bloch-Ogus}, assume that $X$ is Henselian local with closed point $x$ in the special fibre of $X/S$.
Then the canonical morphism $\sigma_{X,\ast}\mathbb{R}\sigma_X^!\Lambda \rightarrow \Lambda$ induces the trivial morphism in $\mathcal{D}^b(k(x)_{\rm et},\Lambda)$:
\begin{equation*}
 x^\ast \sigma_{X,\ast}\mathbb{R}\sigma_X^!\Lambda \xrightarrow{\simeq ~0}
 x^\ast \Lambda
 \simeq \Lambda.
\end{equation*}
In particular, the canonical map $\mathbb{R}\Gamma_{X_\sigma}(X_{\rm et},\Lambda) \rightarrow \mathbb{R}\Gamma(X_{\rm et},\Lambda)$ is trivial.
\end{lemma}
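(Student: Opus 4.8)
The plan is to reduce the statement to the triviality of the normal bundle of the special fibre inside $X$. Write $X=\Spec R$ with $R$ henselian local, essentially smooth and hence flat over $\mathfrak{o}$, and fix a uniformizer $\pi\in\mathfrak{o}$; then $X_\sigma=V(\pi)\hookrightarrow X$ is an effective Cartier divisor with principal ideal, it is connected and essentially smooth over $\mathbb{F}$, and has codimension $1$ in $X$, so that Lemma~\ref{lem: absolute purity} applies to it. The closed point $x$ of $X$ lies on $X_\sigma$ and is also its closed point, with residue field $k(x)$; if $\iota\colon\Spec k(x)\hookrightarrow X_\sigma$ denotes the corresponding closed immersion, then $x^\ast=\iota^\ast\sigma_X^\ast$. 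Since $\sigma_X$ is a closed immersion, $\sigma_X^\ast\sigma_{X,\ast}\simeq\id$, so writing $\phi\colon\sigma_{X,\ast}\mathbb{R}\sigma_X^!\Lambda\to\Lambda$ for the canonical morphism of the lemma, we get $x^\ast\phi=\iota^\ast(\sigma_X^\ast\phi)$ with $\sigma_X^\ast\phi\colon\mathbb{R}\sigma_X^!\Lambda\to\sigma_X^\ast\Lambda=\Lambda$ a morphism in $\mathcal{D}^b(X_{\sigma,{\rm et}},\Lambda)$. Hence it suffices to prove that $\sigma_X^\ast\phi=0$.

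By absolute purity (Lemma~\ref{lem: absolute purity} with $Z=X_\sigma$ and $c=1$), $\mathbb{R}\sigma_X^!\Lambda\simeq\Lambda_{X_\sigma}(-1)[-2]$, so $\sigma_X^\ast\phi$ identifies with a class in $\hom_{\mathcal{D}^b(X_{\sigma,{\rm et}})}(\Lambda(-1)[-2],\Lambda)=H^2(X_{\sigma,{\rm et}},\mu_m)$. By the self-intersection formula for the regular closed immersion $\sigma_X$ of codimension $1$, this class is the Chern class $c_1(N_{X_\sigma/X})$ of the normal bundle. Now $N_{X_\sigma/X}=\mathcal{O}_X(X_\sigma)\vert_{X_\sigma}$ is trivial, because $X_\sigma=V(\pi)$ is a principal Cartier divisor (equivalently, because $\Pic(X_\sigma)=0$, $X_\sigma$ being local); therefore $\sigma_X^\ast\phi=c_1(N_{X_\sigma/X})=0$. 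Equivalently, applying $\sigma_X^\ast$ to the localization triangle $\sigma_{X,\ast}\mathbb{R}\sigma_X^!\Lambda\xrightarrow{\phi}\Lambda\to\mathbb{R}j_\ast\Lambda_{X_\eta}$, the triviality of the normal bundle splits the unit $\Lambda_{X_\sigma}\to\sigma_X^\ast\mathbb{R}j_\ast\Lambda_{X_\eta}$ — a splitting being provided by the Kummer class of $\pi\in\mathcal{O}(X_\eta)^\ast$, whose residue along $X_\sigma$ is $1$ — which again forces $\sigma_X^\ast\phi=0$.

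It follows that $x^\ast\phi=\iota^\ast(\sigma_X^\ast\phi)=0$, which is the first assertion. For the last sentence, observe that $\mathbb{R}\Gamma_{X_\sigma}(X_{{\rm et}},\Lambda)=\mathbb{R}\Gamma(X_{{\rm et}},\sigma_{X,\ast}\mathbb{R}\sigma_X^!\Lambda)$ and that the forget support map is $\mathbb{R}\Gamma(X_{{\rm et}},\phi)$. Since $R$ is henselian local with residue field $k(x)$, the classical comparison for henselian local rings gives a natural identification $\mathbb{R}\Gamma(X_{{\rm et}},-)\simeq\mathbb{R}\Gamma(k(x)_{{\rm et}},x^\ast(-))$ on $\mathcal{D}^b_c(X_{{\rm et}},\Lambda)$; applying it to $\phi$ yields $\mathbb{R}\Gamma(X_{{\rm et}},\phi)\simeq\mathbb{R}\Gamma(k(x)_{{\rm et}},x^\ast\phi)=0$.

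The main obstacle is identifying $\sigma_X^\ast\phi$ with the Chern class of the normal bundle, i.e.\ the self-intersection formula in the étale setting; if one wishes to avoid quoting it, the same input can be extracted from the explicit computation of $\sigma_X^\ast\mathbb{R}j_\ast\Lambda_{X_\eta}$ near the smooth divisor $X_\sigma$ together with the splitting produced by the uniformizer. The other ingredients — the base change identities for the closed immersion $\sigma_X$, Lemma~\ref{lem: absolute purity}, and the henselian local cohomology comparison — are routine.
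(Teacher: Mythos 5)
Your proof is correct and runs along essentially the same lines as the paper's. The paper also identifies the forget-support map, after twisting and applying absolute purity (Lemma~\ref{lem: absolute purity}), with the first Chern class of the line bundle $\mathcal{O}(X_\sigma)$ on $X$, and then kills it using $\Pic(X)=0$ since $X$ is local; you phrase the same identification as the self-intersection formula on $X_\sigma$ and kill $c_1(N_{X_\sigma/X})$ because $X_\sigma=V(\pi)$ is a principal Cartier divisor — which is the same triviality, restricted from $X$ to $X_\sigma$. Your reduction to $\sigma_X^\ast\phi=0$ is a harmless (and in fact equivalent, since $X_\sigma$ is henselian local so $\iota^\ast$ is an isomorphism on $H^2$) repackaging of the paper's reduction to $x^\ast\phi=0$, and your treatment of the ``in particular'' statement via the henselian comparison $\mathbb{R}\Gamma(X_{\rm et},-)\simeq\mathbb{R}\Gamma(k(x)_{\rm et},x^\ast(-))$ is exactly the paper's.
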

\begin{proof}
The second claim follows from the first.
Indeed, as $X$ is local Henselian $\mathbb{R}\Gamma(X_{\rm et}, - ) \simeq \mathbb{R}\Gamma(k(x)_{\rm et},x^\ast(-))$.
For the first claim, it is enough to show that the Tate-twist
\begin{equation}\label{eq: forget support map twisted}
x^\ast( \sigma_{X,\ast}\mathbb{R}\sigma_X^!\Lambda(1) \to
 \Lambda(1))
\end{equation}
is trivial in $\mathcal{D}^b(k(x)_{\rm et},\Lambda)$.
By Lemma~\ref{lem: absolute purity}, $\mathbb{R}\sigma_X^!\Lambda(1) \simeq \Lambda[-2]$.
In particular, the sheaf-cohomology of~\eqref{eq: forget support map twisted} in degree $2$ is given by
\begin{equation}\label{eq: forget support map cohomology in degree 2}
 \Lambda = {\rm H}^0(X_{\sigma,{\rm et}},\Lambda) \simeq
  {\rm H}_{X_\sigma}^2(X_{\rm et},\Lambda(1)) \to
  {\rm H}^2(X_{\rm et},\Lambda(1))
 ,~
  1 \mapsto
  \hat{c}_1[\mathcal{O}(X_\sigma)],
\end{equation}
\ie, is trivial as $X$ is a local scheme.
Further,
\begin{equation*}
 x^\ast\sigma_{X,\ast}\mathbb{R}\sigma_X^!\Lambda(1) \simeq x^\ast\sigma_{X,\ast}\Lambda[-2] \simeq \Lambda[-2]
\end{equation*}
which implies
\begin{equation*}
 {\rm Hom}_{\mathcal{D}_c^b(k(x)_{\rm et},\Lambda)}(x^\ast \sigma_{X,\ast}\mathbb{R}\sigma_X^!\Lambda(1),x^\ast \Lambda(1)) \cong
 {\rm H}^2(k(x)_{\rm et},\Lambda(1))
\end{equation*}
and \eqref{eq: forget support map twisted} corresponds to a class contained in the image of \eqref{eq: forget support map cohomology in degree 2} (more precisely, \eqref{eq: forget support map twisted} corresponds to the class $\hat{c}_1[\mathcal{O}(X_\sigma)]$), hence it is trivial.
\end{proof}

\begin{corollary}\label{cor: effaceability special fibre}
In the situation of \ref{para: situation etale Bloch-Ogus}, assume that $X$ is Henselian local with closed point $x$ in the special fibre of $X/S$.
Let $K^\bullet \in \mathcal{D}_c^b(S_{\rm et},\Lambda)$ be a l.c.c.~complex.
Then the canonical morphism $\sigma_{X,\ast}\mathbb{R}\sigma_X^!K^\bullet\vert_X \rightarrow K^\bullet\vert_X$ induces the trivial morphism in $\mathcal{D}^b(k(x)_{\rm et},\Lambda)$:
\begin{equation*}
 x^\ast \sigma_{X,\ast}\mathbb{R}\sigma_X^!K^\bullet\vert_X \xrightarrow{\simeq ~0} 
 x^\ast K^\bullet\vert_X.
\end{equation*}
In particular, the canonical map $\mathbb{R}\Gamma_{X_\sigma}(X_{\rm et},K^\bullet\vert_X) \rightarrow \mathbb{R}\Gamma(X_{\rm et},K^\bullet\vert_X)$ is trivial.
\end{corollary}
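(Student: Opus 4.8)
The plan is to reduce the statement to the constant--coefficient case, which is precisely Lemma~\ref{lem: effaceability special fibre}. One should be a little careful here: since $\Lambda=\mathbb{Z}/m$ need not be squarefree, an l.c.c.\ complex $K^\bullet$ need not be dualizable in $\mathcal{D}_c^b(X_{\rm et},\Lambda)$, so the naive projection formula for $\mathbb{R}\sigma_X^!$ is not available. Instead I would carry out the reduction through absolute purity, which by Lemma~\ref{lem: absolute purity} is already at our disposal for arbitrary l.c.c.\ complexes, together with the \emph{unconditional} projection formula for the closed immersion $\sigma_X$.

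Concretely, first note that $X_\sigma$ is Henselian local --- hence connected --- with closed point $x$, is (essentially) smooth over $\mathbb{F}$, and is of codimension $1$ in $X$; so Lemma~\ref{lem: absolute purity} provides a natural isomorphism $\mathbb{R}\sigma_X^!(K^\bullet\vert_X)\simeq K^\bullet\vert_{X_\sigma}(-1)[-2]$, and, taking $K^\bullet=\Lambda$, also $\mathbb{R}\sigma_X^!\Lambda\simeq\Lambda_{X_\sigma}(-1)[-2]$. Combining this with the projection formula $\sigma_{X,\ast}\bigl(A\otimes^{\mathbb{L}}\sigma_X^\ast B\bigr)\simeq\sigma_{X,\ast}A\otimes^{\mathbb{L}}B$ (valid for the closed immersion $\sigma_X$ and \emph{any} $B$), applied with $A=\Lambda_{X_\sigma}(-1)[-2]$ and $B=K^\bullet\vert_X$, yields
\[
 \sigma_{X,\ast}\mathbb{R}\sigma_X^!(K^\bullet\vert_X)\;\simeq\;\bigl(\sigma_{X,\ast}\mathbb{R}\sigma_X^!\Lambda\bigr)\otimes^{\mathbb{L}}_{\Lambda}K^\bullet\vert_X .
\]
I would then verify, using the naturality of the purity isomorphism and of the projection formula and the fact that the forget support map is the counit $\sigma_{X,\ast}\mathbb{R}\sigma_X^!\to\id$, that under this identification the forget support map for $K^\bullet\vert_X$ becomes $\bigl(\sigma_{X,\ast}\mathbb{R}\sigma_X^!\Lambda\to\Lambda\bigr)\otimes^{\mathbb{L}}_{\Lambda}\id_{K^\bullet\vert_X}$. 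Applying the symmetric monoidal functor $x^\ast$ and invoking Lemma~\ref{lem: effaceability special fibre} --- which says exactly that $x^\ast(\sigma_{X,\ast}\mathbb{R}\sigma_X^!\Lambda\to\Lambda)$ is the zero morphism in $\mathcal{D}_c^b(k(x)_{\rm et},\Lambda)$ --- we conclude that $x^\ast$ of the forget support map is zero, since tensoring a zero morphism with an identity is zero; this is the first assertion. For the ``in particular'', since $X$ is Henselian local one has $\mathbb{R}\Gamma(X_{\rm et},-)\simeq\mathbb{R}\Gamma(k(x)_{\rm et},x^\ast(-))$ (as in the proof of Lemma~\ref{lem: effaceability special fibre}), so applying $\mathbb{R}\Gamma(X_{\rm et},-)$ to the forget support map realizes $\mathbb{R}\Gamma_{X_\sigma}(X_{\rm et},K^\bullet\vert_X)\to\mathbb{R}\Gamma(X_{\rm et},K^\bullet\vert_X)$ as $\mathbb{R}\Gamma(k(x)_{\rm et},-)$ applied to the zero morphism, hence it vanishes.

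The main obstacle I expect is the compatibility step: checking that the identification above genuinely intertwines the two forget support maps, rather than merely being an isomorphism of objects. This amounts to a diagram chase involving the counit of $\mathbb{R}\sigma_X^!\dashv\sigma_{X,\ast}$ and the coherence isomorphisms for the projection formula and for purity; it is routine but somewhat tedious. Should it prove unpleasant, an alternative is to rerun the cycle--class computation from the proof of Lemma~\ref{lem: effaceability special fibre} directly: via purity the forget support map corresponds, after $x^\ast$, to a morphism in ${\rm Hom}_{\mathcal{D}_c^b(k(x)_{\rm et},\Lambda)}\bigl(x^\ast K^\bullet\vert_X(-1)[-2],\,x^\ast K^\bullet\vert_X\bigr)$ which is cup product with the cycle class $\hat{c}_1[\mathcal{O}(X_\sigma)]$, and this class is trivial because $X$ is local.
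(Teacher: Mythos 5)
Your proposal is correct and follows the paper's own argument: reduce to the constant-coefficient case by combining Lemma~\ref{lem: absolute purity} with the projection formula for the closed immersion $\sigma_X$ to identify the forget support map for $K^\bullet\vert_X$ with $(\sigma_{X,\ast}\mathbb{R}\sigma_X^!\Lambda\to\Lambda)\otimes^{\mathbb{L}}p_X^\ast K^\bullet$, then apply Lemma~\ref{lem: effaceability special fibre}. Your extra care about the non-dualizability of $K^\bullet$ and about checking that the identification actually intertwines the two forget support maps (the counit diagrams) makes explicit a compatibility the paper passes over silently.
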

\begin{proof}
By Lemma \ref{lem: effaceability special fibre}, $x^\ast$ applied to
\begin{equation}\label{eq: forget support map tensored}
 (
  \sigma_{X,\ast}\mathbb{R}\sigma_X^!\Lambda \to
  \Lambda
 )\otimes^{\mathbb{L}} p_X^\ast K^\bullet
\end{equation}
is trivial.
By the projection formula and Lemma \ref{lem: absolute purity}, \eqref{eq: forget support map tensored} is isomorphic to the canonical morphism $\sigma_{X,\ast}\mathbb{R}\sigma_X^! p_X^\ast K^\bullet \rightarrow p_X^\ast K^\bullet$, so the claim follows.
\end{proof}

Combining Theorem~\ref{thm: Bloch-Ogus abstract nonsense} and Corollary~\ref{cor: effaceability special fibre}, we get:

\begin{theorem}\label{thm: Bloch-Ogus etale cohomology}
Let $S$ be the spectrum of a Henselian discrete valuation ring with infinite residue field $\mathbb{F}$.
Let $X/S$ be smooth, $d = {\rm dim}(X)$ and $K^\bullet$ an l.c.c.~complex in $\mathcal{D}_c^b(S_{\rm et},\Lambda)$.
Then the Nisnevich Gersten complex $\mathcal{G}^\bullet(E(K^\bullet),n)$ is a flasque resolution of the Nisnevich sheafification $\mathbb{R}^n\varepsilon_*K^\bullet\vert_X$ of \'{e}tale cohomology with coefficients~$K^\bullet$.
In particular, we get the exact sequence
\begin{multline*}
   0 \rightarrow \mathbb{R}^n\varepsilon_\ast K^\bullet\vert_X \rightarrow \bigoplus_{z\in X^{(0)}}z_\ast {\rm H}^{n}(k(z),K^\bullet\vert_{k(z)})\rightarrow\dots\\
  \dots \rightarrow \bigoplus_{z\in X^{(d)}}z_\ast {\rm H}^{n-d}(k(z),K^\bullet\vert_{k(z)}(-d)) \rightarrow 0.
  \end{multline*}
\end{theorem}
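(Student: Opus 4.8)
The plan is to combine Theorem~\ref{thm: Bloch-Ogus abstract nonsense} with Corollary~\ref{cor: effaceability special fibre} to see that $\mathcal{G}^\bullet(E(K^\bullet),n)$ is a flasque resolution of $\mathbb{R}^n\varepsilon_\ast K^\bullet\vert_X$, and then to identify its terms with the stated \'etale cohomology groups; this last step, which rests on absolute purity, is the one I expect to require the most care.

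First I would note that $E(K^\bullet)$ is an $\AA^1$-Nisnevich local fibrant spectrum: it is Nisnevich local fibrant by construction (see~\ref{para: Eilenberg-MacLane spectrum}) and $\AA^1$-local by Lemma~\ref{lem: A1 local}. To apply the ``exact everywhere'' part of Theorem~\ref{thm: Bloch-Ogus abstract nonsense} I must check that the forget support map $E(K^\bullet)_{X_\sigma/X}(X_x^h)\to E(K^\bullet)_X(X_x^h)$ is trivial for every point $x\in X$. If $x$ lies in the generic fibre this is automatic, since then $X_\sigma\times_X X_x^h=\emptyset$ and the source is contractible. If $x$ lies in the special fibre, then $X_\sigma\times_X X_x^h$ is the special fibre of the Henselian local scheme $X_x^h/S$, with closed point $x$; using the continuity of \'etale cohomology along the cofiltered system of Nisnevich neighbourhoods of $x$ (cf.~Lemma~\ref{basechangeonhomotopygroups}, Lemma~\ref{easyidentification}) together with $\mathbb{R}\Gamma(-_{\rm et},-)\simeq\mathbb{R}\Gamma(-_{\rm Nis},\mathbb{R}\varepsilon_\ast-)$, this map unravels (as in~\ref{para: what to do 1}) to the forget support map $\mathbb{R}\Gamma_{X_{x,\sigma}^h}(X_{x,{\rm et}}^h,K^\bullet\vert_{X_x^h})\to\mathbb{R}\Gamma(X_{x,{\rm et}}^h,K^\bullet\vert_{X_x^h})$ in \'etale cohomology, which vanishes by Corollary~\ref{cor: effaceability special fibre} applied to $X_x^h$. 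Hence Theorem~\ref{thm: Bloch-Ogus abstract nonsense} shows that $\mathcal{G}^\bullet(E(K^\bullet),n)$ is a resolution of $(E(K^\bullet)_X^n)^\sim$ by flabby, equivalently flasque, Nisnevich sheaves; and $(E(K^\bullet)_X^n)^\sim=\mathbb{R}^n\varepsilon_\ast K^\bullet\vert_X$ since the presheaf $V\mapsto H^n(V_{\rm et},K^\bullet\vert_V)$ sheafifies to the $n$-th higher direct image.

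It remains to identify the term $\mathcal{G}^s(E(K^\bullet),n)=\bigoplus_{z\in X^{(s)}}\mathfrak{z}_\ast\mathfrak{z}^\ast E(K^\bullet)^{n+s}_{Z/X}$ with $\bigoplus_{z\in X^{(s)}}z_\ast H^{n-s}(k(z),K^\bullet\vert_{k(z)}(-s))$, and this is the main obstacle. As in the proof of Proposition~\ref{proposition:flasque}, the $z$-th summand is the skyscraper sheaf at $z$ whose stalk is the local \'etale cohomology $H^{n+s}_{\{z\}}(X_{z,{\rm et}}^{\rm loc},K^\bullet\vert_{X_z^{\rm loc}})$ of the regular local scheme $X_z^{\rm loc}=\Spec(\mathcal{O}_{X,z})$ with support in its closed point. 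Writing $i_z\colon\Spec(k(z))\hookrightarrow X_z^{\rm loc}$ for that closed immersion, this group is $H^{n+s}(k(z)_{\rm et},\mathbb{R}i_z^!K^\bullet\vert_{X_z^{\rm loc}})$, and I would invoke Gabber's absolute purity theorem (both schemes are regular, $m$ is invertible on $X$, and $\codim(\overline{\{z\}},X)=s$) to get $\mathbb{R}i_z^!\Lambda\simeq\Lambda(-s)[-2s]$; the projection formula then yields $\mathbb{R}i_z^!K^\bullet\vert_{X_z^{\rm loc}}\simeq K^\bullet\vert_{k(z)}(-s)[-2s]$, so the stalk is $H^{n-s}(k(z),K^\bullet\vert_{k(z)}(-s))$ as desired. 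The genuine point here is that, unlike the equal-characteristic situation of~\cite{CTHK97}, one cannot extract this from elementary cohomology of local rings of smooth varieties but must use absolute purity -- save when the residue and quotient fields of $\mathfrak{o}$ are perfect, where the more hands-on argument of Remark~\ref{remark:avoidpurity} applies. Feeding these identifications into the exact sequence established above finishes the proof.
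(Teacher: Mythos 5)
Your proposal is correct and follows essentially the same route as the paper's own proof: apply Theorem~\ref{thm: Bloch-Ogus abstract nonsense} together with Corollary~\ref{cor: effaceability special fibre} to obtain the flasque resolution, then identify the terms using Gabber's absolute purity. You spell out a few steps the paper leaves implicit (the vanishing at points of the generic fibre, the continuity along Nisnevich neighbourhoods, the sheaf identification $(E(K^\bullet)_X^n)^\sim=\mathbb{R}^n\varepsilon_\ast K^\bullet\vert_X$), but the argument is the same.
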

\begin{proof}
The spectrum $E(K^\bullet)$ is $\AA^1$-local by Lemma \ref{lem: A1 local}.
Combining Theorem~\ref{thm: Bloch-Ogus abstract nonsense} and Corollary~\ref{cor: effaceability special fibre}, we get that $\mathcal{G}^\bullet(E(K^\bullet),n)$ is a flasque resolution of $\mathbb{R}^n\varepsilon_*K^\bullet\vert_X$.
\\
Let us compute $\mathcal{G}^s(E(K^\bullet),n) = \bigoplus_{z\in X^{(s)}}\mathfrak{z}_\ast\mathfrak{z}^\ast E(K^\bullet)_{Z/X}^{n+s}$:
In the proof of Proposition~\ref{proposition:flasque} we saw that $\mathfrak{z}^\ast E(K^\bullet)_{Z/X}^{n+s} = z_\ast z^\ast E(K^\bullet)_{z/X_z^{\rm loc}}^{n+s}$.
Unravelling the definitions, $z^\ast E(K^\bullet)_{z/X_z^{\rm loc}}^{n+s} \cong {\rm H}_z^{n+s}(X_{z,{\rm et}}^{\rm loc},K^\bullet)$.
By absolute purity, ${\rm H}_z^{n+s}(X_{z,{\rm et}}^{\rm loc},K^\bullet) \cong {\rm H}^{n-s}(k(z),K^\bullet(-s))$, which finishes the proof.
\end{proof}

\begin{remark}\label{remark:avoidpurity}
We can avoid absolute purity in its full strength if we assume $k$ and $\mathbb{F}$ to be perfect:
Computing $\mathfrak{z}^\ast E(K^\bullet)_{Z/X}^{n+s}$ under this assumption, we may assume $Z$ to be smooth over $k$ (if $z$ is contained in the generic fibre of $X/S$) or smooth over $\mathbb{F}$ (if $z$ is contained in the special fibre of $X/S$) by generic smoothness.
In both cases, $\mathfrak{z}^\ast E(K^\bullet)_{Z/X}^{n+s} \cong z_\ast H^{n-s}(k(z),K^\bullet(-s))$, either by relative purity or by Lemma~\ref{lem: absolute purity}.
\end{remark}

Taking Nisnevich stalks, we get:

\begin{corollary}\label{cor: Bloch Ogus for etale cohomology of Nisnevich local schemes}
Let $S$ be the spectrum of a Henselian discrete valuation ring with infinite residue field $\mathbb{F}$.
Let $X/S$ be smooth of finite type, $d = {\rm dim}(X)$ and $K^\bullet$ an l.c.c.~complex in $\mathcal{D}_c^b(S_{\rm et},\Lambda)$.
Let $x$ be a point of $X$ and $Y = X_x^{h}$ the Nisnevich local scheme at $x$.
Then there is an exact sequence
\begin{multline*}
 0 \to {\rm H}^n(Y_{{\rm et}},K^\bullet\vert_Y) \xrightarrow{e} \bigoplus_{z \in Y^{(0)}} {\rm H}^n(k(z),z^\ast K^\bullet\vert_Y) \xrightarrow{d^{0}} \cdots\\
 \cdots \xrightarrow{d^{d-1}} \bigoplus_{z \in Y^{(d)}} {\rm H}^{n-d}(k(z),z^\ast K^\bullet\vert_Y(-d)) \to 0 .
\end{multline*}
\end{corollary}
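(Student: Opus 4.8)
The plan is to derive the corollary by taking the Nisnevich stalk at $x$ of the flabby resolution provided by Theorem~\ref{thm: Bloch-Ogus etale cohomology} and then identifying the stalks of the sheaves occurring in it.

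First, Theorem~\ref{thm: Bloch-Ogus etale cohomology} gives an exact complex of Nisnevich sheaves on $X_\Nis$
\[
0\to \mathbb{R}^n\varepsilon_*K^\bullet\vert_X \to \mathcal{G}^0(E(K^\bullet),n)\to\cdots\to\mathcal{G}^d(E(K^\bullet),n)\to 0 .
\]
Exactness of a complex of sheaves is tested on stalks, and the Nisnevich stalk of a sheaf $\mathcal{F}$ on $X_\Nis$ at $x$ is $\colim_{(W,w)}\mathcal{F}(W)=\mathcal{F}(Y)$, the colimit over the Nisnevich neighbourhoods $(W,w)$ of $x$ whose cofiltered limit is $Y=X_x^h$. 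So passing to stalks at $x$ produces an exact sequence of abelian groups, and only the identification of its terms remains. For the first term, $\mathbb{R}^n\varepsilon_*K^\bullet\vert_X$ is the Nisnevich sheafification of $W\mapsto {\rm H}^n(W_{\rm et},K^\bullet)$ and sheafification preserves stalks, so its stalk at $x$ is $\colim_{(W,w)}{\rm H}^n(W_{\rm et},K^\bullet)\cong{\rm H}^n(Y_{\rm et},K^\bullet\vert_Y)$ by continuity of \'etale cohomology along the cofiltered limit $Y=\lim W$ of affine schemes with affine transition maps and bounded constructible coefficients.

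It remains to identify the stalk of $\mathcal{G}^s(E(K^\bullet),n)=\bigoplus_{z\in X^{(s)}}\mathfrak{z}_*\mathfrak{z}^*E(K^\bullet)^{n+s}_{Z/X}$ at $x$, which is the substantial point. By the proof of Proposition~\ref{proposition:flasque}, $\mathfrak{z}^*E(K^\bullet)^{n+s}_{Z/X}$ is the skyscraper sheaf at the closed point $z$ of $X_z^{\rm loc}=\Spec(\mathcal{O}_{X,z})$ with value ${\rm H}^{n+s}_z(X_{z,{\rm et}}^{\rm loc},K^\bullet)\cong{\rm H}^{n-s}(k(z),K^\bullet\vert_{k(z)}(-s))$ (absolute purity, as in the proof of Theorem~\ref{thm: Bloch-Ogus etale cohomology}; \cf~Remark~\ref{remark:avoidpurity}). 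Taking the stalk at $x$, the $z$-th summand contributes nothing unless $x\in\overline{\{z\}}$, \ie, unless $z$ lies in the image of $Y\to X$, since otherwise $\Spec(k(z))\times_X W$ is empty for small enough $W$; and for such $z$ the stalk of $\mathfrak{z}_*\mathfrak{z}^*E(K^\bullet)^{n+s}_{Z/X}$ at $x$ is $\bigoplus_{z'\mapsto z}{\rm H}^{n-s}(k(z'),{z'}^*K^\bullet\vert_Y(-s))$, the sum over the finitely many points $z'$ of $Y$ above $z$ (finiteness because $\mathcal{O}_{Y,x}\otimes_{\mathcal{O}_{X,x}}k(z)$ is Artinian), each summand being computed by absolute purity over $Y$ (equivalently, from the skyscraper value by continuity of Galois cohomology along the algebraic extension $k(z')/k(z)$). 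Since $\mathcal{O}_{Y,z'}$ is a filtered colimit of local, essentially \'etale $\mathcal{O}_{X,z}$-algebras, $\dim\mathcal{O}_{Y,z'}=\dim\mathcal{O}_{X,z}$, hence $z'\in Y^{(s)}$ if and only if $z\in X^{(s)}$; reindexing the resulting double sum over $Y^{(s)}$ identifies the stalk of $\mathcal{G}^s(E(K^\bullet),n)$ at $x$ with $\bigoplus_{z\in Y^{(s)}}{\rm H}^{n-s}(k(z),z^*K^\bullet\vert_Y(-s))$, and the differentials transport along by naturality. I expect this last bookkeeping to be the main obstacle: one must control the behaviour of points and their codimensions under the henselization $Y\to X$ and check that $\mathfrak{z}_*$ interacts with the stalk so as to yield exactly the direct sum of Galois cohomology groups of residue fields of $Y$. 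A clean way to organize it is to observe, via Lemma~\ref{easyidentification} and passage to the cofiltered limit, that the whole construction of the Gersten complex is compatible with the pro-(finite-type \'etale) morphism $Y\to X$, so that the stalk at $x$ is just the complex of global sections over $Y$ of the analogous Gersten complex built from $K^\bullet\vert_Y$.
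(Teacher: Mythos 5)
Your proposal is correct and takes exactly the same route as the paper, whose entire proof of the corollary is the single phrase ``Taking Nisnevich stalks, we get:'' preceding the statement. You usefully spell out the bookkeeping the paper elides, namely how the direct sums over $X^{(s)}$ regroup into direct sums over $Y^{(s)}$ using the finiteness of the fibres of the pro-\'etale morphism $Y\to X$ and the equality $\dim\mathcal{O}_{Y,z'}=\dim\mathcal{O}_{X,z}$, and your closing reformulation --- that the stalk at $x$ of the Gersten complex on $X$ is the Gersten complex built directly over $Y$, by Lemma~\ref{easyidentification} and passage to the limit --- is the cleanest way to see it.
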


\begin{remark}\label{remark:geisser}
Using the Bloch--Kato-Conjecture, in \cite{Geisser04} Geisser proved the exactness of the Gersten complex in degree $n$ for $X/S$ smooth even for $\varepsilon\colon X_{\rm et}\rightarrow X_{\rm Zar}$, but only for coefficients $K^\bullet = \Lambda(r) = \boldsymbol{\mu}_{m}^{\otimes r}$ for $n \leq r$.
If $S$ is not strictly Henselian, this assumption excludes ${\rm H}^{2r}(Y_{\rm et},\Lambda(r))$ for $r>0$, \ie, the targets of the cycle class maps.
\end{remark}


\begin{thebibliography}{20}


\bibitem{BlochGersten}
S.~Bloch, {\it A note on {G}ersten's conjecture in the mixed characteristic
              case},
Applications of algebraic {$K$}-theory to algebraic geometry
              and number theory, {P}art {I}, {II} ({B}oulder, {C}olo.,
              1983),
75--78, Contemp. Math. 55, Amer. Math. Soc., Providence, RI, 1986.

\bibitem{BlochOgus}
S.~Bloch and A.~Ogus, {\it Gersten's conjecture and the homology of schemes},
Ann. Sci. \'{E}cole Norm. Sup. (4) 7 (1974), 181--201.

\bibitem{CTHK97}
J.-L.~Colliot-Th{\'e}l{\`e}ne, R.T.~Hoobler, and B. Kahn, {\it The {B}loch-{O}gus-{G}abber theorem},
Algebraic {$K$}-theory ({T}oronto, {ON}, 1996), Vol. 16, Fields Inst. Commun.
Amer. Math. Soc., Providence, RI, 1997.

\bibitem{Fu11}
L.~Fu, {\it  Etale Chomology Theory},
Nankai Tracts in Mathematics, Vol. 13, 
World Scientific Publishing Co. Pte. Ltd., Hackensack, NJ, 2011.

\bibitem{Geisser04}
T.~Geisser, {\it Motivic cohomology over Dedekind rings},
Math. Z. (4) 248 (2004), 773--794.

\bibitem{GilletLevine}
H.~Gillet and M.~Levine, {\it The relative form of {G}ersten's conjecture over a discrete
              valuation ring: the smooth case},
J. Pure Appl. Algebra (1) 46 (1987), 59--71.

%

\bibitem{Quillengersten}
D.~Quillen, {\it Higher algebraic {$K$}-theory. {I}},
Algebraic {$K$}-theory, {I}: {H}igher {$K$}-theories ({P}roc.
              {C}onf., {B}attelle {M}emorial {I}nst., {S}eattle, {W}ash.,
              1972),
85--147. Lecture Notes in Math., Vol. 341, Springer, Berlin, 1973.          

\bibitem{SS16}
J.~Schmidt and F.~Strunk, {\it Stable {$\mathbb{A}^1$}-connectivity over {D}edekind schemes},
Ann. K-Theory (2) 3 (2018), 331--367.


\bibitem{Voevodsky07}
V.~Voevodsky, O.~R\"ondigs, and P.A.~\O stv\ae r, {\it Voevodsky's Nordfjordeid Lectures: Motivic Homotopy},
Motivic Homotopy Theory, Lectures at a Summer School in Nordfjordeid, Norway, August 2002, Springer Berlin Heidelberg New York, 2007.

\end{thebibliography}
\end{document}